\newtheorem{theorem}{Theorem}[section]
\newtheorem{lemma}[theorem]{Lemma}
\theoremstyle{definition}
\newtheorem{definition}[theorem]{Definition}
\newtheorem*{question}{Question}
\theoremstyle{remark}
\newtheorem*{remark}{Remark}
\numberwithin{equation}{section}
\begin{document}

\title{Counting points on surfaces in polynomial time}

\author{Nitin Saxena \orcidlink{0000-0001-6931-898X}}
\address{Department of Computer Science \& Engineering, IIT Kanpur, India} 
%\curraddr{\url{https://www.cse.iitk.ac.in/users/nitin/}}
\email{nitin@cse.iitk.ac.in}
\author{Madhavan Venkatesh}
\address{Department of Computer Science \& Engineering, IIT Kanpur, India} 
\email{madhavan@cse.iitk.ac.in}
 \thanks{}

\date{}
\vspace{1cm}
\dedicatory{To the memory of Sebastiaan Johan Edixhoven.}

\vspace{2cm}
\begin{abstract}

 We present a randomised algorithm to compute the local zeta function 
 of a fixed smooth, projective surface over $\mathbb{Q}$, at any large prime $p$ of good reduction. The runtime of our algorithm is polynomial in $\log p$, resolving a conjecture of Couveignes and Edixhoven. %The main ingredient is an analytic, mixed characteristic method to  identify vanishing cycles uniformly, employing the convergence bound of the Puiseux series, and the Picard-Lefschetz formula for the monodromy action on it. 
 
 %Mixed-characteristic=p-adic, which our algo doesn't explicitly use...

 %The earlier algorithmic results were restricted to curves (e.g., ~elliptic curves and other fixed-degree hyperelliptic curves used in computer science); whose methods do not generalize to even basic surfaces, e.g., ~$K3$ surfaces or simply, a hypersurface in $\mathbb{P}^{3}$ like the Fermat quartic $X^4+Y^4+Z^4+W^4=0$ (Weil, 1948). We make the first algorithmic progress by providing an explicit method to study the second \'etale cohomology of varieties over finite fields.
\end{abstract}

\maketitle
\vspace{1cm}
\setcounter{tocdepth}{1}
\tableofcontents

\newpage
\section{Introduction}
\subsection{Main result}
Let $X\subset \mathbb{P}^{N}$ be a fixed smooth, projective, geometrically integral (properties we abbreviate to \textit{nice}) surface of degree $D$ over a finite field $\mathbb{F}_{q}$, described by a system of homogeneous polynomial equations $f_{1}, \ldots, f_{m}$ each of degree $\leq d$. We assume $X$ is obtained via good reduction of a nice surface $\mathcal{X}$ over a number field $K$ at a prime $\mathfrak{p}\subset \mathcal{O}_{K}$. The zeta function of $X$ is
$$
Z(X/\mathbb{F}_{q}, T):=\exp\left(\sum_{j=1}^{\infty}\#X(\mathbb{F}_{q^{j}})\frac{T^{j}}{j}\right).
$$
Fix a prime $\ell$ coprime to $q$. From the Weil conjectures for $X$, we know that 
$$
Z(X/\mathbb{F}_{q}, T)=\frac{P_{1}(X/\mathbb{F}_{q}, T)P_{3}(X/\mathbb{F}_{q}, T)}{(1-T)P_{2}(X/ \mathbb{F}_{q}, T)(1-q^{2}T)},
$$
where $P_{i}(X/\mathbb{F}_{q}, T):=\det\left(1-TF_{q}^{\star} \ \vert \ \mathrm{H}^{i}(X, \mathbb{Q}_{\ell})\right)$ is the (reversed) characteristic polynomial of the geometric Frobenius acting on the $i^{\text{th}}$ \ $\ell $ -- adic \'etale cohomology group of $X$. In \cite[Epilogue]{ceramanujan}, the existence of an algorithm that computes the point count $\#X(\mathbb{F}_{q})$ in time polynomial in $\log q$ is conjectured. We prove this conjecture by exhibiting an algorithm that computes the action of Frobenius on the \'etale cohomology groups with torsion coefficients $\mathrm{H}^{i}(X, \mathbb{Z}/\ell \mathbb{Z})$ \footnote{we abuse notation by referring to the base change of $X$ to $X\times_{{\mathbb{F}_{q}}}\overline{\mathbb{F}}_{q}$, also as $X$} for primes $\ell= O(\log q)$, from which the zeta function of $X$, and thereby its point count can be recovered by a Chinese-remainder process. Our main result is the following.
\begin{theorem}
    There exists an algorithm that, on input $X$ as above, outputs $Z(X/\mathbb{F}_{q}, T)$ in time bounded by a polynomial in $\log q$.
\end{theorem}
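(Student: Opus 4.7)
\emph{Proof outline (proposal).} The plan is to follow the Schoof--Pila paradigm developed for elliptic curves, abelian varieties, and curves of fixed genus, and extend it to dimension two. Each $P_{i}(X/\mathbb{F}_{q}, T) \in \mathbb{Z}[T]$ has integer coefficients whose magnitude is bounded by the Weil estimates in terms of $q$ and the Betti numbers (which are themselves bounded in terms of $D$), so it suffices to compute the reductions $P_{i}(X/\mathbb{F}_{q}, T) \bmod \ell$ for sufficiently many small primes $\ell = O(\log q)$ coprime to $q$, and then reconstruct $P_{i}$ by the Chinese Remainder Theorem. For each such $\ell$, $P_{i} \bmod \ell$ is the characteristic polynomial of geometric Frobenius $F_{q}^{\star}$ acting on the finite group $\mathrm{H}^{i}(X, \mathbb{Z}/\ell \mathbb{Z})$, so the task reduces to computing this action explicitly.

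The outer groups $\mathrm{H}^{0}$ and $\mathrm{H}^{4}$ are one-dimensional with Frobenius acting by $1$ and $q^{2}$, giving the denominator factors. By Poincar\'e duality, $P_{3}$ is determined by $P_{1}$. The cohomology $\mathrm{H}^{1}(X, \mathbb{Z}/\ell \mathbb{Z})$ is canonically dual to the $\ell$-torsion of the Picard variety $\mathrm{Pic}^{0}_{X/\mathbb{F}_{q}}$, an abelian variety whose dimension is bounded in terms of $D$. For this piece one may therefore invoke a Pila-style algorithm on $\mathrm{Pic}^{0}$, or compute $\ell$-division ideals directly, to extract the Frobenius action in time polynomial in $\log q$.

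The essentially new difficulty is the middle cohomology $\mathrm{H}^{2}(X, \mathbb{Z}/\ell \mathbb{Z})$, which has a transcendental component not captured by any auxiliary abelian variety. My approach is to fix a Lefschetz pencil $\pi \colon \widetilde{X} \to \mathbb{P}^{1}$, where $\widetilde{X}$ is the blow-up of $X$ at the base locus, and to use the Leray spectral sequence to reconstruct $\mathrm{H}^{2}(\widetilde{X}, \mathbb{Z}/\ell \mathbb{Z})$ from $\mathrm{H}^{a}(\mathbb{P}^{1}, R^{b}\pi_{*} \mathbb{Z}/\ell \mathbb{Z})$. The higher direct image $R^{1}\pi_{*} \mathbb{Z}/\ell \mathbb{Z}$ is a constructible sheaf whose stalks away from the finitely many critical values are the first cohomology of the smooth curve fibers. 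The entire computation then reduces to (i) computing $\ell$-torsion of the Jacobian of a curve of bounded genus over the function field $\mathbb{F}_{q}(t)$, adapting Pila's algorithm; (ii) recording the monodromy data (Picard--Lefschetz formula) at the critical values; and (iii) correcting for the exceptional divisors of the blow-up, which contribute to $\mathrm{NS}(X)$ and hence to $\mathrm{H}^{2}$ in a manner governed by the Kummer sequence
\[
0 \longrightarrow \mathrm{NS}(X)/\ell \longrightarrow \mathrm{H}^{2}(X, \mu_{\ell}) \longrightarrow \mathrm{Br}(X)[\ell] \longrightarrow 0.
\]

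The main obstacle I anticipate is the absence of a ready-made concrete handle on $\mathrm{H}^{2}$ classes for surfaces, analogous to division polynomials for elliptic curves or Mumford coordinates for higher genus Jacobians. Here the characteristic-zero model $\mathcal{X}/K$ should play a crucial role: one precomputes the Lefschetz pencil, a basis of $\mathrm{NS}(\mathcal{X})$, and the monodromy data over $K$ with height controlled independently of $p$, and then reduces modulo $\mathfrak{p}$ to transport these to $X/\mathbb{F}_{q}$. Combining this precomputation with standard height and degree bounds on the auxiliary algebraic extensions, and with the Schoof--Pila algorithm on $\mathrm{Pic}^{0}$ and on the fiber Jacobians, should yield a total runtime polynomial in $\log q$ at any sufficiently large prime $p$ of good reduction.
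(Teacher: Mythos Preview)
Your outline correctly sets up the Schoof--Pila Chinese-remainder framework, the Lefschetz pencil, and the Leray spectral sequence; this matches the paper's scaffolding. The genuine gap is in your handling of the ``essentially new difficulty'', namely the Frobenius action on $\mathrm{H}^{1}(\mathbb{P}^{1}, R^{1}\pi_{*}\mu_{\ell})$. Steps (i) and (ii) of your plan yield the stalk $\mathcal{F}_{\overline{\eta}}$ together with the monodromy, and this does give the abstract $\mathbb{F}_{\ell}$-vector space $\mathrm{H}^{1}(\mathbb{P}^{1},\mathcal{F})$ via the Picard--Lefschetz complex. But it does \emph{not} give the arithmetic Frobenius action on that space: the complex $\mathcal{F}_{\overline{\eta}}\to (\mathbb{Z}/\ell\mathbb{Z})^{r}\to \mathcal{F}_{\overline{\eta}}$ is a device of the geometric fundamental group, and there is no obvious way to read off the $\mathrm{Gal}(\overline{K}/K)$-action on its middle cohomology from Pila-on-the-generic-fibre alone. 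Your step (iii) invoking the Kummer sequence and $\mathrm{NS}$ does not help: computing a basis of $\mathrm{NS}(\mathcal{X})$ is itself not known to be effective, and even granting it, the sequence merely pushes the problem onto $\mathrm{Br}(X)[\ell]$, for which you have no handle at all. This is precisely the obstruction the paper names in the introduction as what made Levrat's implementation exponential.

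The paper's resolution is a substantial idea you are missing. One passes to the \'etale cover $\tilde{\mathcal{V}}\to\mathbb{P}^{1}$ trivialising $\mathcal{F}|_{\mathcal{U}}$; via Hochschild--Serre one gets a Galois-equivariant isomorphism $\mathrm{H}^{1}(\mathbb{P}^{1},\mathcal{F})\simeq\bigl(\mathrm{H}^{1}(\tilde{\mathcal{V}},\mu_{\ell})\otimes M\bigr)^{G}$ with $M=\mathcal{E}_{\overline{\eta}}$ and $G$ the monodromy image. The curve $\tilde{\mathcal{V}}$ has genus polynomial in $\ell$, so computing all of $\mathrm{Jac}(\tilde{\mathcal{V}})[\ell]$ is hopeless; the key observation is that the \emph{Edixhoven subspace} $\mathbb{E}\subset\mathrm{Jac}(\tilde{\mathcal{V}})[\ell]$, spanned by images of $G$-equivariant maps $M^{\vee}\to\mathrm{H}^{1}(\tilde{\mathcal{V}},\mu_{\ell})$, has dimension bounded independently of $\ell$. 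One then isolates $\mathbb{E}$ modulo a small auxiliary prime $\mathfrak{P}$ (where the $\mathfrak{P}$-adic point-counting of Lauder--Wan/Harvey applies), Hensel-lifts it \`a la Mascot, and controls the required precision via Arakelov-theoretic height bounds (Javanpeykar, Wilms, de Jong). None of this machinery is visible in your proposal, and without it the runtime is exponential in $\ell$, hence in $\log q$.
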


\begin{remark}
    This theorem is restated in more detail as Theorem~\ref{thm:main} in Section~\ref{subsec:main} and proved therein. We in fact give an algorithm to compute the \'etale cohomology groups $\mathrm{H}^{i}(\mathcal{X}, \mu_\ell)$ with $\mathrm{Gal}(\overline{K}/K)$ - action in time polynomial in $\ell$, from which, for an input prime $\mathfrak{p}$, the local zeta function and point counts follow in polynomial time.
    
    % We further note that we consider the degree $D$ of the surface $\mathcal{X}$, the embedding dimension $N$, and the number of equations $m$ defining $\mathcal{X}$ to be \textit{fixed}. The runtime of our algorithm is polynomial in $(\log q \cdot H)$, where $q$ is the size of the finite field and $H$ is a bound  for the heights of the coefficients of the polynomials defining $\mathcal{X}$.
    \end{remark}

\subsection{Motivation}
Our work is fundamentally motivated by the following paraphrase of a question of Serre \cite[Preface]{serre}.
\begin{question}[Serre]
Is there an algorithm that, given a $\mathbb{Z}$ -- scheme $\mathcal{X}$ of finite type, computes the point count of its reduction $\#X(\mathbb{F}_{p})$ at any prime $p$ in time polynomial in $\log p$?   
\end{question}
In particular, this work solves the above question in the case $\dim X=2$, when $X$ is nice, at large enough primes of good reduction. In their book on computing the coefficients of the Ramanujan ${\tau} $ -- function, Couveignes and Edixhoven \cite[Epilogue]{ceramanujan} propose the existence of a strategy to count points on surfaces over finite fields, using the theory of Lefschetz pencils and d\'evissage; techniques which were used in Deligne's celebrated proof \cite{Weili} of the Weil conjectures. If realised, this would be an extension of polynomial-time counting methods from the dimension-one case of curves (and the conceptually similar case of abelian varieties) \cite{schoof, pila} to varieties of a higher dimension. 

An important motivation for these algorithms is computational evidence for conjectures in the {Langlands program} \cite{langlands}, a vast philosophy encompassing several areas of modern mathematics including number theory, representation theory and algebraic geometry. An object of study in part of the program, is the $L $ -- function of a variety $\mathcal{X}/\mathbb{Q}$, a conglomeration of the zeta functions at all the local factors. The Langlands-Rapoport conjecture \cite{lr}, in particular, gives the mod -- $p$ point-counts of Shimura varieties \footnote{algebraic varieties equipped with rich arithmetic data} a certain group-theoretic description.

Another angle of motivation is {diophantine geometry}, i.e., counting or classifying rational points on a variety $\mathcal{X}/\mathbb{Q}$. One approach towards this is computing the {Brauer-Manin obstruction} \cite{cts} (essentially measuring the failure of local-global principles) for specific varieties. This is defined using the Brauer group $\mathrm{H}^{2}(\mathcal{X}, \mathbb{G}_{m})$ of the variety in question, which is the \'etale cohomology in degree two, with coefficients in the multiplicative sheaf. With a view towards the diophantine setting, it would be prudent to have algorithms for the scenario over a finite field, with constant torsion coefficients to begin with. 

\subsection{Potential applications in computing}
A fundamental aspect of our work is the {explicitisation} of the {\'etale cohomology} of a surface, which should be viewed as an arithmetic or discrete analogue of the usual topological or Betti cohomology over the complex numbers. The latter notions do not translate easily to the setting over a finite field, and thus required the revolution of the Grothendieck school, thereby putting the Weil conjectures in proper context.

%{Topological} methods as such, have found computational applications, namely in machine learning \cite{chase}, topological data analysis \cite{tda1, tda2}, vision theory \cite{cv} and pattern recognition \cite{pat}. On the theoretical side, ideas from geometric group theory, namely Cayley complexes, have found applications to error -- correcting codes \cite{ltc}.

Our work lays the stepping stones toward solving a foundational problem for topological computation in the discrete setting, i.e., over finite fields. In particular, we, for the first time, make explicit (and give algorithms to compute) the \'etale cohomology groups with constant torsion coefficients $\mathrm{H}^{i}(X, \mu_\ell)$ of a nice surface $X$. This generalises to being able to compute with cohomology in degrees one and two, for varieties of higher dimension as well \cite{rsv, gcd}.

 The progenitor of point-counting algorithms, Schoof's algorithm \cite{schoof} for elliptic curves, paved the way for elliptic curve {cryptography}, which is ubiquitous today. In particular, it is necessary to run a point-counting algorithm to select a  curve suitable for cryptosystems. It is conceivable that our algorithms may come of use in efficiently designing cryptosystems around surfaces as well. Further, Brauer groups, mentioned earlier, arise naturally in the context of class field theory and homogeneous spaces, for which a general framework has been proposed with regard to applications to cryptography \cite{hhs}.

%Finally, the overarching setting of \'etale cohomology theory, Grothendieck toposes, has been proposed recently  for applications to {deep neural networks} and {artificial intelligence} \cite{belfiore2021topos}.

%....

%\NS{Motivational themes to expand: \\
%- Complexes are being used in CS in error-correction (Locally Testable Code); and expanders (HDX). \\
%- Homology/cohomology groups are useful in identifying shapes (in Vision/Graphics) and predicting data properties (in Topological Data Analysis). \\
%- Our algo for the first \& second \'Etale cohomology group solves a problem in Computational topology: that of generalizing the computation to topology over discrete fields, that are used in CS, rather than the field of complex.\\
%- Counting points on surfaces (just like curves before us) opens up the possibilities of using constant-degree surfaces in cryptosystem design, code design, and cryptanalysis.}

\subsection{Prior work \& special cases}
As mentioned earlier, the first advance in point-counting over finite fields came with Schoof's algorithm for elliptic curves. This was generalised to curves of higher genus and abelian varieties by Pila \cite{pila}. The cohomology groups in higher degree, however, have only recently been shown to be computable \cite{mad, ptv}.

  In Roy-Saxena-Venkatesh \cite{rsv}, a randomised algorithm was given to compute the factor $P_{1}(X/\mathbb{F}_{q}, T)$ for a nice variety $X$ of fixed degree, in time polynomial in $\log q$. Levrat has sketched a strategy to compute the full zeta function for surfaces \cite[IV.3.5, VI.4]{levrat} (see also \cite[\S 5]{leve}) based on the description of Couveignes-Edixhoven, but its runtime is exponential.

%In particular, even for the simple quartic $K3$ surface $X^{4}+Y^{4}+Z^{4}+W^{4}=0$ in $\mathbb{P}^{3}$ (studied by Weil in \cite{weil}), there has been no classical algorithm to compute the number of its mod -- $p$ solutions in time polynomial in $\log p$.

When the characteristic $p$ of the base field is fixed, the point-counting problem is essentially solved by Lauder-Wan \cite{LW} for varieties and Harvey \cite{harvey} for general arithmetic schemes by means of $p $ -- adic algorithms. As opposed to using \'etale cohomology, they feature $p $ -- adic trace formulas. These algorithms, however, have a runtime exponential in $\log p$.

\subsection{Obstructions in the prior techniques}
The main difficulty in counting points on surfaces in polynomial time so far, has been the lack of a concise representation of the \'etale cohomology groups $\mathrm{H}^{i}(X, \mu_\ell)$, particularly for $i=2$, on which the induced Frobenius action may be computed. In the approach of Levrat \cite{leve}, following Edixhoven, one reduces the computation of the group $\mathrm{H}^{2}(X, \mu_\ell)$ to the computation of $\mathrm{H}^{1}(V, \mu_\ell)$, where $V$ is a curve of genus polynomial in $\ell$. While algorithms are known to compute the first cohomology of curves \cite{huaier, couv}, their runtime is exponential in its genus. Thus for a prime $\ell$ of size $O(\log q)$, which is required for the intended Chinese remainder process, the above strategy implemented directly ends up giving an exponential-time algorithm.

Another approach would be to work directly with the Brauer group of $X$, whose $\ell $ -- torsion the group $\mathrm{H}^{2}(X, \mu_{\ell})$ captures. Elements in the Brauer group are, a priori, equivalence classes of Azumaya algebras; but it is not clear how one may obtain bounds to represent them, along with their group law and the equivalence relation they are subjected to.

\subsection{Proof ideas}
Our algorithm studies the \'etale cohomology of a surface by using the formalism of monodromy of vanishing cycles arising from a Lefschetz pencil. More specifically, we fibre the given surface $\mathcal{X}$ \footnote{in this part, we use the notation $\mathcal{X}$ to refer to the base change to the algebraic closure $\mathcal{X}\times_{K} \overline{K}$ as well} as a Lefschetz pencil of hyperplane sections, and then blow it up at the axis, yielding a morphism to $\mathbb{P}^{1}$. The cohomology of the blowup $\tilde{\mathcal{X}}$\footnote{which we now call $\mathcal{X}$, and is equipped with a morphism $\pi:\mathcal{X}\rightarrow \mathbb{P}^{1}$} can be understood using the sequence (\ref{seq}) coming from the Galois cohomology of the tame fundamental group of the line with the critical locus (i.e., the finite set $\mathcal{Z}=\mathbb{P}^{1}\setminus \mathcal{U}$ where the fibres are nodal) removed. In particular, one needs to be able to compute the monodromy action on the cohomology of the generic fibre. 

\hfill

Our solution is to first compute the $\ell $ -- division polynomial system (the zero dimensional ideal whose roots are the distinct $\ell$ -- torsion points) for the torsion in the Jacobian of the generic fibre, and view the choice of a  cospecialisation morphism at a singular point $z$ as picking a Puiseux series expansion around $z$. Working in characteristic zero, we compute the local monodromy using this Puiseux expansion. Additionally, we identify the vanishing cycle $\delta_z$ at $z$ using an auxiliary smooth point $u_z$ within the radii of convergence of the Puiseux expansions around $z$ combined with numerical/diophantine approximation methods in a technique we call `re-centering'. Specifically, we also compute each vanishing cycle as an element in the cohomology $\mathcal{F}_{\overline{\eta}}$ of the generic fibre, where $\mathcal{F}=\mathrm{R}^{1}\pi_{\star}\mu_{\ell}$ is the first derived pushforward on $\mathbb{P}^{1}$.

\hfill 

% However, one still seeks a common, consistent representation for all the vanishing cycles at different singular points to compute the pairings between them. This is resolved by choosing the smooth points $u_{j}$ for the distinct $z_j\in Z$ all congruent modulo $\mathfrak{p}$ to the same finite field element $\mathsf{u}$. This enables us to recover all the vanishing cycles in the cohomology of the fibre at $\mathsf{u}$ via moving to positive characteristic. Below is a high-level overview of the algorithm.

Following this, we move to the \'etale open cover $\mathcal{V}\rightarrow \mathcal{U}$ trivialising the locally constant sheaf $\mathcal{F}\vert_{\mathcal{U}}=\mathrm{R}^{1}\pi_{\star}\mu_{\ell}\vert_{\mathcal{U}}$ on $\mathcal{U}$. Call $\mathcal{E}\subset \mathcal{F}\vert_{\mathcal{U}}$ the locally constant subsheaf of vanishing cycles on $\mathcal{U}$. The normalisation of $\mathbb{P}^{1}$ in the function field of $\mathcal{V}$ yields a morphism of smooth projective curves $\tilde{\mathcal{V}}\rightarrow \mathbb{P}^{1}$ ramified exactly at $\mathcal{Z}$. Calling the representation $\rho_{\ell}:\pi_{1}(\mathcal{U}, \overline{\eta})\rightarrow \mathrm{Aut}(\mathcal{E}_{\overline{\eta}})$, we write $G:=\mathrm{im}(\rho_\ell)$, and note that the cover $\mathcal{V}\rightarrow \mathcal{U}$ has Galois group $G$. The group $G$ acts naturally on $\tilde{\mathcal{V}}$ via automorphisms, which extends to an action on $\mathrm{H}^{1}(\tilde{\mathcal{V}}, \mu_\ell)\simeq \mathrm{Jac}(\tilde{\mathcal{V}})[\ell]$. 

\hfill

Further, to compute the part of $\mathrm{H}^{2}(\mathcal{X}, \mu_\ell)$ corresponding to $\mathrm{H}^{1}(\mathbb{P}^{1}, \mathcal{F})$, it suffices to compute the invariant subspace of $\mathrm{H}^{1}(\tilde{\mathcal{V}}, \mu_\ell)\otimes_{\mathbb{Z}/\ell \mathbb{Z}}\mathcal{E}_{\overline{\eta}}$, under the diagonal action of $G$. This is done by choosing an auxiliary prime $\mathfrak{P}$ with characteristic distinct from $\ell$ and of size $O(\ell)$, of good reduction and isolating the subspace spanned by the images of all $G$- equivariant homomorphisms from $\mathcal{E}_{\overline{\eta}}^{\vee}$ to $\mathrm{Jac}(\tilde{\mathcal{V}}_{\mathfrak{P}})[\ell]$ (which we call the mod-$\mathfrak{P}$ Edixhoven subspace) in the cohomology of the reduced curve. We then $\mathfrak{P}$ -- adically lift the concerned subspace to the char zero Edixhoven subspace $\mathbb{E}$, using work of Mascot \cite{mascothensel} on Hensel-lifting torsion points. With this, the arithmetic Galois action follows, along with zeta function and point counts, for large primes of good reduction.

\subsection{Leitfaden}
Section~\ref{sec:prelim} delineates the cohomological preliminaries that form the fundamental basis of our algorithms. Section~\ref{sec:sub} develops subroutines including Weil pairings and Puiseux expansions for vanishing cycles, which are used in the algorithms of Section~\ref{sec:algos}. The main theorem is proved in Section~\ref{subsec:main}. Complexity analyses of all algorithms are provided in Section~\ref{sec:comp}. The appendices in order include material on recovering the zeta function, background on height theory, a recap of certain results of Igusa, and a known algorithm for computing equations of Jacobians due to Anderson.

%\subsection{Acknowledgements}
%MV thanks MIT and the Simons foundation for making possible a visit, during which a part of this work was conceptualised. MV is  supported by a C3iHub research fellowship.

\section{Cohomological preliminaries}
\label{sec:prelim}
The aim of this section is to compile standard background material on the cohomology of the various varieties that will be required for the algorithm. We present cohomology computations when explicitly known, and point to the existence of algorithms in the curve case: smooth, nodal, and for a smooth curve over the rational function field.

\subsection{Cohomology of a surface}
\label{subsec:etsurf}
 In this subsection, we briefly recall cohomology computations for surfaces. A standard reference is \cite[V.3]{milne80}. Let $k$ be a separably closed field and let ${X}$ be a smooth, projective geometrically irreducible surface over it. Following \cite[Algorithm 3]{rsv}, one may fibre $X$ as a Lefschetz pencil $\pi: \tilde{X}\rightarrow \mathbb{P}^{1}$ of hyperplane sections over the projective line, where $\tilde{X}$ is the surface obtained by blowing up $X$ at the axis $\Upsilon$ of the pencil. Denote $Z\subset \mathbb{P}^{1}$ the finite critical locus, whose corresponding fibres have exactly one node (with $\#Z=r$) and let $U=\mathbb{P}^{1}\setminus Z$ be the locus of smooth fibres. Let $\ell$ be a prime distinct from the characteristic of $k$ and write  $\mathcal{F}:=\mathrm{R}^{1}\pi_{\star}\mu_{\ell}$ for the constructible derived push-forward sheaf on $\mathbb{P}^{1}$. We note that the restriction $\mathcal{F} \vert_{ U}$ is a locally constant sheaf (or local system) on $U$. Let $\overline{\eta}\rightarrow \mathbb{P}^{1}$ be a geometric generic point and let $g$ denote the genus of the generic fibre $X_{\overline{\eta}}$, viewed as a curve over the function field of the projective line. Firstly, one recalls \cite[Lemma 33.2]{milnelec}
 \begin{equation}
 \mathrm{H}^{i}(\tilde{X}, \mathbb{Q}_{\ell})\simeq \begin{cases} \mathrm{H}^{i}(X, \mathbb{Q}_{\ell}), \ i\ne 2; \\
     
 \mathrm{H}^{2}(X, \mathbb{Q}_{\ell})\oplus \mathrm{H}^{0}(\Upsilon\cap X, \mathbb{Q}_{\ell})(-1), \ i=2
 
 \end{cases}
 \end{equation}
 so it suffices to compute the zeta function of $\tilde{X}$ (see Section~\ref{app:rec}). In Algorithm~\ref{algo:blowup}, we detail a method to compute equations for the blowup.

\begin{algorithm}
    \caption{\texttt{Blowup of a surface at a point}}
    \label{algo:blowup}

    \begin{itemize}
        \item \textbf{Input:} A nice surface $X\subset \mathbb{P}^{N}$ presented as homogeneous forms $f_{1}, \ldots , f_{m}$ and a point $P\in X$. Assume without loss, $P=[0:0:\ldots :1]$.
        \item \textbf{Output:} A surface $\tilde{X}$ that is the blowup of $X$ at $P$ and a morphism $\pi:\tilde{X}\rightarrow X$
    \end{itemize}
    \begin{algorithmic}[1]
    \STATE Consider the projection $\varphi_{P}: \mathbb{P}^{N}\setminus P\rightarrow \mathbb{P}^{N-1}$ from $P$.
    \STATE The blowup $\tilde{X}$ of $X$ at $P$ is given by the closure in $X\times \mathbb{P}^{N-1}$ of the graph of $\varphi_P$ restricted to $X\setminus P$.
    \STATE Use the Segre embedding to obtain equations for $\tilde{X}$.
    \STATE The morphism $\pi:\tilde{X}\rightarrow X$ is obtained by projection to the first factor.
        
    \end{algorithmic}
\end{algorithm}
Henceforth, without loss of generality, we may assume $X$ may be fibred as $\pi:X\rightarrow \mathbb{P}^{1}$ as a Lefschetz pencil of hyperplane sections. From the L\'eray spectral sequence $$
\mathrm{H}^{i}(\mathbb{P}^{1}, R^{j}\pi_{\star}\mu_{\ell})\Rightarrow \mathrm{H}^{i+j}(X, \mu_{\ell}),
$$
one has
\begin{equation}
\mathrm{H}^{i}(X, \mu_{\ell})\simeq \begin{cases}
    \mu_{\ell}, \ i=0; \\
    \mathrm{H}^{0}(\mathbb{P}^{1}, \mathcal{F}), i=1;\\
    \mathrm{H}^{1}(\mathbb{P}^{1}, \mathcal{F})\oplus\langle \gamma_{E}\rangle\oplus \langle \gamma_{F}\rangle , \ i=2;\\
    \mathrm{H}^{2}(\mathbb{P}^{1}, \mathcal{F}), \ i=3; \\
    \mu_{\ell}^{\vee},\  i=4; \\
    0, \ i>4.
    
\end{cases}
\end{equation}
 Here $\gamma_{E}$ and $\gamma_{F}$ are certain cycle classes on $X$ (viewed in $\mathrm{H}^{2}$ via the cycle class map) corresponding to the class of a section of $\pi$ and the class of a smooth fibre of $\pi$ respectively. One needs to work more to make the above groups explicit.  
\\ \\
Recall the theory of {vanishing cycles} on a surface \cite[3.1, 3.2]{rsv}. For each $z\in {Z}$, one obtains a mod -- $\ell$ vanishing cycle  $\delta_{z}$ at $z$ as the generator of the kernel of the map $\mathrm{Pic}^{0}(X_{z})[\ell]\rightarrow \mathrm{Pic}^{0}(\widetilde{X}_{z})[\ell]$ induced by the normalisation $\widetilde{X}_{z}\rightarrow X_{z}$. Using a {cospecialisation map}\footnote{which depends on the choice of an embedding of the strict henselisation $\widehat{\mathcal{O}}_{\mathbb{P}^{1}, z}\hookrightarrow k(\overline{\eta})$, see Section~\ref{subsec:cospec}} 

\begin{equation}
\label{eqn:cospec}
\phi_{z_{j}}:\mathcal{F}_{z_{j}}\hookrightarrow \mathcal{F}_{\overline{\eta}}
\end{equation}
for each $z_{j}\in \mathcal{Z}$, one obtains the subspace generated by all the vanishing cycles $\delta_{z_{j}}$ in $\mathcal{F}_{\overline{\eta}}$. The geometric \'etale fundamental group $\pi_{1}(U, \overline{\eta})$ acts on $\mathcal{F}_{\overline{\eta}}$, factoring through the tame quotient $\pi_{1}^{\mathfrak{t}}(U, \overline{\eta})$, via the Picard-Lefschetz formulas. In particular, $\pi_{1}^{\mathfrak{t}}(U, \overline{\eta})$ is generated topologically by $\#Z=r$ elements $\sigma_{j}$ satisfying the relation $\prod_{j}\sigma_{j}=1$. We have for $\gamma\in \mathcal{F}_{\overline{\eta}}$
 \begin{equation}
 \label{eqn:piclef}
 \sigma_{j}(\gamma)= \gamma-\epsilon_{j}\cdot \langle \gamma, \delta_{z_{j}} \rangle\cdot \delta_{z_{j}},
 \end{equation}
 where $\langle \cdot, \cdot \rangle$ denotes the Weil pairing on $\mathrm{Pic}^{0}(X_{\overline{\eta}})[\ell]$ and for a uniformising parameter $\theta_{j}$ at $z_{j}$, one has $\sigma_{j}(\theta_{j}^{1/\ell})=\epsilon_{j}\cdot\theta_{j}^{1/\ell}$. Further, $\sigma_j$ is understood as the canonical topological generator for the tame inertia $I^{\mathfrak{t}}_{z_j}$ at $z_j$ (after having made consistent choices for primitive roots of unity).

 One sees immediately that the monodromy \footnote{action of the \'etale fundamental group on $\mathcal{F}_{\overline{\eta}}$} is symplectic, i.e., the representation $$\rho: \pi_{1}^{\mathfrak{t}}(U, \overline{\eta})\longrightarrow \mathrm{GL}(2g, \mathbb{F}_{\ell})$$ has image in $\mathrm{Sp}(2g, \mathbb{F}_{\ell})$, the group of symplectic transformations of the vector space $\mathbb{F}_{\ell}^{2g}$, as it has to preserve the Weil pairing on $\mathcal{F}_{\overline{\eta}}$. \\ 
 
 Next, one recalls the following complex, \cite[Theorem 3.23]{milne80} coming from the Galois cohomology of $\pi_{1}^{\mathfrak{t}}(U, \overline{\eta})$
\begin{equation}
\label{seq}
    \mathcal{F}_{\overline{\eta}}\xrightarrow{\alpha} \left(\mathbb{Z}/\ell \mathbb{Z}\right)^{r}\xrightarrow{\beta}\mathcal{F}_{\overline{\eta}}
\end{equation}
with, for any $\gamma\in \mathcal{F}_{\overline{\eta}}$
$$
\alpha(\gamma)= (\langle \gamma, \delta_{z_{1}}\rangle, \ldots , \langle \gamma, \delta_{z_{r}}\rangle)$$ and for any $r $ -- tuple $(a_{1}, \ldots , a_{r})\in \left(\mathbb{Z}/\ell \mathbb{Z}\right)^{r}$
$$
\beta(a_{1}, \dots , a_{r})=a_{1}\cdot \delta_{z_{1}}+ a_{2}\cdot \sigma_{1}(\delta_{z_{2}})+\ldots + a_{r}\cdot \left(\prod_{j=1}^{r-1}\sigma_{j}\right)(\delta_{z_{r}}).
$$
  The cohomology groups of the above complex are related to the cohomology of $X$, i.e.,
\begin{equation}
    \mathrm{H}^{i}(X, \mu_\ell)\simeq \begin{cases}
        \ker(\alpha), \ i=1; \\
        \left(\ker(\beta)/\mathrm{im}(\alpha)\right) \oplus < \gamma_{E}> \oplus <\gamma_{F}>, \ i=2; \\
        \mathrm{coker}(\beta), \ i=3.
    \end{cases}
\end{equation}
In particular, we have that $\mathrm{H}^{1}(\mathbb{P}^{1}, \mathcal{F}) \simeq \ker(\beta)/\mathrm{im}(\alpha)$. If the situation is over a finite field, it is sufficient to compute the action of the Frobenius $F_{q}^{\star}$ on $\mathrm{H}^{1}(\mathbb{P}^{1}, \mathcal{F})$ as it acts as `multiplication by $q$' on $<\gamma_{E}>$ and $<\gamma_F>$. More generally, the Galois action on $<\gamma_E>$ and $<\gamma_F>$ is via the cyclotomic character.

% This simplification of the concerned cohomology groups lends itself to the following rough computation strategy.
% \begin{itemize}
%     \item Make the cospecialisation maps $\mathcal{F}_{z_{j}}\hookrightarrow \mathcal{F}_{\overline{\eta}}$ explicit.
%     \item Identify the $\delta_{z_{j}}$ as elements of $\mathcal{F}_{\overline{\eta}}$ consistently.
%     \item Compute pairings in $\mathcal{F}_{\overline{\eta}}$.
% \end{itemize}
% This is carried out in Sections~\ref{sec:sub} and~\ref{sec:algos}, using the method of Puiseux series and lifting to characteristic zero (which is where our situation arises).

\subsection{Cohomology of a smooth fibre}
\label{subsech:sm}
Let $X_{u}$ be a smooth fibre of the Lefschetz pencil $\pi: X\rightarrow \mathbb{P}^{1}$ at a point $u\in {U}$. The objective of this section is to state how to compute and efficiently represent the $\ell$ -- torsion in the Jacobian of $X_{u}$, i.e., the group $\mathrm{Pic}^{0}(X_{u})[\ell]\simeq (\mathbb{Z}/\ell\mathbb{Z})^{2g}$. Algorithms for this procedure are known, see e.g., \cite{huaier} and \cite{pila}. The two are markedly different, in that the former works with the Jacobian by means of divisor arithmetic whereas the latter requires an explicit embedding of the Jacobian including equations and addition law. We use both for different applications.  
\begin{remark}
Over a finite field, knowing the zeta function of $X_{u}$, an algorithm of Couveignes \cite[Theorem 1]{couv} also computes $\mathrm{Pic}^{0}(X_{u})[\ell]$, but any (known) algorithm that computes $Z(X_{u}/ \mathbb{F}_{Q}, T)$ in time poly$(\log Q)$ also computes the $\ell$ -- torsion in the Jacobian for small primes $\ell$ first as a subroutine.
\end{remark}

\begin{theorem}[Arithmetic on Jacobians via divisors]
\label{thm:jacarith}
    Given a curve $C$ of genus $g$ over an effective field $k$, and a divisor $E$ on $C$ of degree $d$, there exists an algorithm that computes a basis for the Riemann-Roch space $\mathcal{L}(E)$ in time
    $$
    \mathrm{poly}(g\cdot d).
    $$
    Moreover, arithmetic on $\mathrm{Pic}^{0}(C)$ can be performed in polynomial time.
\end{theorem}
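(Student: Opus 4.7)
The plan is to reduce both halves of the statement to a single core subroutine, namely computing a $k$-basis of $\mathcal{L}(E)$ for an arbitrary divisor $E$ on $C$. Given such a subroutine, arithmetic on $\mathrm{Pic}^{0}(C)$ is implemented through the classical reduced-divisor representation: fix a place $P_{0}$ of small degree on $C$ and represent every class $[A]\in \mathrm{Pic}^{0}(C)$ by an effective divisor $D$ of degree $g$ with $A\sim D-g\cdot P_{0}$. Such a representative exists by Riemann-Roch and is unique away from a proper closed subvariety of $\mathrm{Pic}^{0}$; on the special locus one keeps instead an effective divisor of degree $\leq g$ together with the auxiliary data needed to pin the class down.

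For the Riemann-Roch subroutine itself, I would follow the function-field approach of Hess, after Huang-Ierardi. Pick a separating transcendence generator $x\in k(C)$ extracted from the defining equations of $C$, write $k(C)=k(x)[y]/(F(x,y))$ for a minimal polynomial $F$ of degree polynomial in $g$, and compute integral closures $\mathcal{O}_{\mathrm{fin}}$ and $\mathcal{O}_{\infty}$ of $k[x]$ and of its localisation at infinity inside $k(C)$. These provide $k[x]$-module presentations in which global sections of invertible sheaves on the smooth model of $C$ are represented by vectors of polynomials of controlled degree. The space $\mathcal{L}(E)$ is then cut out inside a free module of rank at most $2g$ by a bounded number of linear conditions, one family at the places in $\mathrm{Supp}(E)$ and one at infinity, reducing the task to Hermite normal form computations and rational linear algebra over $k$ of sizes $\mathrm{poly}(g,d)$.

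With $\mathcal{L}(\cdot)$ in hand, the group law $[D_{1}-gP_{0}]+[D_{2}-gP_{0}]$ is computed by picking any nonzero $f\in \mathcal{L}(D_{1}+D_{2}-gP_{0})$ (nonzero by Riemann-Roch, since this divisor has degree $g$) and returning the effective degree-$g$ divisor $D_{3}:=\mathrm{div}(f)+D_{1}+D_{2}-gP_{0}$. Inversion sends $D-gP_{0}$ to the reduced representative of the class of $gP_{0}-D$, and equality is tested by checking whether $\mathcal{L}(A-B)\neq 0$ for the degree-zero divisor $A-B$. The main obstacle is ensuring that the Riemann-Roch step is genuinely polynomial-time in $g\cdot d$: one must control the bit sizes of the integral bases, manage the singularities of the projective model via its normalisation without a degree blow-up, and certify that the ambient $k[x]$-modules have rank and entry sizes polynomial in $g$. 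This is exactly what the Hess and Huang-Ierardi analyses provide, and both assertions of the theorem follow at once.
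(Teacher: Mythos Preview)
Your argument is correct. The paper's own proof is a two-line appeal to the literature: it cites \cite{hirr} and \cite{lespa} for the Riemann--Roch computation and \cite{khuri1,khuri} (Khuri--Makdisi) for arithmetic on $\mathrm{Pic}^{0}(C)$. Your treatment of the Riemann--Roch half is in the same spirit as the paper's citations---the Hess/Huang--Ierardi function-field method with integral bases and linear algebra over $k[x]$ is exactly the sort of algorithm being invoked.

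Where you diverge is in the second half. You implement $\mathrm{Pic}^{0}$ arithmetic via the classical reduced-divisor representation (fix a base place $P_{0}$, represent classes by effective divisors of degree $g$, and use $\mathcal{L}(\cdot)$ to add, invert, and test equality). The paper instead defers to Khuri--Makdisi, whose method represents a divisor class by a subspace of $\mathrm{H}^{0}(C,\mathcal{O}(nD_{0}))$ for a fixed large divisor $D_{0}$, and performs the group law purely by linear algebra on these subspaces without ever factoring a divisor into points. Both routes are polynomial-time and both reduce to Riemann--Roch computations, so either suffices for the theorem as stated. The practical trade-off is that Khuri--Makdisi's subspace representation avoids the need for a small-degree base place and is the format assumed by Mascot's Hensel-lifting algorithm (Theorem~\ref{thm:mascot}), which the paper invokes later in Section~\ref{subsec:htedix}; your explicit-divisor representation is conceptually more transparent and directly yields the point-supported divisors one wants for, e.g., the pairing computations in Section~\ref{subsec:pairing}. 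One small caveat: your remark that the reduced representative is unique off a proper closed locus, with ``auxiliary data'' retained on the theta divisor, is not needed for correctness---your equality test via $\mathcal{L}(A-B)\neq 0$ already handles any non-uniqueness---so you could simply drop that aside.
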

\begin{proof}
    Apply \cite{hirr} or \cite{lespa} for computing Riemann-Roch spaces. Divisor arithmetic on the Jacobian can be done using \cite{khuri1, khuri}.
\end{proof}

\begin{theorem}[Huang-Ierardi]
\label{thm:huaier}
    Let $C\subset \mathbb{P}^{N}$ be a smooth, projective curve of genus $g$ over an effective field $k$ and let $\ell$ be  a prime distinct from the characteristic of $k$. There exists an algorithm to compute $\mathrm{Pic}^{0}(C)[\ell]$ via divisor representatives in time $\mathrm{poly}(\ell)$. If $k=\mathbb{F}_{q}$ is a finite field, the complexity is polynomial in $\log q$ as well.
\end{theorem}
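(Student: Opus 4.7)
The plan is to reduce the computation of $\mathrm{Pic}^{0}(C)[\ell]$ to solving a zero-dimensional polynomial system whose solutions biject with the $\ell$-torsion divisor classes, leveraging the divisor arithmetic already granted by Theorem~\ref{thm:jacarith}.

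First, I would fix a degree-one divisor $\infty$ on $C$ (after a small extension of $k$ if necessary) and invoke Riemann-Roch to represent every class in $\mathrm{Pic}^{0}(C)$, away from a lower-dimensional locus, uniquely in the form $[D - g\cdot \infty]$ with $D$ an effective divisor of degree $g$. Parametrise such $D$ by its symmetric function coordinates in an affine chart of $\mathrm{Sym}^{g}(C)$. The $\ell$-torsion condition $[\ell(D - g\cdot \infty)] = 0$ translates to the existence of a function $f \in \mathcal{L}(\ell g \cdot \infty)$ with $\mathrm{div}(f) + \ell g \cdot \infty = \ell D$. Using Theorem~\ref{thm:jacarith}, compute a basis $\phi_{1}, \ldots, \phi_{N}$ of this Riemann-Roch space in time polynomial in $\ell g$, write $f = \sum_{i} c_{i} \phi_{i}$ with indeterminate coefficients $c_{i}$, and expand the divisor condition as polynomial equations in the $c_{i}$ together with the symmetric coordinates of $D$; the prescribed pole order at $\infty$ and vanishing to order $\ell$ at each point of $D$ supply enough equations to cut out a zero-dimensional scheme.

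Having assembled this system, apply any standard zero-dimensional polynomial solver (e.g.\ a rational univariate representation or a Gr\"obner basis computation under suitable term orderings) to enumerate its roots. By construction, the isolated solutions biject with the $\ell^{2g}$ elements of $\mathrm{Pic}^{0}(C)[\ell]$; for fixed genus $g$, the number of unknowns and the total degree are both $\mathrm{poly}(\ell)$, so the solver runs in $\mathrm{poly}(\ell)$ arithmetic operations over $k$. Each solution recovers an effective divisor $D$ of degree $g$ whose class is the corresponding $\ell$-torsion element. When $k = \mathbb{F}_{q}$, each arithmetic operation contributes an additional $\mathrm{poly}(\log q)$ factor, yielding the stated bound.

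The principal obstacle I anticipate is controlling the cost of the solver step: worst-case Gr\"obner basis complexity is doubly exponential in the number of variables, so one must exploit the geometric fact that the scheme cut out is \emph{zero-dimensional of a priori bounded degree} $\ell^{2g}$ to extract a bound polynomial in $\ell$. This can be handled by Lakshman-type bounds for systems of bounded B\'ezout degree, or by choosing a generic linear form to place the ideal in shape position, thereby reducing extraction of the torsion to factoring a single univariate polynomial of degree $\ell^{2g}$ over $k$, an operation which is itself polynomial time in $\ell$ and $\log q$.
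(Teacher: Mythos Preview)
The paper's own ``proof'' of this statement is a one-line citation to \cite[\S 5]{huaier}; it does not reprove the result. Your proposal therefore goes well beyond what the paper does: you have sketched an actual construction of the $\ell$-division system via divisor arithmetic, whereas the paper simply defers to the literature. In that sense your write-up is not really comparable to the paper's proof --- it is a sketch of the cited result itself, and is broadly faithful to the Huang--Ierardi strategy of encoding the $\ell$-torsion condition as a zero-dimensional polynomial system built from Riemann--Roch data.

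One point in your sketch that would need tightening in a full proof: the phrase ``away from a lower-dimensional locus'' hides the fact that the map $\mathrm{Sym}^{g}(C)\to \mathrm{Pic}^{0}(C)$ sending $D\mapsto [D-g\cdot\infty]$ is only birational, and some $\ell$-torsion classes may lie on the theta divisor, where either no effective $D$ of degree $g$ exists in that form or the representation is non-unique. You would need either to vary the base divisor, or to work with effective divisors of degree $g+c$ for small $c$ (sacrificing uniqueness but gaining surjectivity) and then quotient by the resulting redundancy. Huang--Ierardi handle this, but your sketch as written does not explain why no torsion point is missed. The solver-complexity discussion you give is adequate for the intended bound, since the system is zero-dimensional of degree $\ell^{2g}$ in a fixed number of variables once $g$ is fixed.
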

\begin{proof}
    See \cite[\S 5]{huaier}.
\end{proof}

\begin{theorem}[Pila]
\label{thm:pila}
    Let $C\subset \mathbb{P}^{N}$ be a smooth, projective curve of genus $g$ over an effective field $k$ and let $\ell$ be  a prime distinct from the characteristic of $k$. Assume $\mathrm{Pic}^{0}(C)=\mathrm{Jac}(C)$ is provided as an abelian variety via homogeneous polynomial equations in $\mathbb{P}^{M}$ along with addition law. Then, there exists an algorithm to compute the points representing $\mathrm{Pic}^{0}(C)[\ell]$ in $\mathbb{P}^{M}$ in time polynomial in $\ell$.  If $k=\mathbb{F}_{q}$ is a finite field, the complexity is polynomial in $\log q$ as well.
\end{theorem}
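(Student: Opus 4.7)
The plan is to adapt the classical strategy of Schoof for elliptic curves to the higher-dimensional setting, following Pila. Since $J := \mathrm{Jac}(C)$ is provided concretely as a projective variety in $\mathbb{P}^M$ together with an explicit addition law $\mu: J \times J \to J$, the idea is to iteratively build the multiplication-by-$\ell$ morphism $[\ell]: J \to J$ and then extract its zero fibre as a zero-dimensional subscheme whose $k$-points are exactly the sought-after $\ell$-torsion.

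First I would use repeated doubling together with the provided addition law to construct an explicit description of $[\ell]: J \to J$. By the theorem of the cube, $[\ell]$ is described, on an affine covering of $J$, by tuples of polynomials of total degree $O(\ell^2)$ in the homogeneous coordinates on $\mathbb{P}^M$, with coefficient bit-size polynomial in $\log \ell$ (and in $\log q$ in the finite-field case). The subtlety is that naively composing the doubling polynomials produces exponential degree growth, so I would maintain at each recursive step a bounded-degree description by exploiting the fact that $[\ell]^{\ast}\mathcal{L} \cong \mathcal{L}^{\otimes \ell^2}$ for the ample line bundle $\mathcal{L}$ giving the embedding, and by reducing all intermediate expressions modulo the homogeneous ideal of $J$ in $\mathbb{P}^M$ after each doubling/addition step.

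Next I would assemble the zero-dimensional ideal $I$ generated by the defining equations of $J$ together with the components of the equation $[\ell]P = O_J$, where $O_J$ is the identity element. The scheme $V(I)$ is precisely $J[\ell]$, of length $\ell^{2g}$, which is polynomial in $\ell$ since $g$ is fixed. Solving this zero-dimensional system via Gr\"obner bases or resultant-based elimination then produces explicit projective representatives for all $\ell$-torsion points in time polynomial in the degree of the system, hence polynomial in $\ell$. Over a finite field $k = \mathbb{F}_q$, the coordinates of these points live in an extension $\mathbb{F}_{q^s}$ where $s$ is bounded by the order of the Frobenius image in $\mathrm{GL}_{2g}(\mathbb{F}_\ell)$, hence at most $|\mathrm{GL}_{2g}(\mathbb{F}_\ell)| = \mathrm{poly}(\ell)$, so all arithmetic remains polynomial in $\log q$ and $\ell$. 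The main obstacle I anticipate is precisely the first step: controlling the degree and bit-size of the polynomials representing $[\ell]$ throughout the recursive construction, which requires systematic reduction modulo the ideal of $J$ and, when the addition law is only given patchwise, careful tracking of which coordinate chart is valid at each stage to keep the description of $[\ell]$ from ballooning.
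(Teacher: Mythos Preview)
Your proposal is correct and follows precisely the approach of Pila's original paper, which is all the paper does here: its proof is the one-line citation ``See \cite[\S 2, \S 3]{pila}.'' Your sketch in fact supplies more detail than the paper itself, and the obstacle you flag---controlling the degree of the polynomials defining $[\ell]$ via reduction modulo the ideal of $J$ and the theorem of the cube---is exactly the technical heart of Pila's argument.
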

\begin{proof}
    See \cite[\S 2, \S 3]{pila}.
\end{proof}

\subsection{Cohomology of a nodal fibre}
\label{subsec:nod}
Let $X_{z}$ be a nodal curve, obtained as a critical fibre of the Lefschetz pencil in the previous section. The objective of this section is to state how we may represent and compute the cohomology  $\mathrm{H}^{1}(X_{z}, \mu_{\ell})\simeq \mathrm{Pic}^{0}(X_{z})[\ell]\simeq (\mathbb{Z}/\ell \mathbb{Z})^{2g-1}$ concisely. Let $\widetilde{X}_{z}\rightarrow X_{z}$ be the normalisation of this nodal curve. Let $P_{z}\in X_{z}$ denote its singularity and let $D_{z}=Q_{z}+R_{z}$ denote the exceptional divisor on $\widetilde{X}_{z}$, where $Q_{z}, R_{z}\in \widetilde{X}_{z}$. It is possible to describe $\mathrm{Pic}^{0}(X_{z})$ in terms of $\mathrm{Pic}^{0}(\widetilde{X}_{z})$ and $D_{z}$. First, write $$\mathrm{Div}_{D_{z}}(\widetilde{X}_{z}):=\mathrm{Div}(\widetilde{X}_{z}\setminus \{Q_{z}, R_{z}\})$$ and let $k(\widetilde{X}_{z})$ denote the function field of $\widetilde{X}_{z}$. For $f\in k(\widetilde{X}_{z})^{*}$, we say $$f\equiv 1 \ \mathrm{mod} \ D_{z} \ \ \text{if} \ \ v_{Q_{z}}(1-f)\geq 1 \ \ \text{and} \ \ v_{R_{z}}(1-f)\geq 1.$$  Define
\begin{equation}
\mathrm{Pic}^{0}_{D_{z}}(\widetilde{X}_{z}):=\mathrm{Div}^{0}_{D_{z}}(\widetilde{X}_{z})/\langle \{ \mathrm{div}(f) \ \vert \ f\equiv 1 \mod D_{z}\}\rangle.
\end{equation}
Then, it is possible to show \cite[Chapter V]{serrealg}\footnote{see also \cite[Lemma 2.3.8]{levrat}} that $\mathrm{Pic}^{0}(X_{z})\simeq \mathrm{Pic}^{0}_{D_{z}}(\widetilde{X}_{z})$. In particular, we have 
\begin{equation}
\label{eqn:picc}
\mathrm{Pic}^{0}(X_{z})[\ell]\simeq \mathrm{Pic}^{0}_{D_{z}}(\widetilde{X}_{z})[\ell].
\end{equation}
 The upshot is that we may also represent the elements (and group law) of the LHS in the isomorphism~\ref{eqn:picc}, using effective Riemann-Roch algorithms on the normalisation. In particular, one can isolate the subspace generated by the vanishing cycle at $z$, namely $\langle\delta_{z}\rangle\subset \mathrm{Pic}^{0}(X_{z})[\ell]$, as the kernel of the natural induced map
 $$\mathrm{Pic}^{0}_{D_{z}}(\widetilde{X}_{z})[\ell]\longrightarrow \mathrm{Pic}^{0}(\widetilde{X}_{z})[\ell].$$
 \begin{remark}
   We may compute the elements of $\mathrm{Pic}^{0}(X_z)[\ell]$ via specialisation to $z$ of the ideal $\prescript{(\ell)}{}{}\mathcal{I}_{\overline{\eta}}$ computing the $\ell$ --  torsion in the generic fibre using Algorithm~\ref{algo:genelldiv}. By a result of Igusa \cite[Theorem 3]{igusa}, we know that the $\overline{k}$ -- roots of this specialisation contain the $\ell^{2g-1}$ torsion elements of the generalised Jacobian $\mathrm{Pic}^{0}(X_z)[\ell]$. The other roots correspond to singularities of the completion of the generalised Jacobian $\mathrm{Pic}^{0}(X_z)$ by Theorem~\ref{thm:igtors}.
\end{remark}
 
 It requires more work to completely identify the vanishing cycle $\delta_{z}$ (upto sign), this is done in Section~\ref{sec:sub} using the Picard-Lefschetz formulas (\ref{eqn:piclef}).

\subsection{Cohomology of the generic fibre}
\label{subsec:gen}
As a result of the Lefschetz fibration $\pi:X\rightarrow \mathbb{P}^{1}$, we may think of the surface $X$ as defining a relative curve over $k(t)$, the function field of the projective line. We refer to this curve as the `generic fibre' of the pencil, $X_{\overline{\eta}}$. Scheme-theoretically, this corresponds to the fibre of $\pi$ over a geometric generic point $\overline{\eta}\rightarrow \mathbb{P}^{1}$. The stalk $\mathcal{F}_{\overline{\eta}}\simeq \mathrm{Pic}^{0}(X_{\overline{\eta}})[\ell]$ is the $\ell$ -- torsion in the Jacobian of this relative curve of genus $g$. \footnote{The genus of any smooth fibre over $u\in {U}$ will also be $g$.} 

The main objective of this section is to describe a zero-dimensional radical ideal $\prescript{(\ell)}{}{}\mathcal{I}_{\overline{\eta}}$ over $k(t)$\footnote{i.e., one-dimensional over $k$}, whose $\overline{k(t)}$ -- roots correspond exactly to elements of $\mathcal{F}_{\overline{\eta}}$. First, we bound the degree of this system. We know that $\mathcal{F}_{\overline{\eta}}\simeq (\mathbb{Z}/\ell \mathbb{Z})^{2g}$ as an abelian group, so the system has  $\ell^{2g}$ -- many $\overline{k(t)}$ -- roots. It remains to bound the degree of the system in $t$, i.e., the degree of the polynomials in $t$ occurring as coefficients of the above system. First, we note by \cite[\S 4.2]{rsv} 
\begin{equation}
\#Z\leq D^{N+1} \ \ \ \ \text{and} \ \ \ \ g\leq D^{2}-2D+1.
\end{equation}
Next, denote by $\kappa$ the minimal Galois extension of ${\overline{k}(t)}$ that all the elements of $\mathcal{F}_{\overline{\eta}}$ can be defined over. We know that the extension $\kappa/\overline{k}(t)$ has its Galois group as a subgroup of $\mathrm{Sp}(2g, \mathbb{F}_{\ell})$, so in particular, its degree is bounded above by $\ell^{4g^{2}}$. Further, we see that the curve $V$ obtained by normalising the function field of $U$ in $\kappa$ gives an \'etale cover $V\rightarrow U$ which trivialises the locally constant sheaf $\mathcal{F}\vert_{U}$ to a {constant} sheaf $\mathcal{G}$ on $V$. More specifically, $V$ is a cover of $\mathbb{P}^{1}$ of degree bounded by $\ell^{4g^{2}}$, tamely ramified at $Z$. Therefore, the product $$\#Z\cdot \ell^{4g^{2}}\leq D^{N+1}\ell^{4(D+1)^{4}}$$ which is polynomial in $\ell$, serves as an upper bound for the genus $g_{V}$ of $V$\footnote{by the Riemann-Hurwitz formula}; and hence, also for the complexity of the system $\prescript{(\ell)}{}{}\mathcal{I}_{\overline{\eta}}$ in the variable $t$. 

\begin{remark}
    Mascot \cite[Algorithm 2.2]{mascot} also proposes an algorithm to compute $\ell$ -- division polynomials for the Jacobian of a curve over $\mathbb{Q}(t)$, based on $(p', t)$ -- adically lifting torsion points for a small, auxiliary prime $p'$. It is however mentioned \cite[Remark 4.3]{mascot} that parts of his algorithm are not rigorous.
\end{remark}

\begin{algorithm}
    \caption{\texttt{Computing the} $\ell$ -- \texttt{division ideal of} $\mathrm{Pic}^{0}({X}_{\overline{\eta}})$}
    \label{algo:genelldiv}
     \begin{itemize}
        \item \textbf{Input:} A Lefschetz pencil $\pi:X\rightarrow \mathbb{P}^{1}$.
        
        \item \textbf{Output:} A radical ideal $\prescript{(\ell)}{}{}\mathcal{I}_{\overline{\eta}}$ over $k(t)$ whose $\overline{k(t)}$ -- roots correspond to the $\ell $ -- torsion points of $\mathrm{Pic}^{0}(X_{\overline{\eta}})$.
    \end{itemize}

    \begin{algorithmic}[1]

    \STATE Compute equations for $\mathrm{Pic}^{0}({X}_{\overline{\eta}})=\mathrm{Jac}({X}_{\overline{\eta}})$ using Theorem~\ref{thm:eqnjac}, realising it as a subvariety of $\mathbb{P}^{M}$. 

    \STATE Compute the multiplication by $\ell$ -- map as a morphism on $\mathrm{Pic}^{0}({X}_{\overline{\eta}})$ by Theorem~\ref{thm:eqnjac}.
    
    \STATE Compute the equations for the pre-image of the identity element of the Jacobian.

    \STATE Return the ideal $\prescript{(\ell)}{}{\mathcal{I}_{\overline{\eta}}}$ so obtained.
\end{algorithmic}
\end{algorithm}
\begin{remark}
    Algorithm~\ref{algo:genelldiv} also provides an algorithm to compute the $\ell$ -- division ideal corresponding to $\mathrm{Pic}^{0}({X}_u)$ for a smooth $u\in {U}$ by simply specialising $\prescript{(\ell)}{}{}\mathcal{I}_{\overline{\eta}}$ to $u$. 
\end{remark}

\section{Essential subroutines}
\label{sec:sub}

In this section, we compute and explicitly present the monodromy representation of the \'etale fundamental group associated to the sheaf of vanishing cycles. Specifically, we recall pairing algorithms and Puiseux series to construct the cospecialisation maps at singular points, and specialisation to smooth points with the final motive of computing the monodromy action on the cohomology of the generic fibre. As a by product, we also explcitly compute local monodromy, and the vanishing cycle at each singular point.

%\subsection{Deformations}
%\label{subsec:def}
%The objective of this subsection is to introduce deformations of curves, in an effort to algorithmically understand arithmetic in the cohomology of the generic fibre, i.e., the relative Picard group $\mathrm{Pic}^{0}(X_{\overline{\eta}})$.

%\begin{definition}
 %   Let $C\rightarrow \mathrm{spec}(k)$ be a curve over a field $k$. An $m^{\text{th}}$ -- order \textit{deformation} of $C$ is a curve $\mathbf{C}\rightarrow \mathrm{spec}(R)$ over the ring $R:=k[t]/\langle t^{m} \rangle$, such that the special fibre is $C$.
%\end{definition}

\subsection{Pairing}
\label{subsec:pairing}
Now, we define the Weil pairing on the $\ell$ -- torsion points on the Jacobian of a curve and delineate an efficient algorithm to compute it.

\begin{definition}
Let $C$ be a smooth projective curve over an algebraically closed field $k$, let $J$ be its Jacobian and let $\ell$ be a prime number. The $\mathrm{mod}$ -- $\ell$ Weil pairing on $J$ is a map
\begin{equation*}
    J[\ell]\times J[\ell]\longrightarrow \mu_{\ell}
\end{equation*}
given by
\begin{equation*}
    (D_1, D_2)\mapsto \langle D_1, D_2\rangle. 
\end{equation*}
Let $\ell\cdot D_1=\mathrm{div}(f)$ and $\ell\cdot D_2=\mathrm{div}(g)$ for $f,g \in k(C)^{*}$. Then, $\langle D_1, D_2\rangle = \frac{f(D_2)}{g(D_1)}$.
\end{definition}

\begin{theorem}
\label{thm:pairing}
    There exists an algorithm, that, on input a smooth, projective curve $C$ over $\mathbb{F}_{q}$, a prime number $\ell$ coprime to $q$, two $\ell$ -- torsion divisors $D_{1}, D_{2}\in \mathrm{Pic}^{0}(C)[\ell]$, computes the Weil pairing $\langle D_{1}, D_{2} \rangle $ in time   $$
    \mathrm{poly}(\log q \cdot \ell).
    $$
\end{theorem}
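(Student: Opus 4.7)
The plan is to implement a generalised Miller-style algorithm, leveraging the effective divisor arithmetic on $\mathrm{Pic}^{0}(C)$ provided by Theorem~\ref{thm:jacarith}, so that the auxiliary functions $f$ and $g$ (with $\mathrm{div}(f)=\ell D_{1}$ and $\mathrm{div}(g)=\ell D_{2}$) are never written down in full; only their values at a single evaluation divisor are propagated throughout the computation.

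First I would replace $D_{2}$ by a linearly equivalent divisor $D_{2}'$ whose support is disjoint from that of $D_{1}$, and symmetrically produce $D_{1}'$ disjoint from the support of $D_{2}$. This is a standard Riemann--Roch maneuver: pick an effective auxiliary divisor $E$ of suitable degree, then choose from the basis of $\mathcal{L}(D_{2}+E)$ computed by Theorem~\ref{thm:jacarith} a function $h$ not vanishing at any point of $\mathrm{supp}(D_{1})$, and set $D_{2}':=D_{2}+E-\mathrm{div}(h)$. Since $g$ is bounded and $k=\mathbb{F}_{q}$, such an $h$ can be found (by linear algebra, or failing that by randomising over a small extension of $\mathbb{F}_{q}$) in $\mathrm{poly}(g,\log q)$ time.

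Next, to obtain the scalar $f(D_{2}')$ without ever computing $f$ explicitly, I would perform $O(\log \ell)$ double-and-add steps guided by the binary expansion of $\ell$. At each intermediate exponent $m$ I maintain a reduced representative $\tilde{D}_{m}$ of the class of $mD_{1}$ in $\mathrm{Pic}^{0}(C)$ together with a scalar
\[
v_{m} \;:=\; f_{m}(D_{2}') \;\in\; k^{*}, \qquad \text{where } \mathrm{div}(f_{m}) \;=\; m\,D_{1} \;-\; \tilde{D}_{m}.
\]
At the doubling step $m \mapsto 2m$, the divisor arithmetic of Theorem~\ref{thm:jacarith} yields a function $h_{m}$ with $\mathrm{div}(h_{m})=2\tilde{D}_{m}-\tilde{D}_{2m}$, and I update $v_{2m}:= v_{m}^{2}\cdot h_{m}(D_{2}')$; the addition step $m \mapsto m+1$ is analogous. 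Since $\ell D_{1} = 0$ in $\mathrm{Pic}^{0}(C)$, the recursion terminates after $O(\log \ell)$ iterations with $\tilde{D}_{\ell}=0$ and $v_{\ell}=f(D_{2}')$. Running the symmetric procedure gives $w_{\ell}=g(D_{1}')$, and I would output $v_{\ell}/w_{\ell}$; this lies in $\mu_{\ell}$ by Weil reciprocity applied to $f$ and $g$ on disjoint supports.

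The main technical obstacle I anticipate is the bookkeeping around support disjointness: the reduced representatives $\tilde{D}_{m}$ emerging mid-algorithm can easily meet the support of the evaluation divisor $D_{2}'$, at which point the scalar update above becomes undefined. The remedy is to detect such collisions inside each iteration and replace $\tilde{D}_{m}$ by a linearly equivalent reduced divisor disjoint from $\mathrm{supp}(D_{2}')$ via one additional Riemann--Roch call, updating $v_{m}$ by the resulting multiplicative witness. With this safeguard each iteration costs $\mathrm{poly}(g,\log q)$, and the total over $O(\log \ell)$ iterations is $\mathrm{poly}(g,\log \ell,\log q)$; with $g$ bounded in terms of the fixed surface, this is comfortably within the claimed $\mathrm{poly}(\log q \cdot \ell)$ budget.
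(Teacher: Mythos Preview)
Your proposal is correct. The paper's own proof is simply a citation to \cite[\S 16.1]{handbook} and \cite[Lemma 10]{couv}, and the accompanying Algorithm~\ref{algo:pairing} takes a more direct route than yours: it computes the functions $f$ and $g$ with $\mathrm{div}(f)=\ell D_{1}$ and $\mathrm{div}(g)=\ell D_{2}$ in full via a single effective Riemann--Roch call (Theorem~\ref{thm:jacarith}) and then evaluates $f(D_{2})/g(D_{1})$ using Couveignes' lemma. Your Miller-style double-and-add avoids ever materialising $f$ or $g$, replacing one large Riemann--Roch computation on a divisor of degree $O(\ell g)$ by $O(\log \ell)$ small ones on divisors of degree $O(g)$; this is exactly the content of the handbook reference and buys a genuine speedup (polylogarithmic rather than polynomial in $\ell$), though both land inside the stated $\mathrm{poly}(\log q\cdot \ell)$ budget. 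Your treatment of support disjointness and mid-algorithm collisions is the standard safeguard and is handled correctly.
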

\begin{proof}
    See \cite[\S 16.1]{handbook} or \cite[Lemma 10]{couv}.
\end{proof}

\begin{algorithm}
    \caption{\texttt{Computing the Weil pairing}}
    \label{algo:pairing}
    \begin{itemize}
        \item \textbf{Input:} A smooth projective curve $C$ over $\mathbb{F}_{q}$ and two divisors $D_{1}, D_{2}\in \mathrm{Pic}^{0}(C)[\ell]$.
        \item \textbf{Output:} The value $\langle D_{1}, D_{2}\rangle \in \mu_{\ell}(\overline{\mathbb{F}}_{q})$.
    \end{itemize}

    \begin{algorithmic}[1]
    \STATE Find $f, g \in k(C)^{*}$ such that $\mathrm{div}(f)=\ell \cdot D_{1}$ and $\mathrm{div}(g)=\ell \cdot D_{2}$ using an effective Riemann-Roch algorithm from Theorem~\ref{thm:jacarith}.
     \STATE Evaluate $\frac{f(D_{2})}{g(D_{1})}$ using \cite[Lemma 10]{couv}.

     \STATE Return the value of $\frac{f(D_{2})}{g(D_{1})}$.
    \end{algorithmic}
\end{algorithm}

\begin{remark}
    While the algorithm from \cite{couv} runs with stated complexity over a finite field, it works over a number field as well, with similar dependence on $\ell$. We note that for a curve $C$ over a number field $K$, the $\ell$ -- torsion is defined over an extension $K'$ of $K$ of degree a polynomial in $\ell$ as $\mathrm{Gal}(K'/K)\subset \mathrm{GL}(2g, \mathbb{F}_{\ell})$, where $g$ is the genus of $C$. The height of the $\ell$ -- torsion elements is bounded, by Theorem~\ref{thm:tors_height}. Additionally, we note that there are also pairing algorithms running in time polynomial in $\ell$ that work directly with an embedding of the Jacobian of the curve. See \cite{lr2, lrpairing}.
\end{remark}

\subsection{Cospecialisation at a singular fibre}
\label{subsec:cospec}
In this subsection, we make the cospecialisation maps (\ref{eqn:cospec}) from the cohomology of a special fibre to that of the generic fibre, explicit.

\hfill

Let $\pi:\mathcal{X}\rightarrow \mathbb{P}^{1}$ be a Lefschetz pencil of hyperplane sections on a nice surface over a number field $K$. We fix an embedding $\overline{K}\hookrightarrow \mathbb{C}$ at the outset. Denote by $\mathcal{Z}\subset \mathbb{P}^{1}$ the finite subset parametrising the critical (nodal) fibres and write $\mathcal{U}=\mathbb{P}^{1}\setminus \mathcal{Z}$. Denote by $\mathcal{F}:=\mathrm{R}^{1}\pi_{\star}\mu_{\ell}$, the first derived pushforward sheaf on $\mathbb{P}^{1}$ and let $\overline{\eta}\rightarrow \mathbb{P}^{1}$ be a geometric generic point. Let $z\in \mathcal{Z}$. Consider the strictly Henselian ring $\widehat{\mathcal{O}}_{\mathbb{P}^{1}, z}$. By \cite[Proposition 4.10]{milnelec}, it can be understood as the elements of $$\overline{K}[[t-z]]\cap \overline{K(t)},$$
i.e., those power series in $t-z$ which are algebraic over $\overline{K}(t)$. Let $\mathtt{K}_z$ denote a separable closure of the field of fractions of $\widehat{\mathcal{O}}_{\mathbb{P}^{1}, z}$. After \cite[\S 20]{milnelec}, we know that the choice of an embedding $\mathtt{K}_{z}\hookrightarrow \overline{K(t)}$ determines the cospecialisation morphism $$\phi_{z}:\mathcal{F}_{z}\hookrightarrow \mathcal{F}_{\overline{\eta}}.$$
In particular, this choice is the \'etale analogue of a path or `\textit{chemin}'.
We begin with the following. 
\begin{definition}[Puiseux series]
    Let $\mathbb{K}$ be a field. A formal \textit{Puiseux series} $f(t)$ over $\mathbb{K}$ in the variable $t$ is an expression of the form
    $$
    f(t)=\sum_{j\geq M}^{\infty}a_{j}t^{j/n}
    $$
     for some $M\in \mathbb{Z}$, $n\in \mathbb{Z}_{>0}$ and $a_{j}\in \mathbb{K}$. The field of formal Puiseux series is denoted $\mathbb{K}\langle \langle t \rangle \rangle$. In particular, we have
     $$
     \mathbb{K}\langle \langle t\rangle\rangle=\bigcup_{n=1}^{\infty}\mathbb{K}((t^{1/n})),
     $$
     where $\mathbb{K}((t))$ is the field of formal Laurent series in $t$ with coefficients in $\mathbb{K}$. It is a classical result that if $\mathbb{K}$ is algebraically closed of characteristic zero, then $\mathbb{K}\langle \langle t \rangle \rangle$ is the algebraic closure of $\mathbb{K}((t))$.
\end{definition}

We notice that the field $\overline{K}\langle \langle t-z \rangle \rangle$ of Puiseux series in $t-z$,  contains both $\mathtt{K}_{z}$ and a copy of $\overline{K(t)}$, so we seek to fix the stated embedding therein. We are only concerned with the finite field extension $\mathbf{K}$ of $K(t)$ that all the points of $\mathrm{Pic}^{0}(\mathcal{X}_{\overline{\eta}})[\ell]$ are defined over. It is the splitting field of the $\ell$ -- division ideal $\prescript{(\ell)}{}{}\mathcal{I}$ of $\mathrm{Pic}^{0}(\mathcal{X}_{\overline{\eta}})$ computed in Section~\ref{subsec:gen}. We observe
\begin{equation}
\label{eqn:gl}
[\mathbf{K}: K(t)]\leq \#\mathrm{GL}(2g, \mathbb{F}_{\ell}),
\end{equation}
where $g$ is the genus of $\mathcal{X}_{\overline{\eta}}$. Therefore, we may write $\mathbf{K}=K(t)\left(\bm{\tau}\right)$, where $\bm{\tau}$ is a primitive element for $\mathbf{K}/K(t)$. By (\ref{eqn:gl}), we may assume $\bm{\tau}$ has a minimal polynomial $\mu(x)$ with coefficients in $K(t)$, of degree bounded by a polynomial in $\ell$. The height of the coefficients can also be assumed to be bounded by a polynomial in $\ell$ by Section~\ref{app:height}. In order to fix an embedding $\mathbf{K}\hookrightarrow \overline{K}\langle \langle t-z \rangle \rangle$, we simply pick a Puiseux series expansion $\lambda_z$ of $\bm{\tau}$ in $t-z$, as a root of $\mu(x)$. This is made possible using the following classical theorem-algorithm due to Newton and Puiseux.
\begin{theorem}[Newton-Puiseux]
\label{thm:ntpu}
    Let $\mu(x, t)=0$ be a curve in $\mathbb{C}^{2}$. Let $d_x$ be the degree of $\mu$ in the variable $x$. Then, around any $u\in \mathbb{C}$, there exist $d_x$ many Puiseux expansions
    $$
    x_i(t)=\sum_{j\geq M}^{\infty} \alpha_{i, j}(t-u)^{j/N}
    $$
    satisfying $\mu(x_i, t)=0$. Each $x_i(t)$ converges for values of $t$ in an open neighbourhood of $u$. Moreover, given a positive integer $m$, there exists an algorithm that outputs the first $m$ coefficients of all the expansions of $x_i$ in time $$\mathrm{poly}(d_x\cdot m).$$
\end{theorem}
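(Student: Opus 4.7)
The plan is to prove this via the classical Newton polygon method of Newton and Puiseux, carefully tracking complexity. After the translation $t \mapsto t - u$, we may assume $u = 0$ and seek Puiseux expansions of $x$ in the variable $t$ at the origin.

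The key tool is the Newton polygon $\mathcal{N}(\mu)$ of $\mu(x,t) = \sum a_{ij} x^i t^j$, defined as the lower convex hull of the support $\{(i,j) : a_{ij} \neq 0\}$. Each segment of its lower boundary with negative slope $-p/q$ (written in lowest terms) corresponds to a family of branches with leading exponent $p/q$; the admissible leading coefficients $\alpha$ are the nonzero roots of the characteristic polynomial $\chi(T) = \sum_{(i,j) \text{ on segment}} a_{ij} T^i$. For each such $(p/q, \alpha)$, I would perform the change of variables $t = s^q$ and $x = s^p(\alpha + y)$, substitute into $\mu$, divide out the common power of $s$, and recurse on the resulting polynomial $\mu_1(y, s)$ with $\mu_1(0,0) = 0$. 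Iterating this construction produces a tree of substitutions, whose leaves correspond to the distinct Puiseux branches. A standard counting argument yields exactly $d_x$ branches in total: the horizontal width of each Newton polygon segment, weighted by the multiplicities of the nonzero roots of the corresponding $\chi$, equals the number of branches that segment spawns, and these widths sum to $d_x$. The convergence of each resulting Puiseux series in an open neighbourhood of $0$ follows from the classical Puiseux theorem: after the base change $t = s^N$ with $N$ an appropriate ramification index, each branch becomes an analytic solution of $\mu(x, s^N) = 0$ near a regular point, and the implicit function theorem furnishes the disc of convergence.

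The main obstacle is the quantitative claim that the first $m$ coefficients across all $d_x$ branches can be extracted in time polynomial in $d_x \cdot m$. Here I would argue level by level in the recursion tree. At each level, the sum over branches of the $y$-degree of the local $\mu_k$ is bounded by $d_x$, since each substitution preserves or decreases this sum (branches fork only at the cost of splitting the degree budget between them). Hence the aggregate arithmetic per level, which consists of assembling each Newton polygon, forming each $\chi$, extracting its roots, and performing the substitution, costs $\mathrm{poly}(d_x, m)$ using fast polynomial arithmetic in degrees bounded by $d_x$ in $y$ and $O(m)$ in $s$, together with root-finding in degree at most $d_x$. To reach $m$ fractional coefficients per branch, $O(m)$ levels of recursion suffice, because each level contributes at least one additional coefficient per branch being refined. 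Summing over levels yields the total cost $\mathrm{poly}(d_x \cdot m)$ claimed. For the fine bookkeeping, in particular handling clustered branches that share long common prefixes and bounding the growth of the ramification index $N$, I would appeal to the refined complexity analyses of Duval and of Poteaux and Rybowicz, which establish precisely this bound for the Newton-Puiseux algorithm.
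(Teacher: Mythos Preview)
Your proposal is correct, and in fact more detailed than what the paper provides: the paper's own proof consists entirely of two citations, to Wall for the existence of the $d_x$ convergent Puiseux branches and to Walsh for the algorithmic complexity bound. Your sketch of the Newton polygon recursion is exactly the classical argument that Wall records, and your complexity outline is sound in structure, with the genuinely delicate points (clustered branches sharing long prefixes, growth of the ramification index, and the bit-complexity of the algebraic coefficients) correctly flagged and deferred to the literature.

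The one inaccuracy worth noting is the sentence ``each level contributes at least one additional coefficient per branch being refined.'' This is false precisely in the repeated-root case you later acknowledge: when the characteristic polynomial $\chi$ has a root of multiplicity greater than one, several recursion levels may be needed before the branches separate and a new coefficient is determined. The correct bound is that the total number of such ``stationary'' levels across the whole tree is controlled by $d_x$, not that each individual level makes progress. Since you explicitly hand this off to Duval and Poteaux--Rybowicz, the argument still closes; just be aware that the na\"ive per-level claim does not stand on its own. The paper's citation of Walsh covers the same ground with a different reference, so the two treatments are equivalent in force.
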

\begin{proof}
    For the existence, see \cite[Theorem 2.1]{wall}. The algorithm with stated complexity is from \cite[Theorem 1]{walsh}.
\end{proof}
\begin{remark}
We see that if $\lambda(t)=\sum_j \alpha_j t^{j/M}$ is an algebraic Puiseux series as a solution of $\mu(x,t)=0$, so are its conjugates $\sum_j \alpha_j\zeta_{M}^{ij}t^{j/M}$, for $\zeta_M$ a primitive $M^{\mathrm{th}}$ -- root of unity and $0\leq i< M$.
    We note that there is no ambiguity in the function defined by a Puiseux series, as the function $t^{1/M}$ refers locally to a unique branch of the $M^{\mathrm{th}}$ -- root function, and the other branches are given as conjugates by $\zeta_M^i$. Specifically, for $w$ a nonzero complex number written as $w=(r, \psi)$ in polar form, where $r\in \mathbb{R}_{>0}$ and $0\leq \psi <2\pi$, we have $w^{1/M}=(r^{1/M}, \psi/M)$, corresponding to the principal branch.
\end{remark}
So, for each $z\in \mathcal{Z}$, we use Theorem~\ref{thm:ntpu} to write $\bm{\tau}$ as a Puiseux series in $t-z$, after making a  choice of the series expansion to use. Essentially, this identifies $\bm{\tau}$ with a root of $\mu(x)$ over $\overline{K}\langle \langle t-z\rangle \rangle$.

\hfill

As stated earlier, this choice of embedding $\mathbf{K}\hookrightarrow \overline{K}\langle \langle t-z\rangle \rangle$ determines completely the cospecialisation map $\phi_{z}: \mathcal{F}_{z}\hookrightarrow \mathcal{F}_{\overline{\eta}}$. Following work of Igusa (Theorem~\ref{thm:igred}) we know that the elements of $\mathcal{F}_{z}$ can be identified as those solutions of  the $\ell$ -- torsion ideal $\prescript{(\ell)}{}{} \mathcal{I}_{\overline{\eta}}$ of $\mathrm{Pic}^{0}(\mathcal{X}_{\overline{\eta}})$ as a zero-dimensional ideal over $\overline{K}(t)$, which are in fact rational over $\overline{K}((t-z))$. The other elements of $\mathcal{F}_{\overline{\eta}}$ can be represented using rational function expressions in $\bm{\tau}$, which has, in turn, been identified with the Puiseux series $\lambda_z$ using our embedding. We sum up our efforts in Algorithm~\ref{algo:cospec}.

\begin{algorithm}
    \caption{\texttt{Computing a cospecialisation map at a singular point}}
    \label{algo:cospec}
     \begin{itemize}
        \item \textbf{Input:} A singular fibre $\mathcal{X}_{z}$ of the Lefschetz pencil $\pi:\mathcal{X}\rightarrow \mathbb{P}^{1}$ for a fixed $z\in \mathcal{Z}$.
        
        \item \textbf{Output:} The elements of $\mathrm{Pic}^{0}(\mathcal{X}_{\overline{\eta}})[\ell]$ represented as ${\overline{K(t)}}$ -- rational points in a projective space $\mathbb{P}^{M}$ using convergent Puiseux series around $z$.
    \end{itemize}

    \begin{algorithmic}[1]

    \STATE \label{algo:cospec1} Compute the $\ell$ -- division ideal $\prescript{(\ell)}{}{}{\mathcal{I}_{\overline{\eta}}}$  of $\mathrm{Pic}^{0}(\mathcal{X}_{\overline{\eta}})$ using Algorithm~\ref{algo:genelldiv}. 

    \STATE \label{algo:cospec2} Represent the $\ell^{2g}$ solutions of $\prescript{(\ell)}{}{}{\mathcal{I}_{\overline{\eta}}}$ over $\overline{K(t)}$ using a primitive element $\bm{\tau}$ and a zero-dimensional system solving algorithm such as \cite{rouillier}. In particular, an element $\gamma$ of $\mathrm{Pic}^{0}(\mathcal{X}_{\overline{\eta}})[\ell]$ is represented as a point in $\mathbb{P}^{M}$ with its coordinates being rational functions in $\bm{\tau}$ with coefficients from a $\mathrm{poly}(\ell)$ -- degree extension of $K$. 

    \STATE \label{algo:cospec3} Expand $\bm{\tau}$ as a Puiseux series $\lambda_z$ around $z$ using the algorithm from Theorem~\ref{thm:ntpu}, upto $\mathrm{poly}(\ell)$ precision.  Similarly rational functions in $\bm{\tau}$ also have convergent Puiseux series representations. This identifies each $\gamma$ uniquely by Lemma~\ref{lem:puibound}.

    \STATE \label{algo:cospec4} Return a representation of each $\gamma$ as a tuple $$[X_{0}^{(\gamma)}(t):\ldots : X_{M}^{(\gamma)}(t)],$$ where $X_{i}^{(\gamma)}(t)$ are Puiseux series in $t-z$.

\end{algorithmic}
\end{algorithm}

\begin{remark}
    By Theorem~\ref{thm:ntpu}, all the Puiseux expansions $X_i^{(\gamma)}(t)$ converge for all $t$ in a neighbourhood of $z$. In other words, they all converge for $\vert t-z\vert < \varepsilon_z$, where $\varepsilon_z\in \mathbb{R}_{>0}$ is the minimum of the radii of convergence of all the $X_{i}^{(\gamma)}(t)$. 
\end{remark}

\begin{lemma}
\label{lem:puibound}
    It suffices to specify
$$
\mathrm{poly}(\ell)
$$
coefficients of the Puiseux expansion of each $\gamma\in \mathcal{F}_{\overline{\eta}}$ around $z\in \mathcal{Z}$, in order to identify it uniquely. Further, the Weil height of each coefficient is bounded by a polynomial in $\ell$.
\end{lemma}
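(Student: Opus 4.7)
The plan is to reduce the lemma to two standard facts about algebraic Puiseux series over a number field: that a root of a bounded-degree polynomial is determined by a bounded number of initial Puiseux coefficients, and that these coefficients have polynomially bounded Weil height. Both rest on the observation, established just before the lemma, that the extension $\mathbf{K}/K(t)$ has degree at most $\#\mathrm{GL}(2g, \mathbb{F}_{\ell}) = \mathrm{poly}(\ell)$ and its primitive element $\bm{\tau}$ admits a minimal polynomial $\mu(x) \in K(t)[x]$ whose $x$-degree, $t$-degree and coefficient heights are all $\mathrm{poly}(\ell)$ by Section~\ref{app:height}.

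For the uniqueness claim, I would first fix a coordinate $X_i^{(\gamma)}(t)$ of a torsion point $\gamma \in \mathcal{F}_{\overline{\eta}}$. Since this coordinate is a rational function in $\bm{\tau}$ with coefficients from a degree-$\mathrm{poly}(\ell)$ extension of $K$, it is algebraic over $K(t)$ with some minimal polynomial $\nu_i(x,t) \in K(t)[x]$ of $x$-degree, $t$-degree and coefficient height all $\mathrm{poly}(\ell)$; such a $\nu_i$ can be produced by a resultant computation against $\mu$. The Puiseux expansion of $X_i^{(\gamma)}(t)$ is one of the roots $\lambda_1,\ldots,\lambda_{\deg_x \nu_i}$ of $\nu_i$ in $\overline{K}\langle\langle t-z\rangle\rangle$, and from
\[
\mathrm{disc}_x(\nu_i) \;=\; \prod_{j<k}(\lambda_j-\lambda_k)^2,
\]
one has $v_{t-z}(\lambda_j-\lambda_k) \leq v_{t-z}(\mathrm{disc}_x \nu_i) \leq \deg_t(\mathrm{disc}_x \nu_i) = \mathrm{poly}(\ell)$ for any two distinct roots. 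Thus knowing the first $\mathrm{poly}(\ell)$ Puiseux coefficients of $X_i^{(\gamma)}(t)$ pins it down uniquely among the roots of $\nu_i$; distinct torsion points $\gamma \neq \gamma'$ differ in some coordinate, so the same bound pins down $\gamma$ itself.

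For the height bound, the plan is to combine the effective Eisenstein theorem for algebraic power series with an archimedean radius-of-convergence estimate. The effective Eisenstein theorem, applied to $\nu_i$, produces integers $N_0, N_1$ of $\mathrm{poly}(\ell)$ bit-length such that $N_0 N_1^j \alpha_j$ is an algebraic integer in a number field of degree $\mathrm{poly}(\ell)$, where $\alpha_j$ is the coefficient of $(t-z)^{j/N}$; this controls all non-archimedean contributions to the Weil height. For the archimedean contributions, fix an embedding $\overline{K}\hookrightarrow\mathbb{C}$ and apply Cauchy's estimate: a lower bound $r$ on the radius of convergence yields $|\alpha_j|_v \leq C\,r^{-j}$ at each archimedean place $v$, where the lower bound on $r$ comes from the minimum separation of the roots of $\nu_i(x,t)=0$ as $t$ ranges over a small disc around $z$, which is at least $\mathrm{poly}(\ell)^{-1}$ by the explicit height bounds of Section~\ref{app:height}. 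Summing the two contributions over the $\mathrm{poly}(\ell)$ relevant indices $j$ then yields a Weil height of $\mathrm{poly}(\ell)$ per coefficient.

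The hard part will be the archimedean lower bound on the radius of convergence, since this requires simultaneously controlling the separation of the critical points in $\mathcal{Z}$ and of the roots of $\nu_i(x,t)=0$ under the fixed embedding $\overline{K}\hookrightarrow\mathbb{C}$, all in terms of $\mathrm{poly}(\ell)$. Without such an effective lower bound, Eisenstein's theorem alone controls only the non-archimedean part and the claimed polynomial Weil-height bound degenerates into a merely finite one.
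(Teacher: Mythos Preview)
Your proposal is correct and is essentially an unpacking of the paper's proof-by-citation: the paper simply invokes \cite[pg.~3]{walsh} and \cite[Theorem~4.5]{hilliker} for the branch-separation bound and \cite[Theorem~1]{walsh} for the coefficient heights, and the discriminant-valuation argument together with an effective Eisenstein/Cauchy estimate is precisely the content of those references. Your identified ``hard part''---the archimedean lower bound on the radius of convergence---is not handled inside this lemma in the paper either; it is deferred to the next lemma (Lemma~\ref{lem:radconv}), where it is obtained directly from \cite[Corollary~4.6]{hickel}, so you need not fold it into the present argument.
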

\begin{proof}
    The first statement follows from \cite[pg 3]{walsh}.( See also \cite[Theorem 4.5]{hilliker}). The bound for the height of the coefficients is provided by \cite[Theorem 1]{walsh}.
    
\end{proof}

\begin{remark}
    We `store' an algebraic number $\alpha$, by a pair consisting of its minimal polynomial and a floating-point approximation, to distinguish $\alpha$ from its conjugates. 
\end{remark}
We next note the following.

\begin{lemma}[Radius of convergence]
    \label{lem:radconv}
  There exists a polynomial $\Psi(x)\in \mathbb{Z}[x]$, with coefficients and degree independent of $\ell$, such that the common radius of convergence $\varepsilon_z$ satisfies

  $$
  \varepsilon_{z}>\frac{1}{\mathrm{exp}\left(\Psi(\ell)\right)}.
  $$
  
\end{lemma}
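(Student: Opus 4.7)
The plan is to relate $\varepsilon_z$ to a geometric quantity attached to the defining polynomial $\mu(x)\in K(t)[x]$ of the primitive element $\bm{\tau}$, and then to control that quantity by explicit height-and-degree bookkeeping together with an effective root-separation inequality.

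First, I would recall that a Puiseux expansion of a branch of the algebraic function defined by $\mu(x,t)=0$ around $z$ converges on the open disc whose radius is exactly the distance from $z$ to the nearest \emph{other} branch point of the function. The branch points are precisely the roots of the discriminant $\Delta(t):=\mathrm{disc}_{x}(\mu)(t)\in K[t]$, together with the finite locus where the leading $x$-coefficient of $\mu$ vanishes; call this finite set $\mathcal{S}\subset \overline{K}$. Since each coordinate $X_{i}^{(\gamma)}(t)$ is a rational function in $\bm{\tau}$, its radius of convergence is bounded below by that of the Puiseux expansion of $\bm{\tau}$ itself, so
\begin{equation*}
  \varepsilon_z \;\geq\; \min_{z'\in\mathcal{S}\setminus\{z\}}\,|z-z'|.
\end{equation*}
It therefore suffices to lower bound this minimum distance by $\exp(-\Psi(\ell))$.

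Next, I would bound the combinatorial data of $\Delta$. By~(\ref{eqn:gl}), $\deg_x(\mu)\leq \#\mathrm{GL}(2g,\mathbb{F}_\ell)$ is polynomial in $\ell$, and as noted in Section~\ref{subsec:cospec} the coefficients of $\mu$ viewed in $K(t)$ have both $t$-degrees and Weil heights bounded polynomially in $\ell$; this rests on the height theory of torsion points recapped in Section~\ref{app:height} (in particular Theorem~\ref{thm:tors_height}) applied to the $\ell$-division ideal computed by Algorithm~\ref{algo:genelldiv}. Standard inequalities for the degree and height of a discriminant (it is a polynomial of degree $2d-2$, with $d=\deg_{x}\mu$, in the coefficients of $\mu$) then yield $\deg_t\Delta=\mathrm{poly}(\ell)$ and logarithmic height of $\Delta$ also $\mathrm{poly}(\ell)$. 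Clearing denominators and taking the field norm from the number field generated by the coefficients down to $\mathbb{Q}$, I replace $\Delta$ by a nonzero $\widetilde\Delta\in\mathbb{Z}[t]$ whose degree and whose logarithm of Mahler measure remain polynomial in $\ell$, and whose roots still contain all elements of $\mathcal{S}$.

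Finally, I would apply the Mahler--Mignotte root-separation inequality: for any nonzero $P\in\mathbb{Z}[t]$ of degree $D$ and Mahler measure $M(P)$, any two distinct complex roots $\alpha\neq \beta$ of $P$ satisfy
\begin{equation*}
  |\alpha-\beta| \;\geq\; \sqrt{3}\,D^{-(D+2)/2}\,M(P)^{-(D-1)}.
\end{equation*}
Applied to $\widetilde\Delta$ with $D,\log M(\widetilde\Delta)=\mathrm{poly}(\ell)$, this gives $|\alpha-\beta|\geq \exp(-\Psi(\ell))$ for a polynomial $\Psi\in\mathbb{Z}[x]$ of absolute degree and coefficients; specialising to $\alpha=z$ and $\beta$ any other element of $\mathcal{S}$ yields the stated bound on $\varepsilon_z$.

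The main obstacle is purely the height bookkeeping: one must verify that every intermediate quantity---the $t$-degree and coefficient heights of $\mu$, the corresponding quantities for $\Delta$, and finally the degree and Mahler measure of its integer-coefficient clearance $\widetilde\Delta$---each remain polynomial in $\ell$, with degree-of-polynomial independent of $\ell$. All of this is controlled by the torsion-point height bounds of Appendix~\ref{app:height}; given those inputs, everything else is standard manipulation of resultants, discriminants, and norms.
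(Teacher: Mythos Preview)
Your approach is correct in outline and genuinely different from the paper's. The paper proceeds analytically: it invokes a coefficient bound due to Hickel (\cite[Corollary~4.6]{hickel}) to show directly that the $j$-th Puiseux coefficient of each $X_i^{(\gamma)}$ satisfies $|\alpha_{i,j}^{(\gamma)}| \leq \exp(\Psi(\ell)\cdot j)$, and then reads off the radius of convergence from the Cauchy--Hadamard formula $1/\varepsilon_z = \limsup_j |\alpha_{i,j}^{(\gamma)}|^{1/j}$. Your route is more geometric: identify $\varepsilon_z$ with (a lower bound on) the distance from $z$ to the nearest other branch point of the covering, and then lower-bound that distance by an effective root-separation inequality applied to the discriminant. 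Both arguments rest on the same height inputs from Section~\ref{app:height}; yours has the virtue of being self-contained (Mahler's separation bound is elementary), whereas the paper's is shorter by outsourcing the hard analytic estimate to the literature.

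One point needs patching. Your assertion that ``since each $X_i^{(\gamma)}(t)$ is a rational function in $\bm{\tau}$, its radius of convergence is bounded below by that of $\bm{\tau}$'' is not correct as stated: a rational function $R(\bm{\tau})$ can acquire a pole at a point $t_0$ where the denominator of $R$ vanishes on $\bm{\tau}(t_0)$, and such a $t_0$ may lie strictly inside the disc of convergence of $\bm{\tau}$. The paper sidesteps this by bounding the Puiseux coefficients of the $X_i^{(\gamma)}$ themselves, not of $\bm{\tau}$. In your framework the fix is easy: either run your discriminant-and-separation argument directly on the $\ell$-division ideal $\prescript{(\ell)}{}{}\mathcal{I}_{\overline{\eta}}$ (whose $t$-degrees and coefficient heights are polynomial in $\ell$ by Section~\ref{subsec:gen}), so that $\mathcal{S}$ is the branch locus of the $X_i^{(\gamma)}$ rather than of $\bm{\tau}$; or enlarge $\mathcal{S}$ to include the zeros in $t$ of the denominators, which again have $\mathrm{poly}(\ell)$-bounded degree and height, and then your Mahler bound still applies.
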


\begin{proof}
    Denote by $$\left(X^{(\gamma)}_{i}(t)\right)_{\gamma \in \mathcal{F}_{\overline{\eta}}}$$ the system of Puiseux expansions one obtains for the elements of $\mathcal{F}_{\overline{\eta}}$ around $z$. In particular, they are Laurent series in $\bm{t}=(t-z)^{1/M}$ for some $M$ bounded by a polynomial in $\ell$. Write 
    $$
    X^{(\gamma)}_{i}(t)=\sum_{j}\alpha^{(\gamma)}_{i, j}\bm{t}^{j}.
    $$
It converges on a disc $\vert \bm{t} \vert < \varepsilon_z $    where 

$$
\frac{1}{\varepsilon_z}=\limsup_{j\rightarrow \infty}{\vert \alpha^{(\gamma)}_{i, j} \vert^{\frac{1}{j}}}.
$$

Applying \cite[Corollary 4.6]{hickel} \footnote{see also Theorem 2.3 of loc. cit.}, we see that
$$
\vert \alpha^{(\gamma)}_{i, j} \vert \leq \mathrm{exp}\left( \Psi(\ell)\cdot j \right),
$$
where $\Psi(x)$ is a polynomial with coefficients and degree independent of $j$ and $\ell$. Taking the limit gives the result.

\end{proof}

\subsection{Specialisation to a smooth fibre}
\label{subsec:spec}

Consider the setup of Section~\ref{subsec:cospec}. Let $z\in \mathcal{Z}$. In this subsection, we indicate how we may specialise elements of $\mathcal{F}_{\overline{\eta}}$ realised as Puiseux expansions around $z$ using Algorithm~\ref{algo:cospec}, to elements of $\mathrm{Pic}^{0}(\mathcal{X}_{u_{z}})[\ell]$ for a `nearby' smooth fibre $\mathcal{X}_{u_{z}}$. We recall the following.
\begin{lemma}
\label{lem:isoco}
    Let $u\in \mathcal{U}$. Then, any cospecialisation map
    $$
    \phi_{u}: \mathcal{F}_{u}\rightarrow \mathcal{F}_{\overline{\eta}}
    $$
    is an isomorphism. Its inverse $\phi_{u}^{-1}$ associates a divisor in $\mathcal{F}_{\overline{\eta}}$ to the intersection with $\mathcal{X}_{u}$ of its closure in $\mathcal{X}$.
\end{lemma}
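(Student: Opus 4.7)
The plan has two parts, one abstract and one concrete. First, to show that $\phi_u$ is an isomorphism, I would invoke the proper smooth base change theorem (see e.g.\ \cite[VI.4]{milne80}). By construction of the Lefschetz pencil, the restriction $\pi|_{\pi^{-1}(\mathcal{U})}:\pi^{-1}(\mathcal{U})\to \mathcal{U}$ is proper and smooth, and $\ell$ is invertible on $\mathcal{U}$. Hence $\mathcal{F}|_{\mathcal{U}}=(R^{1}\pi_{\star}\mu_{\ell})|_{\mathcal{U}}$ is a locally constant constructible sheaf on $\mathcal{U}$, as was already noted in Section~\ref{subsec:etsurf}. For a locally constant sheaf on a connected scheme, any \'etale path between two geometric points, i.e.\ any embedding of strictly henselian local rings of the kind described in Section~\ref{subsec:cospec}, induces a canonical isomorphism of stalks. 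The cospecialisation map $\phi_u$ is by definition this isomorphism for the chemin connecting $u$ to $\overline{\eta}$.

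Second, for the explicit description of $\phi_u^{-1}$, I would use the identifications $\mathcal{F}_{u}\simeq \mathrm{H}^{1}(\mathcal{X}_{u},\mu_{\ell})\simeq \mathrm{Pic}^{0}(\mathcal{X}_{u})[\ell]$ and $\mathcal{F}_{\overline{\eta}}\simeq \mathrm{Pic}^{0}(\mathcal{X}_{\overline{\eta}})[\ell]$ arising from the Kummer sequence on each fibre. Passing to the relative setting, the Picard scheme $\mathrm{Pic}^{0}_{\mathcal{X}/\mathbb{P}^{1}}|_{\mathcal{U}}$ is a smooth abelian scheme over $\mathcal{U}$, and its $\ell$-torsion is a finite \'etale $\mathcal{U}$-scheme whose geometric fibres recover the two sides above. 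The cospecialisation map is then the one induced on geometric points by this \'etale cover, and in particular it is an isomorphism. Geometrically, given a divisor class $[D]\in \mathrm{Pic}^{0}(\mathcal{X}_{\overline{\eta}})[\ell]$ represented by a divisor $D$ on $\mathcal{X}_{\overline{\eta}}$, one takes the Zariski closure $\overline{D}$ inside $\mathcal{X}$ (a Weil divisor on the total space, because $\pi$ is flat and $D$ is codimension one on a codimension-one subscheme) and pulls it back to the fibre $\mathcal{X}_{u}$, which is a Cartier divisor on $\mathcal{X}$ cut out by a local parameter of $\mathbb{P}^1$ at $u$. This yields a divisor $\overline{D}\cdot \mathcal{X}_u$ on $\mathcal{X}_u$ of degree zero whose class is $\phi_u^{-1}([D])$.

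The main obstacle will be verifying that this geometric recipe is well-defined on $\mathrm{Pic}^{0}[\ell]$. Specifically, one must check that (i) the closure of a principal divisor $\mathrm{div}(f)$ on $\mathcal{X}_{\overline{\eta}}$ restricts to a principal divisor on $\mathcal{X}_u$ (which follows from flatness of $\pi$, since $f$ spreads out to a rational function on an \'etale neighbourhood of $u$), and (ii) the restriction has degree zero and is $\ell$-torsion. Both statements reduce, via the identification with the \'etale cover $\mathrm{Pic}^{0}_{\mathcal{X}/\mathbb{P}^{1}}[\ell]|_{\mathcal{U}}\to \mathcal{U}$, to the compatibility of the Kummer sequence with proper smooth base change, and the compatibility of closure-and-intersect with the specialisation map on the Picard scheme. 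These are standard; once in place, the description of $\phi_u^{-1}$ in the statement of the lemma is immediate.
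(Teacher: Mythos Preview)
Your approach is correct and matches the paper's: the paper's proof consists of a single sentence observing that the first statement follows from $\mathcal{F}\vert_{\mathcal{U}}$ being locally constant, with a reference to \cite{milne80}. You invoke the same fact (via proper smooth base change) and spell out why locally constant implies that cospecialisation is an isomorphism, which is exactly the content behind that sentence.

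Your second paragraph, justifying the explicit description of $\phi_u^{-1}$ via closure-and-intersection on the relative Picard scheme, actually goes beyond the paper: the paper's proof does not address the second sentence of the lemma at all. Your argument there is sound and is the standard way to unwind the cospecialisation on $\mathrm{Pic}^{0}[\ell]$ in terms of divisors.
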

\begin{proof}
  The first statement follows from the fact that $\mathcal{F}\vert_{U}$ is a locally constant sheaf on $U$. See \cite{milne80} for more details.
\end{proof}

Now, consider again the splitting field $\mathbf{K}$ of $\prescript{(\ell)}{}{}\mathcal{I}_{\overline{\eta}}$. Under the natural embedding $\overline{K}(t)\hookrightarrow \overline{K}((t-u))$, we know that the elements of $\mathrm{Pic}^{0}(\mathcal{X}_{\overline{\eta}})[\ell]$ are rational over $\overline{K}((t-u))$ as the $\ell$ -- torsion of the generic fibre is unramified at $u$. We observe the following next.

% \begin{lemma}
% \label{lem:unco}
%     The specialisation $\phi_{u}^{-1}: \mathcal{F}_{\overline{\eta}}\rightarrow \mathcal{F}_{u}$ is unique and does not depend on a $\overline{K}(t)$ -- linear embedding $\overline{K(t)}\hookrightarrow \overline{K((t-u))}$.
% \end{lemma}
% \begin{proof}
%     Consider the Jacobian $\mathcal{J}_{\overline{\eta}}=\mathrm{Pic}^{0}(\mathcal{X}_{\overline{\eta}})$ of $\mathcal{X}_{\overline{\eta}}$. It is an abelian variety defined over $K(t)$, and can be thought of as being defined over $K((t-u))$ in a natural way. Specialising $\mathcal{J}_{\overline{\eta}}$ to $u$ gives the Jacobian $\mathcal{J}_{u}$ of $\mathcal{X}_{u}$ by \cite{igusa}. Then, by a generalisation of Hensel's lemma \cite[Corollary pg 546]{chowhensel}, we know each $\ell$ -- torsion point $\rho_j$ of $\mathcal{J}_u$ lifts uniquely to an $\ell$ -- torsion point $\omega_j$ of $\mathcal{J}_{\overline{\eta}}$ and $\rho_j$ is the specialisation at $u$ of $\omega_j$.
% \end{proof}
\begin{lemma}
\label{lem:pairpres}
    Any specialisation $\phi_{u}^{-1}$ preserves the Weil pairing, i.e., for any $\gamma_1, \gamma_2 \in \mathcal{F}_{\overline{\eta}}$, we have
    $$
    \langle \gamma_{1}, \gamma_{2} \rangle = \langle \phi_{u}^{-1}(\gamma_1), \phi_{u}^{-1}(\gamma_2) \rangle,
    $$
    where the pairing on the left is the Weil pairing on $\mathrm{Pic}^{0}(\mathcal{X}_{\overline{\eta}})[\ell]$ and the one on the right is the Weil pairing on $\mathrm{Pic}^{0}(\mathcal{X}_{u})[\ell]$.
\end{lemma}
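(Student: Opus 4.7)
The plan is to deduce this from the fact that the Weil pairing arises from a morphism of étale sheaves on $\mathcal{U}$, combined with the naturality of cospecialisation. I would first recall the cohomological description: for a smooth projective curve $C$ of genus $g$ over an algebraically closed field of characteristic coprime to $\ell$, Kummer theory identifies $\mathrm{Pic}^{0}(C)[\ell]$ with $\mathrm{H}^{1}(C,\mu_{\ell})$, and under this identification the Weil pairing is (up to sign) the composition of the cup product $\mathrm{H}^{1}(C,\mu_{\ell})\otimes \mathrm{H}^{1}(C,\mu_{\ell})\to \mathrm{H}^{2}(C,\mu_{\ell}^{\otimes 2})$ with the trace isomorphism $\mathrm{H}^{2}(C,\mu_{\ell}^{\otimes 2})\cong \mu_{\ell}$ (see \cite[V.2]{milne80}).

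Next I would promote this to a relative statement. Applying $\mathrm{R}^{i}\pi_{\star}$ and the relative cup product gives a morphism of étale sheaves $\mathcal{F}\vert_{\mathcal{U}}\otimes \mathcal{F}\vert_{\mathcal{U}}\to \mu_{\ell}$ on $\mathcal{U}$ that computes the fibrewise Weil pairing. Since $\mathcal{F}\vert_{\mathcal{U}}$ is locally constant (Section~\ref{subsec:etsurf}) and $\mu_{\ell}$ is already a constant sheaf on $\mathcal{U}$, the cospecialisation $\phi_{u}:\mathcal{F}_{u}\to \mathcal{F}_{\overline{\eta}}$ is the transition isomorphism inherent to the local system structure, and the analogous cospecialisation on $\mu_{\ell}$ is the identity. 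Any morphism of sheaves commutes with its induced stalk maps along such cospecialisations; applied to the pairing morphism this yields a commutative square whose top row is $\langle \cdot, \cdot \rangle_{u}$, bottom row is $\langle \cdot, \cdot \rangle_{\overline{\eta}}$, left vertical is $\phi_{u}\otimes \phi_{u}$, and right vertical is the identity on $\mu_{\ell}$. Evaluating at $\phi_{u}^{-1}(\gamma_{1})\otimes \phi_{u}^{-1}(\gamma_{2})$ and chasing around the square gives the claimed identity.

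The main obstacle is verifying that the cohomological cup-product pairing really agrees with the divisor-theoretic Weil pairing used in Section~\ref{subsec:pairing}; this agreement is classical (again \cite[V.2]{milne80} or \cite[\S 16.1]{handbook}) but is the only nontrivial ingredient beyond formal functoriality. A more hands-on alternative would be to write $\ell\gamma_{i}=\mathrm{div}(f_{i})$ on $\mathcal{X}_{\overline{\eta}}$, take Zariski closures $\overline{\gamma_{i}}$ in $\mathcal{X}$ with extending rational functions $\widetilde{f}_{i}$, and observe that for $u$ outside a proper closed subset of $\mathcal{U}$ the evaluation $\widetilde{f}_{1}\vert_{\mathcal{X}_{u}}(\overline{\gamma_{2}}\cap \mathcal{X}_{u})$ specialises $f_{1}(\gamma_{2})$ at $t=u$; since the ratio $f_{1}(\gamma_{2})/f_{2}(\gamma_{1})$ lies in the constant group $\mu_{\ell}\subset \overline{K}$, specialisation acts trivially on it, and the two pairings coincide.
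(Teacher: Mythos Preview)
Your proposal is correct and considerably more detailed than the paper's own proof, which reads in its entirety: ``Clear from the definition of specialisation.'' You have supplied two complete arguments---one via the relative cup product on $\mathcal{F}\vert_{\mathcal{U}}$ and naturality of cospecialisation for morphisms of local systems, and one via Zariski closure of divisors and specialisation of the rational functions $f_i$---either of which fully justifies what the paper leaves implicit. The underlying idea is the same in spirit (functoriality of the pairing under specialisation), but your write-up actually spells out why this functoriality holds, whereas the paper treats it as evident.
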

\begin{proof}
    Clear from the definition of specialisation.
\end{proof}

\begin{lemma}
    \label{lem:puiconv}
    Let $\gamma\in \mathcal{F}_{\overline{\eta}}$, and assume we have computed
    $$
    \gamma=[X_{0}^{(\gamma)}(t):\ldots :X_{M}^{(\gamma)}(t)]
    $$
    as a tuple of Puiseux series around $z\in \mathcal{Z}$ (truncated upto $\mathrm{poly}(\ell)$ coefficients so that any two $\gamma_{1}\neq \gamma_2$ in $\mathcal{F}_{\overline{\eta}}$ can be distinguished), with respect to the cospecialisation $\phi_{z}$. Then, for any $u_z\in \mathcal{U}$ with $\vert z-u_z\vert < \varepsilon_z/2$, the tuple representing $\gamma$ converges at $u_z$ to a specialisation  $\phi^{-1}_{u_z}(\gamma)\in \mathrm{Pic}^{0}(\mathcal{X}_{u_z})[\ell]$ of $\gamma$ at $u_z$.
\end{lemma}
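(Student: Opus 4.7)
The plan is to combine the analytic convergence of the Puiseux series on its disc of convergence with the algebraic compatibility of evaluation at a smooth point of the pencil. There are three things to check: convergence of the tuple at $u_z$, membership of the limit in $\mathrm{Pic}^{0}(\mathcal{X}_{u_z})[\ell]$, and identification of this limit with $\phi_{u_z}^{-1}(\gamma)$.

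Convergence is the easy step. Lemma~\ref{lem:radconv} produces a common radius $\varepsilon_z$ for all the series $X_i^{(\gamma)}(t)$, and since $|u_z - z| < \varepsilon_z/2$, fixing the branch of the fractional powers of $t-z$ consistent with the choice of $\lambda_z$ made in Step~\ref{algo:cospec3} of Algorithm~\ref{algo:cospec} yields a well-defined point $[X_0^{(\gamma)}(u_z) : \ldots : X_M^{(\gamma)}(u_z)] \in \mathbb{P}^{M}(\mathbb{C})$ in the limit. For membership, Step~\ref{algo:cospec2} constructed the tuple as a formal Puiseux solution of the equations cutting out $\mathrm{Pic}^{0}(\mathcal{X}_{\overline{\eta}}) \subset \mathbb{P}^{M}$ together with the $\ell$-division ideal $\prescript{(\ell)}{}{}\mathcal{I}_{\overline{\eta}}$. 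All defining polynomials have coefficients in $K(t)$, regular at $u_z \in \mathcal{U}$. Since polynomial substitution commutes with absolutely convergent sums, the limit satisfies the specialised equations, which cut out $\mathrm{Pic}^{0}(\mathcal{X}_{u_z})[\ell]$.

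The identification with $\phi_{u_z}^{-1}(\gamma)$ is the crux. The embedding $\mathbf{K} \hookrightarrow \overline{K}\langle\langle t-z\rangle\rangle$ fixed in Step~\ref{algo:cospec3} realises the coordinates of $\gamma$ as convergent Puiseux functions on $0 < |t-z| < \varepsilon_z$; these collectively trace out the analytic section of the pencil representing $\gamma$ over the punctured disc. By Lemma~\ref{lem:isoco}, $\phi_{u_z}^{-1}(\gamma)$ is the intersection of the closure of this algebraic section with the fibre $\mathcal{X}_{u_z}$, which is precisely the value of the tuple at $t = u_z$. Since the truncation of Lemma~\ref{lem:puibound} already distinguishes distinct Puiseux series, the $\ell^{2g}$ evaluated points are pairwise distinct, so evaluation establishes a bijection onto $\mathrm{Pic}^{0}(\mathcal{X}_{u_z})[\ell]$ agreeing with $\phi_{u_z}^{-1}$.

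The main obstacle will be making this last step rigorous: one must argue that the analytic specialisation via Puiseux substitution matches the \'etale-sheaf-theoretic $\phi_{u_z}^{-1}$ of Lemma~\ref{lem:isoco}. The key observation is that both $\phi_z$ and $\phi_{u_z}^{-1}$ are determined by the same embedding $\mathbf{K} \hookrightarrow \overline{K}\langle\langle t-z\rangle\rangle$ fixed in Algorithm~\ref{algo:cospec} --- the former by restriction to the strict henselisation at $z$, the latter by analytic continuation of sections of the trivialising tame \'etale cover $\mathcal{V} \to \mathcal{U}$ along the straight-line path from $z$ to $u_z$ inside the disc of convergence. Confirming this compatibility reduces to the classical equivalence between algebraic sections of a tame cover and field-theoretic embeddings of its function field.
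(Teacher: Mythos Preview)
Your first two steps (convergence on the disc, membership of the limit in $\mathrm{Pic}^{0}(\mathcal{X}_{u_z})[\ell]$) match the paper essentially verbatim. The identification step, however, is handled quite differently, and the paper's route dissolves precisely the ``main obstacle'' you flag.

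The paper does not invoke Lemma~\ref{lem:isoco} or analytic continuation along the trivialising cover. Instead it argues by Hensel lifting at the smooth point $u_z$: having obtained a limit point $\gamma_{u_z}\in\mathrm{Pic}^{0}(\mathcal{X}_{u_z})[\ell]$, one notes that since $\prescript{(\ell)}{}{}\mathcal{I}_{\overline{\eta}}$ is \'etale over $u_z$, the point $\gamma_{u_z}$ lifts \emph{uniquely} to a tuple $[Y_i(t)]$ of power series in $t-u_z$ solving the system, and this lift is by definition $\phi_{u_z}(\gamma_{u_z})$. But the original Puiseux tuple $[X_i^{(\gamma)}(t)]$, re-expanded around $u_z$, is also such a lift; uniqueness forces the two germs to agree on the overlap, whence $\gamma=\phi_{u_z}(\gamma_{u_z})$. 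This is short and fully rigorous, whereas your closure/intersection route is morally the same but, as you yourself note, harder to pin down --- and Lemma~\ref{lem:isoco} is phrased for divisors on the surface rather than points of the embedded Jacobian in $\mathbb{P}^M$, so its use here would need additional justification.

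One further slip: your claim that distinctness of the truncated Puiseux series forces distinctness of the $\ell^{2g}$ evaluated points does not follow as stated (distinct series can share a value). The correct reason is again \'etaleness of $\prescript{(\ell)}{}{}\mathcal{I}_{u_z}$, which gives $\ell^{2g}$ simple roots and hence a bijection between branches and their values at $u_z$.
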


\begin{proof}
It follows from the convergence properties of the associated Puiseux series (see \cite[2.2]{wall} for more details) that at $u_{z}$, $\gamma$ converges to a root of the zero-dimensional ideal $\prescript{(\ell)}{}{}\mathcal{I}_{u_{z}}$, or in other words, an $\ell$-torsion point $\gamma_{u_{z}}\in \mathrm{Pic}^{0}(\mathcal{X}_{u_{z}})[\ell]$. Now, as $u_{z}$ is a smooth specialisation for the ideal $\prescript{(\ell)}{}{}\mathcal{I}_{\overline{\eta}}$, we may, uniquely Hensel-lift this point $\gamma_{u_{z}}$ to a set of expansions $$\phi_{u_z}(\gamma_{u_{z}})=[Y_{0}(t):\ldots Y_{M}(t)]$$ where $Y_{i}(t)\in \overline{K}((t-u_z))$ converge in neighbourhood $W$ of $u_z$. The uniqueness of the lift of $\gamma_{u_{z}}$ implies that the tuples $[X^{(\gamma)}_{i}(t)]$ and $[Y_{i}(t)]$ represent the same analytic germs \footnote{being  solutions of $\prescript{(\ell)}{}{}\mathcal{I}_{\overline{\eta}}$, which are all distinct and $\ell^{2g}$ in number} on $W\cap \{u\in \mathbb{C} \ \vert \ \vert z-u\vert <\varepsilon_z/2\}$. This proves the claim.

\end{proof}

\begin{remark}
    Having fixed a cospecialisation $\phi_z$ at $z$, one automatically determines cospecialisation morphsisms $\phi_u$ for all $u$ in a neighbourhood of $z$ via the above lemma. We call these \textit{analytically compatible} cospecialisations.
    \end{remark}

We intend to use the above lemma to make the specialisation explicit. It remains to prove $\mathrm{poly}(\ell)$ -- bounds to separate roots of $\prescript{(\ell)}{}{}\mathcal{I}_{u_{z}}$ and derive the level of precision to determine which root it is that the associated expansions of $\gamma$ converge to. We deal with the first item initially, using a classical result from diophantine approximation.

\begin{lemma}
\label{lem:algbound}
  Let $\upsilon_1$ and $\upsilon_2$ be algebraic numbers occurring as roots of a polynomial $f(x)\in K[x]$ of degree $\mathbf{d}$ and height $\mathbf{h}$. Then
    $$
    \vert \upsilon_1 -\upsilon_2 \vert \geq \Gamma(\mathbf{d}, \mathbf{h}):= \frac{\sqrt{3}}{(\mathbf{d}+1)^{(2\mathbf{d}+1)/2}\cdot \mathbf{h}^{\mathbf{d}-1}}.
    $$
\end{lemma}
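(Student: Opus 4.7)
The plan is to invoke Mignotte's classical root separation bound, whose proof proceeds via the discriminant of $f$. First, by replacing $f$ with its squarefree part $f/\gcd(f,f')$ we may assume $f$ has only simple roots; this preserves $\upsilon_1$ and $\upsilon_2$, and the degree and (Weil) height of the new polynomial remain bounded by $\mathbf{d}$ and a quantity polynomial in $\mathbf{h}$, respectively, via standard resultant/subresultant bounds. By clearing denominators and rescaling, one arranges the coefficients to be algebraic integers of norm at least $1$, so that the discriminant $\mathrm{disc}(f) = a_{\mathbf{d}}^{2\mathbf{d}-2}\prod_{r<s}(\upsilon_r-\upsilon_s)^2$ has absolute value at least $1$.

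With $f$ squarefree and integral, one isolates the target pair via
\[
(\upsilon_1 - \upsilon_2)^2 \;=\; \frac{\mathrm{disc}(f)}{a_{\mathbf{d}}^{2\mathbf{d}-2}\,\prod_{\substack{r<s \\ (r,s)\neq(1,2)}}(\upsilon_r-\upsilon_s)^2},
\]
and bounds the denominator from above. This is achieved by recognising $\prod_{r<s}(\upsilon_r-\upsilon_s)$ as the Vandermonde determinant on the roots, applying Hadamard's inequality column-wise, and invoking Landau's inequality $\prod_k \max(1,|\upsilon_k|) \le M(f)/|a_{\mathbf{d}}|$, where $M(f)$ denotes the Mahler measure of $f$. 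Combined with $|\mathrm{disc}(f)|\ge 1$, this produces the standard Mignotte estimate
\[
|\upsilon_1 - \upsilon_2| \;>\; \frac{\sqrt{3}}{\mathbf{d}^{(\mathbf{d}+2)/2}\, M(f)^{\mathbf{d}-1}}.
\]

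Finally, the elementary inequality $M(f) \le \|f\|_2 \le \sqrt{\mathbf{d}+1}\,\mathbf{h}$ converts Mahler measure to height, and combined with the crude majorisation $\mathbf{d}^{(\mathbf{d}+2)/2} \le (\mathbf{d}+1)^{(\mathbf{d}+2)/2}$ this gives
\[
|\upsilon_1 - \upsilon_2| \;>\; \frac{\sqrt{3}}{(\mathbf{d}+1)^{(\mathbf{d}+2)/2}(\mathbf{d}+1)^{(\mathbf{d}-1)/2}\,\mathbf{h}^{\mathbf{d}-1}} \;=\; \frac{\sqrt{3}}{(\mathbf{d}+1)^{(2\mathbf{d}+1)/2}\,\mathbf{h}^{\mathbf{d}-1}} \;=\; \Gamma(\mathbf{d}, \mathbf{h}),
\]
which is the claimed bound. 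The only technical subtlety I anticipate is the passage from coefficients in $K$ to the integral setting: one must interpret $\mathbf{h}$ as the absolute Weil height and use the product formula over the archimedean places of $K$ to preserve the lower bound on $|\mathrm{disc}(f)|$. This bookkeeping is routine and can either be carried out directly or absorbed into a citation of the number-field version of Mignotte's theorem.
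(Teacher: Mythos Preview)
Your argument is correct: this is the standard Mignotte root-separation proof via the discriminant, Hadamard's inequality on the Vandermonde matrix, and the bound $M(f)\le \sqrt{\mathbf{d}+1}\,\mathbf{h}$. The paper itself gives no argument at all and simply cites \cite[Corollary A.2]{bugeaudbook}, which is precisely this result; so you have supplied the proof that the paper only references.
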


\begin{proof}
    See \cite[Corollary A.2]{bugeaudbook}.
\end{proof}

 In our context, $\mathbf{h}$ and $\mathbf{d}$ are both bounded by polynomials in $\ell$. This is because for a smooth $u\in \mathcal{U}$ of bounded height, the $\ell$ -- division system $\prescript{(\ell)}{}{}\mathcal{I}_{u}$ associated to $\mathrm{Pic}^{0}(\mathcal{X}_{u})$ has degree polynomial in $\ell$, and the algebraic numbers occurring as coefficients also have height bounded by a polynomial in $\ell$ (by Theorem~\ref{thm:tors_height}).
 Hence, we may write $$\Gamma(\ell):=\frac{1}{\mathrm{exp}(\Phi(\ell))}\leq \Gamma(\mathbf{d}, \mathbf{h})$$ where $\Phi(x)\in \mathbb{Z}[x]$ is a polynomial with coefficients and degree independent of $\ell$.

\begin{lemma}[Convergence-testing]
\label{lem:convbound}
    Let $\Lambda_1(t)=\sum_{j}\alpha_{j}t^{j/\ell}$ be an algebraic Puiseux series in $t$ occurring in a tuple representing $\gamma\in \mathcal{F}_{\overline{\eta}}$ in the context of Lemma~\ref{lem:puiconv}, around $z=0$ wlog. Write $\Lambda_2(t)=\sum_{j}\zeta_{\ell}^{j}\alpha_{j}t^{j/\ell}$ for its conjugate and let $u$ be an algebraic number of height bounded by a polynomial in $\ell$, with 
    $$\vert u\vert^{1/\ell}<\frac{1}{2\cdot\mathrm{exp}((\Psi(\ell))}$$ 
    
    such that both $\Lambda_1(t)$ and $\Lambda_2(t)$ converge at  $u$ to distinct, conjugate algebraic numbers $\upsilon_1$ and $\upsilon_2$ respectively. Then, it requires at most $\mathrm{poly}(\ell)$ precision to distinguish $\upsilon_1$ from $\upsilon_2$, i.e., to determine which series converges to which number.
\end{lemma}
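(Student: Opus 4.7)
The plan is to combine a Liouville-type separation bound for conjugate algebraic numbers with the explicit exponential growth rate for the Puiseux coefficients established in the proof of Lemma~\ref{lem:radconv}. First, I would note that $\upsilon_1$ and $\upsilon_2$ are roots of a common polynomial of degree and height bounded by polynomials in $\ell$: indeed, every coordinate of any $\gamma\in \mathcal{F}_{\overline{\eta}}$ specialised at $u$ is a root of the corresponding specialisation $\prescript{(\ell)}{}{}\mathcal{I}_u$ of the $\ell$-division ideal, whose degree is $\mathrm{poly}(\ell)$ by Section~\ref{subsec:gen}, and whose coefficients have height $\mathrm{poly}(\ell)$ by Theorem~\ref{thm:tors_height} together with the height assumption on $u$. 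Applying Lemma~\ref{lem:algbound} gives
\[
\vert \upsilon_1 - \upsilon_2\vert \;\geq\; \Gamma(\ell) \;=\; \frac{1}{\exp\bigl(\Phi(\ell)\bigr)},
\]
with $\Phi(x)\in\mathbb{Z}[x]$ of degree and coefficients independent of $\ell$.

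Next, I would use the coefficient bound established in the proof of Lemma~\ref{lem:radconv}, namely $\vert \alpha_j \vert \leq \exp\bigl(\Psi(\ell)\cdot j\bigr)$. Setting $r := \exp\bigl(\Psi(\ell)\bigr)\cdot \vert u\vert^{1/\ell}$, the hypothesis on $u$ gives $r < 1/2$. For any $N$, the $N$-term truncations $S^{(i)}_N$ of $\Lambda_i(u)$ therefore satisfy
\[
\bigl\vert \Lambda_i(u) - S^{(i)}_N \bigr\vert \;\leq\; \sum_{j>N} \vert \alpha_j\vert \cdot \vert u \vert^{j/\ell} \;\leq\; \sum_{j>N} r^{j} \;\leq\; 2^{-N},
\]
using $\vert \zeta_\ell^j \vert = 1$ for the conjugate series. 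To separate $\upsilon_1$ and $\upsilon_2$ it suffices that each truncation error be smaller than $\Gamma(\ell)/4$, which amounts to $2^{-N} < 1/\bigl(4\exp(\Phi(\ell))\bigr)$, i.e., $N \geq 2 + \Phi(\ell)/\ln 2 = \mathrm{poly}(\ell)$.

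Finally, I would address the arithmetic precision with which each individual coefficient $\alpha_j$ (and the quantity $u^{j/\ell}$) must itself be stored. Since Lemma~\ref{lem:puibound} bounds the height of each $\alpha_j$ by $\mathrm{poly}(\ell)$ and since we need only $N = \mathrm{poly}(\ell)$ of them, a total of $\mathrm{poly}(\ell)$ bits of floating-point precision suffices for the partial sum $S^{(i)}_N$ to approximate $\Lambda_i(u)$ to within $\Gamma(\ell)/2$; combined with the Liouville separation $\vert \upsilon_1 - \upsilon_2\vert \geq \Gamma(\ell)$ this distinguishes $\upsilon_1$ from $\upsilon_2$. The main obstacle I anticipate is not in the truncation geometry but in tracking these constants cleanly: one must ensure that the polynomial $\Psi$ controlling the coefficient growth and the polynomial $\Phi$ controlling the separation are independent of $\ell$, and that the height hypothesis on $u$ is strong enough to ride the tension between the Liouville lower bound (which shrinks like $\exp(-\Phi(\ell))$) and the geometric tail bound (which only contracts by a factor of $2$ per term); this is exactly why the hypothesis demands $\vert u\vert^{1/\ell}$ strictly smaller than $1/\bigl(2\exp(\Psi(\ell))\bigr)$, as opposed to merely $1/\exp(\Psi(\ell))$.
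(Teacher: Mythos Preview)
Your proposal is correct and follows essentially the same route as the paper: both arguments combine the Liouville-type separation bound of Lemma~\ref{lem:algbound} (giving $\vert \upsilon_1-\upsilon_2\vert \geq \Gamma(\ell)$) with the coefficient growth estimate $\vert \alpha_j\vert \leq \exp(\Psi(\ell)\cdot j)$ from Lemma~\ref{lem:radconv}, so that under the hypothesis on $\vert u\vert^{1/\ell}$ the tail $\sum_{j>N}\vert \alpha_j\vert \cdot \vert u\vert^{j/\ell}$ is bounded by $\sum_{j>N}2^{-j}$, which drops below $\Gamma(\ell)/4$ for $N=\mathrm{poly}(\ell)$. Your additional paragraph on bit-precision for storing the individual $\alpha_j$ is a welcome refinement that the paper leaves implicit.
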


\begin{proof}
    Write $\mathbf{t}:=t^{1/\ell}$, so we regard $\Lambda$ and $\Lambda'$ as power series in $\mathbf{t}$. We show firstly, that with $\mathrm{poly}(\ell)$ terms, we can approximate $\Lambda$ and $\Lambda'$ at $u$ to within $\Gamma(\ell)/4$ of $\upsilon_1$ and $\upsilon_2$ respectively. Denote by $\lambda^{(m)}_{1}(\mathbf{t})$ and $\lambda^{(m)}_{2}(\mathbf{t})$ the $m^{\mathrm{th}}$ partial sums of $\Lambda_1(\mathbf{t})$ and $\Lambda_2(\mathbf{t})$ respectively. Then, applying Lemma~\ref{lem:radconv}
    $$
    \vert\Lambda_{1}(u)-\lambda_{1}^{(m)}(u)\vert=\sum_{j>m}\vert\alpha_{j} \vert \cdot (\vert u \vert ^{1/\ell})^{j}\leq \sum_{j>m}(\mathrm{exp}(\Psi(\ell))\cdot u)^{j}\leq \sum_{j>m}\frac{1}{2^{j}},
    $$
    which can clearly be made less than $\Gamma(\ell)/4$ for a value of $m$ polynomial in $\ell$. So, we have
    $$
    \vert \upsilon_{1}-\lambda_{1}^{(m)}(u) \vert < \Gamma(\ell)/4 \  \ \mathrm{and} \ \ \vert \upsilon_{2}-\lambda_{2}^{(m)}(u) \vert < \Gamma(\ell)/4
    $$
for $m\in \mathbb{Z}_{>0}$ bounded by a polynomial in $\ell$. By Lemma~\ref{lem:algbound}, these truncations specify $\upsilon_1$ and $\upsilon_2$ uniquely and unambiguously as $\vert \upsilon_{1}- \upsilon_2 \vert > \Gamma(\ell)$.

\end{proof}

Combining Lemmas~\ref{lem:puiconv}, \ref{lem:algbound} and \ref{lem:convbound}, we have shown the following.

\begin{theorem}[Approximation]
\label{thm:approx}
    Let $\gamma \in \mathcal{F}_{\overline{\eta}}$ and let $z\in \mathcal{Z}$. Assume we have computed $\gamma$ as a tuple $[X_{0}^{(\gamma)}:\ldots : X_{M}^{(\gamma)}(t)]$ of Puiseux expansions truncated upto $\mathrm{poly}(\ell)$ coefficients, with respect to the cospecialisation $\phi_z$. Then, for $u_{z}$ of height bounded by $\mathrm{poly}(\ell)$ such that $\vert z- u_{z}\vert < \varepsilon_{z}/2$, it is possible to determine with $$\mathrm{poly}(\ell) \ \text{space, time and precision complexity,}$$ the unique analytically compatible specialisation $\gamma_{u_{z}}=\phi_{u_{z}}^{-1}(\gamma)$ as the tuple $[x_{0}:\ldots : x_{M}]$ that  $[X^{(\gamma)}_{0}(t):\ldots : X_{M}^{(\gamma)}(t)]$ converges to at $u_{z}$.
\end{theorem}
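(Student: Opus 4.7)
The plan is to assemble the three preceding lemmas (Lemma~\ref{lem:puiconv}, Lemma~\ref{lem:algbound} and Lemma~\ref{lem:convbound}) into a single coordinate-wise procedure. First, by Lemma~\ref{lem:puiconv}, the hypothesis $\vert z-u_z\vert<\varepsilon_z/2$ places $u_z$ strictly inside the common disc of convergence of the series $X^{(\gamma)}_i(t)$, so each coordinate converges at $u_z$ to some algebraic number $x_i$, and the resulting tuple $\gamma_{u_z}=[x_0:\cdots:x_M]$ is necessarily a root of the specialised zero-dimensional ideal $\prescript{(\ell)}{}{}\mathcal{I}_{u_z}$, hence an element of $\mathrm{Pic}^0(\mathcal{X}_{u_z})[\ell]$. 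Uniqueness as the \emph{analytically compatible} specialisation $\phi^{-1}_{u_z}(\gamma)$ is automatic, since Hensel-lifting the point $\gamma_{u_z}$ at the smooth specialisation $u_z$ produces the very germ that the $X^{(\gamma)}_i$ define on the overlap of the two convergence domains.

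Second, to \emph{identify} $\gamma_{u_z}$ effectively I would appeal to the Bugeaud separation bound of Lemma~\ref{lem:algbound}. Because $u_z$ has height polynomial in $\ell$ and the ideal $\prescript{(\ell)}{}{}\mathcal{I}_{u_z}$ has degree and coefficient height polynomial in $\ell$ (via Theorem~\ref{thm:tors_height}), the defining polynomial of each coordinate has both degree $\mathbf{d}$ and height $\mathbf{h}$ polynomial in $\ell$. Consequently any two distinct conjugate candidates for $x_i$ are separated by at least $\Gamma(\ell)=1/\exp(\Phi(\ell))$, so approximating the value of the series $X^{(\gamma)}_i(u_z)$ to within $\Gamma(\ell)/4$ is already enough to pin down $x_i$ unambiguously among the roots of its minimal polynomial.

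Third, the quantitative cost of producing such an approximation is exactly what Lemma~\ref{lem:convbound} delivers: using the radius-of-convergence estimate $\varepsilon_z>1/\exp(\Psi(\ell))$ from Lemma~\ref{lem:radconv}, the choice $\vert u_z-z\vert<\varepsilon_z/2$ forces $\vert u_z-z\vert^{1/M}<\tfrac{1}{2}\exp(-\Psi(\ell))$, whence each tail sum of the Puiseux series at $u_z$ is dominated by a geometric series with ratio $1/2$ after factoring out the Hickel bound on coefficients. A truncation of $\mathrm{poly}(\ell)$ terms therefore brings us within $\Gamma(\ell)/4$ of $x_i$. Since each truncated series can be stored and evaluated in $\mathrm{poly}(\ell)$ space and time (using the height bound of Lemma~\ref{lem:puibound} on the Puiseux coefficients), applying the above estimate coordinate-by-coordinate to $[X^{(\gamma)}_0(t):\cdots:X^{(\gamma)}_M(t)]$ yields $\gamma_{u_z}$ within the stated complexity.

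The main obstacle, which the three input lemmas are designed to overcome in concert, is reconciling three \emph{a priori} independent scales and showing they are all controlled by polynomials in $\ell$: the Puiseux radius of convergence $\varepsilon_z$ at the singular centre $z$, the conjugate separation $\Gamma(\ell)$ at the specialisation point, and the truncation precision that links them. The delicate point is that the factor $1/2$ in the condition $\vert z-u_z\vert<\varepsilon_z/2$ is exactly what converts the Hickel bound on the Puiseux coefficients into a geometric tail decay of ratio $1/2$; without this margin, the number of required terms (and hence the bit precision) could blow up super-polynomially even though each individual bound is polynomial in $\ell$.
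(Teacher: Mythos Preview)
Your proposal is correct and takes essentially the same approach as the paper: the paper's own proof consists of the single sentence ``Combining Lemmas~\ref{lem:puiconv}, \ref{lem:algbound} and \ref{lem:convbound}, we have shown the following'' followed by a \qed, and your write-up is precisely the detailed unpacking of that combination. Your fourth paragraph, explaining how the margin factor $1/2$ in $\vert z-u_z\vert<\varepsilon_z/2$ converts the coefficient bound into geometric tail decay, even supplies commentary that the paper leaves implicit.
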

\qed

The next task is to make the specialisation map explicit. Let $z\in \mathcal{Z}$. In Algorithm~\ref{algo:cospec}, we obtained a representation of $\mathcal{F}_{\overline{\eta}}$ as Puiseux series around $z$, with the common minimal radius of convergence $\varepsilon_z$. In Algorithm~\ref{algo:recenter}, we indicate how to compute, for $\gamma\in \mathcal{F}_{\overline{\eta}}$ obtained via Puiseux series expansions around $z$; the specialisation $\phi_{u_{z}}^{-1}(\gamma)\in \mathrm{Pic}^{0}(\mathcal{X}_{u_{z}})[\ell]$ for $u_{z}\in \mathcal{U}$ such that $\vert z-u_{z}\vert < \varepsilon_z$.

\begin{algorithm}
    \caption{Re-centering}
    \label{algo:recenter}
     \begin{itemize}
        \item \textbf{Input:} An element $\gamma\in \mathcal{F}_{\overline{\eta}}$ represented by a tuple $[X_{0}^{\gamma}(t):\ldots :X_{M}^{(\gamma)}(t)]$ of Puiseux series around $z$ as a $\mathbf{K}$ -- rational point in $\mathbb{P}^{M}$ (via Algorithm~\ref{algo:cospec}), and a smooth point $u\in \mathcal{U}$ with $\vert u- z\vert <\varepsilon_z$.
        
        \item \textbf{Output:} The specialisation $\phi_{u_z}^{-1}(\gamma)\in \mathrm{Pic}^{0}(\mathcal{X}_{u_z})[\ell]$. 
        
    \end{itemize}

    \begin{algorithmic}[1]

    \STATE \label{algo:recenter1} Specialise the ideal $\prescript{(\ell)}{}{}\mathcal{I}_{\overline{\eta}}$ at $u_z$ to obtain the $\ell$ -- division ideal $\prescript{(\ell)}{}{}\mathcal{I}_{u_{z}}$ for $\mathrm{Pic}^{0}(\mathcal{X}_z)$ by Section~\ref{app:igusa}.

    \STATE \label{algo:recenter2}  Compute the $\ell^{2g}$ distinct $\ell$ -- torsion elements $\mathrm{Pic}^{0}(\mathcal{X}_{u_z})[\ell]$ via a zero-dimensional system solving algorithm (\cite{rouillier}) applied to $\prescript{(\ell)}{}{}\mathcal{I}_{u_z}$.

    \STATE \label{algo:recenter3}  The input tuple $[X_{0}^{(\gamma)}(t):\ldots:X^{(\gamma)}_{M}(t)]$ actually converges at $u_z$ to a point $[x_{0}:\ldots:x_{M}]\in \mathrm{Pic}^{0}(\mathcal{X}_{u_{z}})$. Determine the point as a tuple of algebraic numbers by using Theorem~\ref{thm:approx} and matching with the points computed in Step 2.
    
\end{algorithmic}
\end{algorithm}

\subsection{Computing vanishing cycles and monodromy}
\label{subsec:computevan}

The goal of this subsection is to compute the monodromy action on $\mathcal{F}_{\overline{\eta}}$. Additionally, we also compute the local monodromy at each singular point, explicitly computing each vanishing cycle in the process. This algebraic computation of monodromy can be understood as an algebraic, finite coefficient analogue of the work \cite{lpv} extended to the case of a Lefschetz pencil on an arbitrary smooth projective surface (as opposed to a hypersurface). 

\begin{remark}
    The vanishing cycle $\delta_{z}$ depends on the chosen cospecialisation $\phi_{z}:\mathcal{F}_{z}\hookrightarrow \mathcal{F}_{\overline{\eta}}$. Hence, it would be more accurate to write $\phi_{z}(\delta_{z})\in \mathcal{F}_{\overline{\eta}}$ for the vanishing cycle, but we abuse notation by referring to it as just $\delta_{z}$. This is because the cospecialisations $\phi_{z}$ have already been chosen or determined, as will be seen below.
\end{remark}

As stated in Section~\ref{subsec:cospec}, for $z\in \mathcal{Z}$, the vanishing cycle $\delta_{z}\in \mathcal{F}_{\overline{\eta}}$ is determined uniquely upto sign by the Picard-Lefschetz formulas after picking a $\overline{K}(t)$ -- embedding $\mathbf{K}\hookrightarrow \overline{K}\langle \langle t-z \rangle \rangle$. Firstly, write $Z=\{z_{1}, \ldots , z_{r}\}$ as an ordered set of distinct points for $r\in \mathbb{Z}_{>0}$. We make certain preliminary simplifications following the discussion before \cite[Theorem 3.23]{milne80}.

\hfill

Choose $\zeta_{s}:=\mathrm{exp}(2\pi i/s)$ as a generator of $\mu_{s}(\overline{K})$ for each $s$ so that $\zeta_{l}=\zeta_{sl}^{s}$. Let $I^{\mathfrak{t}}_{z_{j}}$ denote the tame inertia group at $z_{j}$ and let $\sigma_{j}$ be its generator. We need to choose embeddings $I^{\mathfrak{t}}_{z_{j}}\hookrightarrow \mathrm{Gal}(\overline{K(t)}/\overline{K}(t))$ in such a way that the $\sigma_{j}$ together generate the tame fundamental group $\pi_{1}(U, \overline{\eta})$ and $\prod_{j=1}^{r}\sigma_{j}=1$. This implies that we are freely permitted to choose the embeddings for $1\leq j\leq r-1$ but the embedding for $j=r$ is decided by the others, so that $$\sigma_{r}=\prod_{j=1}^{r-1}\sigma_{r-j}^{-1}\in \pi_{1}^{\mathfrak{t}}(U, \overline{\eta}).$$
Further, for all $1\leq j \leq r$, the canonical generator $\sigma_{j}$ of the inertia $I^{\mathfrak{t}}_{z_{j}}$ acts as 
$$
\sigma_{j}\left(t-z_{j}\right)^{1/s}=\zeta_{s}\left(t-z_{j}\right)^{1/s}.
$$
What this means for us, is that the cospecialisation maps $\phi_{{z}_{j}}:\mathcal{F}_{z_{j}}\hookrightarrow \mathcal{F}_{\overline{\eta}}$ are determined by arbitrary embeddings for $1\leq j \leq r-1$, but once these choices have been made, the {last cospecialisation} $\phi_{z_{r}}:\mathcal{F}_{z_{r}}\hookrightarrow \mathcal{F}_{\overline{\eta}}$ is completely determined by the previously made choices. With these simplifications, the Picard-Lefschetz formula (\ref{eqn:piclef}) becomes
\begin{equation}
    \label{eqn:simpiclef}
    \sigma_{j}(\gamma)=\gamma - \langle \gamma, \delta_{z_{j}}\rangle \delta_{z_{j}}
\end{equation}
for $\gamma \in \mathcal{F}_{\overline{\eta}}$ and $1\leq j \leq r$. We now give a method, such that given $z_{j}\in \mathcal{Z}$ for $1\leq j\leq r-1$, and $u_{j}\in \mathcal{U}$ with $\vert z_{j}- u_{j} \vert<\varepsilon_{z_j}$, we compute $\phi_{u_j}^{-1}(\delta_{z_{j}})$ as an element of $\mathrm{Pic}^{0}(\mathcal{X}_{u_{j}})[\ell]$.

\begin{algorithm}
\caption{\texttt{Computing vanishing cycles}}
\label{algo:van}
\begin{itemize}
    \item \textbf{Input:} A singular point $z\in \mathcal{Z}\setminus \{z_r\}$ and a smooth point $u_z$ such that $\vert z-u_z \vert < \varepsilon_z$.
    \item \textbf{Output:} An element $\delta_{z}\in \mathcal{F}_{\overline{\eta}}$ unique upto sign, that is the vanishing cycle at $z$ with respect to the cospecialisation $\phi_{z}$ of Algorithm~\ref{algo:cospec}.
    
    % specialised to an element of $\mathcal{F}_{u_z}$. In other words, the element $\phi_{u_{z}}^{-1}(\delta_{z})\in \mathrm{Pic}^{0}(\mathcal{X}_{u_{z}})[\ell]$ is returned.

    \end{itemize}

    \begin{algorithmic}[1]
\STATE \label{algo:van1} Obtain a representation of $\mathcal{F}_{\overline{\eta}}$ as Puiseux series around $z$ using Algorithm~\ref{algo:cospec}.

\STATE \label{algo:van2}  Choose $\gamma=[X_{0}^{(\gamma)}(t):\ldots:X_{M}^{(\gamma)}(t)]\in \mathcal{F}_{\overline{\eta}}\setminus \phi_{z}(\mathcal{F}_{z})$. This reduces to choosing a $\gamma$ for which at least one of the Puiseux series $X_{j}^{(\gamma)}(t)$ is ramified at $z$, i.e., is a true Puiseux series and not in fact a Laurent series.

\STATE \label{algo:van3}  Writing
$$
X_i^{(\gamma)}(t)=\sum_{j}\alpha^{(\gamma)}_{i, j}(t-z)^{j/\ell}
$$
evaluate $$\sigma_{z}(\gamma)=[X_{0}^{(\sigma_{z}(\gamma))}(t): \ldots : X_{M}^{(\sigma_{z}(\gamma))}(t)]$$
where
$$
X_i^{(\sigma_{z}(\gamma))}(t)=\sum_{j}\alpha^{(\gamma)}_{i, j}\zeta_{\ell}^{j}(t-z)^{j/\ell}.
$$

\STATE \label{algo:van4}  Compute the element $\phi_{u_{z}}^{-1}(\sigma_{z}(\gamma))\in \mathrm{Pic}^{0}(\mathcal{X}_{u_{z}})[\ell]$ using the specialisation of Algorithm~\ref{algo:recenter}.

\STATE \label{algo:van5} Compute $\phi_{u_{z}}^{-1}(\gamma)$ using Algorithm~\ref{algo:recenter}.

\STATE \label{algo:van6}  Compute $$\delta:=\phi_{u_{z}}^{-1}(\sigma_{z}(\gamma))-\phi_{u_{z}}^{-1}(\gamma)$$ using the explicit group law on $\mathrm{Pic}^{0}(\mathcal{X}_{u_{z}})$ (using Theorem~\ref{thm:eqnjac}).

\STATE \label{algo:van7} Use the inverse of the abstract Abel map of Section~\ref{app:jac} (Algorithm~\ref{algo:abel}) to represent the $\ell$ -- torsion points $\phi_{u_{z}}^{-1}(\gamma)$ and $\delta$ as divisors on $\mathcal{X}_{u_{z}}$.

\STATE \label{algo:van8}  Use the divisorial representation in Step 7 to compute the Weil pairing $$a:=\langle \phi_{u_{z}}^{-1}(\gamma), \delta\rangle \in \mathbb{Z}/\ell \mathbb{Z}$$ on $\mathrm{Pic}^{0}(\mathcal{X}_{u_{z}})[\ell]$ using Algorithm~\ref{algo:pairing}.

\STATE \label{algo:van9}  Applying (\ref{eqn:piclefcomp}), compute
$$
\phi_{u_{z}}^{-1}(\delta_z)=\pm (\sqrt{-a^{-1}}) \cdot \delta\in \mathrm{Pic}^{0}(\mathcal{X}_{u_{z}})[\ell]
$$
via the explicit addition law (Theorem~\ref{thm:eqnjac}), and make an arbitrary choice of sign.

\STATE \label{algo:van10}  With knowledge of $\phi_{u_{z}}^{-1}(\delta_z)$, identify it with the correct tuple of Puiseux expansions around $z$  and return $\delta_z$ as a rational function in the primitive element $\bm{\tau}$.

    \end{algorithmic}
    
\end{algorithm}

\begin{theorem}
\label{thm:algovan}
      Algorithm~\ref{algo:van} uniquely determines the vanishing cycle at each $z\in \mathcal{Z}\setminus \{z_r\}$, upto sign.
\end{theorem}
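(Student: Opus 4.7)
The plan is to verify step by step that each line of Algorithm~\ref{algo:van} produces the stated object, with the central engine being the Picard--Lefschetz identity (\ref{eqn:simpiclef})
\[
\sigma_z(\gamma)-\gamma=-\langle\gamma,\delta_z\rangle\cdot\delta_z,
\]
which expresses any nontrivial $\sigma_z$-motion as an explicit scalar multiple of the vanishing cycle. Because this identity determines $\delta_z$ as soon as we can see $\sigma_z(\gamma)-\gamma$ and the Weil pairing $\langle\gamma,\delta_z\rangle$, the whole proof reduces to checking that the algorithm's concrete manipulations realise precisely these two quantities in $\mathrm{Pic}^{0}(\mathcal{X}_{u_z})[\ell]$.

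First I would justify Step~\ref{algo:van2}: by Section~\ref{subsec:cospec}, the cospecialisation $\phi_z$ identifies $\phi_z(\mathcal{F}_z)\subset\mathcal{F}_{\overline{\eta}}$ with the inertia-invariant subspace, which consists precisely of those classes whose Puiseux coordinates involve only integer powers of $(t-z)$, i.e.\ are Laurent. Hence picking $\gamma$ with a genuinely ramified coordinate is equivalent to $\gamma\notin\mathcal{F}_{\overline{\eta}}^{\sigma_z}$, and symplectic Picard--Lefschetz identifies the invariant subspace with $\delta_z^{\perp}$ under the Weil pairing; so $c:=-\langle\gamma,\delta_z\rangle\in\mathbb{F}_\ell^{\times}$, legalising the later division by $c$. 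For Step~\ref{algo:van3}, since $\phi_z$ pins down the embedding $\mathbf{K}\hookrightarrow\overline{K}\langle\langle t-z\rangle\rangle$, the canonical tame generator acts on the chosen uniformiser by $(t-z)^{1/\ell}\mapsto\zeta_\ell(t-z)^{1/\ell}$, and hence on each coordinate Puiseux series exactly by the coefficientwise twist displayed in the algorithm.

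Next I would verify Steps~\ref{algo:van4}--\ref{algo:van6}: by Lemma~\ref{lem:isoco} the specialisation $\phi_{u_z}^{-1}$ is a group isomorphism, and by Lemma~\ref{lem:puiconv} and Theorem~\ref{thm:approx} it is realised concretely by evaluating the Puiseux tuples at $u_z$. Transporting the Picard--Lefschetz identity along $\phi_{u_z}^{-1}$ gives
\[
\delta=\phi_{u_z}^{-1}(\sigma_z(\gamma))-\phi_{u_z}^{-1}(\gamma)=c\cdot\phi_{u_z}^{-1}(\delta_z),
\]
a nonzero multiple of the specialised vanishing cycle. Steps~\ref{algo:van7}--\ref{algo:van8} then compute the Weil pairing on $\mathrm{Pic}^{0}(\mathcal{X}_{u_z})[\ell]$ via Algorithm~\ref{algo:pairing} in divisor form, and the pairing-preservation Lemma~\ref{lem:pairpres} yields
\[
a=\langle\phi_{u_z}^{-1}(\gamma),\delta\rangle=c\cdot\langle\phi_{u_z}^{-1}(\gamma),\phi_{u_z}^{-1}(\delta_z)\rangle=c\cdot\langle\gamma,\delta_z\rangle=-c^{2}\in\mathbb{F}_\ell.
\]
Hence $c^{-1}=\pm\sqrt{-a^{-1}}$, so $\phi_{u_z}^{-1}(\delta_z)=c^{-1}\delta=\pm\sqrt{-a^{-1}}\cdot\delta$, matching Step~\ref{algo:van9}. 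The binary choice of square root is the only ambiguity in the construction and is exactly the ``upto sign'' assertion of the theorem (which is also intrinsic, since $\delta_z$ is defined only up to sign as a generator of a one-dimensional kernel).

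Step~\ref{algo:van10} then lifts $\phi_{u_z}^{-1}(\delta_z)$ back through the isomorphism $\phi_{u_z}^{-1}$: among the $\ell^{2g}$ Puiseux tuples produced by Algorithm~\ref{algo:cospec}, Theorem~\ref{thm:approx} pinpoints with $\mathrm{poly}(\ell)$ precision the unique tuple that converges at $u_z$ to the divisor just computed, and this yields the required rational expression in $\bm{\tau}$. The main obstacle I expect is Steps~\ref{algo:van4}--\ref{algo:van6}, where one must show that the abstract Galois twist of Puiseux coefficients used in Step~\ref{algo:van3} really does commute with $\phi_{u_z}^{-1}$, so that the Picard--Lefschetz identity can be transported verbatim from $\mathcal{F}_{\overline{\eta}}$ to $\mathrm{Pic}^{0}(\mathcal{X}_{u_z})[\ell]$. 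This compatibility is the content of ``analytically compatible cospecialisations'' (Lemma~\ref{lem:puiconv} and the remark following it) combined with Lemma~\ref{lem:pairpres}; once both are invoked, the remaining verifications are mechanical.
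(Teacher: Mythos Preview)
Your proof is correct and follows essentially the same approach as the paper's: both hinge on transporting the Picard--Lefschetz identity $\sigma_z(\gamma)-\gamma=-\langle\gamma,\delta_z\rangle\delta_z$ through the specialisation $\phi_{u_z}^{-1}$ and then solving for the unknown scalar via the Weil pairing, yielding $a=-c^{2}$ and hence $\phi_{u_z}^{-1}(\delta_z)=\pm\sqrt{-a^{-1}}\cdot\delta$. The only cosmetic difference is that the paper packages this computation as a separate Lemma~\ref{lem:piclef} and defines its scalar $c$ as the reciprocal of yours (writing $c\cdot\phi_{u_z}(\delta)=\delta_z$ rather than $\delta=c\cdot\phi_{u_z}^{-1}(\delta_z)$), and it cites Igusa's Theorem~\ref{thm:igred} rather than Section~\ref{subsec:cospec} for the Laurent-series characterisation of $\phi_z(\mathcal{F}_z)$; neither affects the argument.
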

\begin{proof}
Let $\gamma\in \mathcal{F}_{\overline{\eta}}\setminus \phi_{z}(\mathcal{F}_{z})$. By Section~\ref{subsec:cospec}, we know that after a choice of embedding, we may write $$\gamma=[X^{(\gamma)}_{0}(t):\ldots : X^{(\gamma)}_{M}(t)]$$ as a tuple of Puiseux series around $z$, representing a $\overline{K(t)}$ -- rational point of $\mathrm{Pic}^{0}(\mathcal{X}_{\overline{\eta}})$.
By Theorem~\ref{thm:igred}, we know that the image  $\phi_{z}(\mathcal{F}_{z})$ is all rational over $\overline{K}((t-z))$, so in order to choose $\gamma$ from outside $\mathcal{F}_{z}$, it suffices to ensure one associated Puiseux expansion ramifies at $z$. \\ \\
Having chosen compatible generators $\zeta_{s}$ for $\mu_{s}(\overline{K})$, we may identify the inertia $I^{\mathfrak{t}}_{z}$ at $z$ as
$$
I^{\mathfrak{t}}_{z}\simeq \prod_{\ell' \ \mathrm{prime}} \mathbb{Z}_{\ell'}.
$$
Our choice of topological generator $\sigma_{z}$ sends $(t-z)^{1/\ell}$ to $\zeta_{\ell}(t-z)^{1/\ell}$, and acts termwise on the Puiseux expansions associated to $\gamma$. In this way, the action of $\sigma_{z}$ is realised as an automorphism of $\mathcal{F}_{\overline{\eta}}$, that precisely fixes $\phi_{z}(\mathcal{F}_{z})$. In particular, since $\gamma\not\in \phi_{z}(\mathcal{F}_{z })$, we have $\sigma_{z}(\gamma)\ne \gamma$. Therefore, by the Picard-Lefschetz formula (\ref{eqn:simpiclef}), we know $\langle \gamma, \delta_{z} \rangle \neq 0$. \\ \\ For a $u_{z}$ such that $\vert z-u_{z} \vert < \varepsilon_{z}$, we know that the Puiseux series $X^{(\gamma)}_{i}(t)$ all converge at $t=u_{z}$. Further, by Section~\ref{subsec:spec}, Algorithm~\ref{algo:recenter} computes the unique (and distinct) specialisations $\phi_{u_{z}}^{-1}(\sigma_{z}(\gamma))$ and $\phi_{u_{z}}^{-1}(\gamma)$ of $\gamma$ to the $\ell$ -- torsion of $\mathrm{Pic}(\mathcal{X}_{u_{z}})$. Set $$
\delta:=\phi_{u_{z}}^{-1}(\sigma_{z}(\gamma))-\phi_{u_{z}}^{-1}(\gamma)=\phi_{u_{z}}^{-1}(\sigma_{z}(\gamma)-\gamma),
$$
and $a:=\langle \phi_{u_{z}}^{-1}(\gamma), \delta \rangle$. Note that a priori, $a\in \mu_{\ell}(\overline{K})$, but we have then taken its discrete logarithm with respect to the generator $\zeta_{\ell}$. It remains to show the following.

\begin{lemma}
\label{lem:piclef}
The vanishing cycle $\delta_{z}$ at $z$ can be computed as
\begin{equation}
\label{eqn:piclefcomp}
\delta_{z}= \pm \phi_{u_{z}}\left((\sqrt{-a^{-1}}) \cdot \delta\right)
\end{equation}
\end{lemma}
\begin{proof}
   First, we see that $a\neq 0$ as an element of $\mathbb{Z}/\ell \mathbb{Z}$. Indeed, $$a=\langle \phi^{-1}_{u_{z}}(\gamma), \delta\rangle =\langle \phi_{u_{z}}^{-1}(\gamma), \phi^{-1}_{u_{z}}\left(\sigma_{z}(\gamma)-\gamma\right)\rangle=\langle \gamma, \sigma_{z}(\gamma)-\gamma\rangle=\langle \gamma, \sigma_{z}(\gamma) \rangle \ne 0.$$ 
Further, we know by the Picard-Lefschetz formulas, or Section~\ref{app:igusa}, Theorem~\ref{thm:igg} that $\phi_{u_{z}}(\delta)=\sigma_{z}(\gamma)-\gamma \ \in \ <\delta_{z}> \ \subset \mathcal{F}_{\overline{\eta}}$. Therefore, writing $$c\cdot \phi_{u_{z}}(\delta)= \delta_{z}$$ for some $c\in (\mathbb{Z}/\ell \mathbb{Z})^{*}$, we see $$
\sigma_{z}(\gamma)-\gamma=-\langle \gamma, \delta_{z} \rangle \delta_{z}=-c\cdot \left(\langle \gamma, c\cdot \phi_{u_z}(\delta)\rangle\right)\cdot \phi_{u_z}(\delta)= -c^{2}\cdot \left(\langle \gamma, \phi_{u_z}(\delta) \rangle \right)\cdot \phi_{u_z}(\delta) = \phi_{u_z}(\delta). 
$$
Equating coefficients, we have $$
a=\langle \phi_{u_{z}}^{-1}, \delta \rangle=\langle \gamma, \phi_{u_z}(\delta) \rangle =-c^{-2}.
$$
Therefore, we see 
$$
c=\pm\sqrt{-a^{-1}}.
$$

\end{proof}

Thus, the specialised vanishing cycle $\phi^{-1}_{u_{z}}(\delta_{z})\in \mathrm{Pic}^{0}(\mathcal{X}_{u_{z}})[\ell]$ is computed. This completes the proof of Theorem~\ref{thm:algovan}.

\end{proof}
\begin{remark}
    We check that $-a$ is indeed a square in $\mathbb{Z}/\ell \mathbb{Z}$ as $$-a=-\langle \gamma, \phi_{u_z}(\delta)\rangle=-\langle \gamma, \sigma_{z}(\gamma)\rangle=-\langle \gamma, -(\langle \gamma, \delta_{z}\rangle )\cdot \delta_{z}\rangle=(\langle \gamma, \delta_{z}\rangle)^{2}.$$
\end{remark}
    We emphasise again that the cospecialisations $\phi_{z_j}: \mathcal{F}_{z_{j}}\rightarrow \mathcal{F}_{\overline{\eta}}$ have only been made explicit for $1\leq j\leq r-1$, as arbitrary choices were allowed for the associated embeddings $I^{\mathfrak{t}}_{z_{j}}\hookrightarrow \mathrm{Gal}\left(\overline{K(t)}/\overline{K}(t)\right)$. However, the final embedding $I^{\mathfrak{t}}_{z_{r}}\hookrightarrow \mathrm{Gal}\left(\overline{K(t)}/\overline{K}(t)\right)$ is completely determined by the previous ones, via the  relation $\prod_{j=1}^{r}\sigma_{j}=1$ in $\pi_{1}^{\mathfrak{t}}(U, \overline{\eta})$. Hence, an explicit representation of the \textit{last vanishing cycle} $\delta_{z_r}$ can be computed by just using the knowledge of the action of the other inertia generators. Thus, this enables us to compute the subspace $\mathcal{E}_{\overline{\eta}}\subset \mathcal{F}_{\overline{\eta}}$ of vanishing cycles. We sum up, with an algorithm computing the action of the generators $\sigma_{j}$ for $1\leq j< r$, of the geometric monodromy.

\begin{algorithm}
    \caption{\texttt{Computing the monodromy}}
    \label{algo:mono}
    \begin{itemize}
    \item \textbf{Input:} An element $\gamma\in \mathcal{F}_{\overline{\eta}}$ presented as a tuple of rational functions in the primitive element $\bm{\tau}$.
    \item \textbf{Output:} For each $z_j\in \mathcal{Z}\setminus \{z_r\}$, the element $\sigma_j(\gamma)$, again presented as a tuple of rational functions in $\bm{\tau}$.

    \end{itemize}

\begin{algorithmic}[1]

\STATE \label{algo:mono1} For $z\in \mathcal{Z}\setminus \{z_r\}$, expand $\gamma$ as a Puiseux series around $z$ and compute $\sigma_{z}(\gamma)$ as in Step~\ref{algo:van3} of Algorithm~\ref{algo:van}.

\STATE \label{algo:mono2} Express $\sigma_z(\gamma)$, which is now represented as a tuple of Puiseux expansions around $z$, as a tuple of rational functions in $\bm{\tau}$, using the Puiseux expansion $\lambda_z$ for $\bm{\tau}$ and linear algebra.

\STATE \label{algo:mono3} Return the tuple of rational functions in $\bm{\tau}$.

\end{algorithmic}
\end{algorithm}
We conclude with a table drawing a parallel with monodromy computations in the complex analytic setting, such as \cite{lpv}.

\begin{table}[h!]
  \centering
 \renewcommand{\arraystretch}{1.8}
  \begin{tabular}{|p{8cm}|p{8cm}|p{8cm}|}
    \hline
    \textbf{Analytic side} & \textbf{\'Etale side} \\
    \hline
    $\pi_{1}^{\text{top}}(\mathcal{U}, u)$ & $\pi_{1}^{\text{\'et}}(\mathcal{U}, \overline{u})$ \\
    \hline
    Generator $\bm{\sigma}_j$ & Topological generator $\sigma_j$ \\
    \hline
    Loop based at $u$ going around a puncture $z$ & Embedding $I_z\hookrightarrow \mathrm{Gal}(\overline{K(t)}/K(t))$, together with isomorphism of fiber fuctors at $u$ and geometric generic point $\overline{\eta}=\mathrm{Spec}(\overline{K(t)})$. \\
    \hline
  \end{tabular}
  \caption{Analytic-\'etale comparison}
  \label{tab:comp}
\end{table}

\section{The Edixhoven subspace}
\label{sec:algos}
In this section, we describe how to compute the Galois action on the second \'etale cohomology. We begin with a high-level description of the strategy.
\begin{itemize}
    \item Having computed the monodromy, compute the normalisation of $\mathbb{P}^{1}$ in the function field of the \'etale cover $\mathcal{V}\rightarrow \mathcal{U}$ trivialising the locally constant sheaf $\mathcal{F}\vert_{\mathcal{U}}:=\mathrm{R}^{1}\pi_{\star}\mu_{\ell}\vert_{\mathcal{U}}$. Write $j:\mathcal{U}\rightarrow \mathbb{P}^{1}$ for the inclusion, and denote by $\mathcal{E}\subset \mathcal{F}\vert_{\mathcal{U}}$, the locally constant subsheaf of vanishing cycles.

    \item Let $\tilde{\mathcal{V}}\rightarrow \mathbb{P}^{1}$ now be the smooth curve so obtained, ramified at $\mathcal{Z}$. Then, the Galois action on $\mathrm{H}^{1}(\mathbb{P}^{1}, \mathcal{F})\subset \mathrm{H}^{2}(\mathcal{X}, \mu_\ell)$ can be computed from the action of Galois on the subspace of $\mathrm{H}^{1}(\tilde{\mathcal{V}},\mu_\ell)\otimes_{\mathbb{F}_{\ell}}\mathcal{E}_{\overline{\eta}}$, given by those tensors invariant under the diagonal action of $G$.
    \item The action of $G$ on $\tilde{\mathcal{V}}$ extends naturally to an action on $\mathrm{H}^{1}(\tilde{\mathcal{V}}, \mu_\ell)$. One then isolates the \textit{Edixhoven subspace} $\mathbb{E}\subset \mathrm{H}^{1}(\tilde{\mathcal{V}}, \mu_\ell)$, i.e., the subspace spanned by all copies of $\mathcal{E}_{\overline{\eta}}^{\vee}$ inside it, by working over a finite field, modulo a small auxiliary prime $\mathfrak{P}$ of good reduction, distinct from $\ell$.

    \item Calling $\tilde{\mathcal{V}}_{\mathfrak{P}}$ the curve obtained upon reduction, we obtain its zeta function by counting points, and isolate the Edixhoven subspace $\mathbb{E}_{\mathfrak{P}}$ (which is defined over a poly-bounded extension) with knowledge of the monodromy action.

    \item The subspace $\mathbb{E}_{\mathfrak{P}}$ is then lifted $\mathfrak{P}$ - adically, using Hensel's lemma, to the characteristic zero subspace $\mathbb{E}$ following Mascot \cite{mascothensel}, from which the Galois action is subsequently computed.

\end{itemize}
\subsection{The trivialising cover}
Consider the \'etale cover $\mathcal{V}\rightarrow \mathcal{U}$ that trivialises the locally constant sheaf $\mathcal{F}\vert_\mathcal{U}=R^{1}\pi_{\star}\mu_{\ell}\vert_{\mathcal{U}}$ (and hence, also $\mathcal{E}$), i.e., $\mathcal{F}\vert_{\mathcal{V}}=\mu_{\ell}^{\oplus 2g}$. One then  normalises the function field of $\mathbb{P}^{1}$ in the Galois closure of the field $\overline{K}(\mathrm{Jac}(\mathcal{X}_{\overline{\eta}})[\ell])$ that the relative $\ell$ -- torsion $\mathrm{Jac}(\mathcal{X}_{\overline{\eta}})[\ell]$ of the Jacobian of the generic fibre is defined over, to obtain $\tilde{\mathcal{V}}\rightarrow \mathbb{P}^{1}$. Passage to the Galois closure of a field is efficiently possible, simply by computing a primitive element, and going to its splitting field.

As seen earlier, this extension is of degree bounded by a polynomial in $\ell$, and a birational planar model of the curve representing this extension can be computed via a primitive element. A representation for $\tilde{\mathcal{V}}$ is computed via normalisation, for which there is a polynomial-time (in the genus $\mathfrak{g}$ of the curve) algorithm \cite{Kozen}. Further, the associated map $\mathfrak{j}:\mathcal{V}\rightarrow \mathcal{U}$ can be computed in polynomial-time. The map on the smooth compactifications $\tilde{\mathfrak{j}}:\tilde{\mathcal{V}}\rightarrow \mathbb{P}^{1}$ is ramified only at $\mathcal{Z}$, and its degree is bounded by a polynomial in $\ell$. \\ \\ Now, we assume that the prime $\ell$ is such that the integral $\ell$ -- adic cohomology groups of $\mathcal{X}$ are all torsion free. This is fine, as we are interested in the growing -- $\ell$ regime, and this condition is true for all $\ell$ larger than a function of the data of $\mathcal{X}$. 
\begin{theorem}
\label{thm:galiso}
    We have the following isomorphism of $\mathrm{Gal}(\overline{K}/K)$ -- modules
    \begin{equation}
        \label{eqn:impgal}
        \mathrm{H}^{1}(\mathbb{P}^{1}, \mathcal{F})\simeq \mathrm{H}^{1}(\mathbb{P}^{1}, j_{\star}\mathcal{E})\simeq \left(\mathrm{H}^{1}(\tilde{\mathcal{V}}, \mu_\ell)\otimes M\right)^{G}       
    \end{equation}
    where $M=\mathcal{E}_{\overline{\eta}}$.
    
\end{theorem}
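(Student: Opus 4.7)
The plan is to establish the two isomorphisms in turn, using different but complementary tools for each, taking care to track the $\mathrm{Gal}(\overline{K}/K)$ -- action throughout.

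For the first isomorphism $\mathrm{H}^{1}(\mathbb{P}^{1}, \mathcal{F}) \simeq \mathrm{H}^{1}(\mathbb{P}^{1}, j_{\star}\mathcal{E})$, I would first observe that $\mathcal{F}$ is the middle extension $j_{\star}(\mathcal{F}\vert_{\mathcal{U}})$: by proper base change together with the local analysis at the nodal fibres, the stalk $\mathcal{F}_z$ at $z\in \mathcal{Z}$ agrees with $(\mathcal{F}_{\overline{\eta}})^{I_z} = \delta_z^{\perp}$, which is the stalk of $j_{\star}j^{\star}\mathcal{F}$. Next, the Picard--Lefschetz formula (\ref{eqn:piclef}) gives $\sigma_j(\gamma)-\gamma\in \langle \delta_{z_j}\rangle \subset \mathcal{E}_{\overline{\eta}}$, so the monodromy acts trivially on the quotient $\mathcal{F}_{\overline{\eta}}/\mathcal{E}_{\overline{\eta}}$. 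Consequently $\mathcal{C}:=(\mathcal{F}\vert_{\mathcal{U}})/\mathcal{E}$ is a \emph{constant} local system on $\mathcal{U}$. In the growing-$\ell$ regime assumed (where the mod-$\ell$ symplectic monodromy representation is semisimple), the short exact sequence $0\to \mathcal{E}\to \mathcal{F}\vert_{\mathcal{U}}\to \mathcal{C}\to 0$ splits, and applying $j_{\star}$ yields $\mathcal{F}\simeq j_{\star}\mathcal{E}\oplus \mathcal{C}_{\mathbb{P}^{1}}$. Since $\mathrm{H}^{1}(\mathbb{P}^{1}, \mathbb{F}_{\ell}) = 0$, the constant summand contributes nothing, giving the first isomorphism. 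Galois equivariance follows since every step above is natural.

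For the second isomorphism $\mathrm{H}^{1}(\mathbb{P}^{1}, j_{\star}\mathcal{E})\simeq (\mathrm{H}^{1}(\tilde{\mathcal{V}}, \mu_{\ell})\otimes M)^{G}$, I would exploit the $G$-Galois \'etale cover $\mathfrak{j}:\mathcal{V}\to \mathcal{U}$ that trivialises $\mathcal{E}$, so that $\mathfrak{j}^{\star}\mathcal{E}$ is the constant sheaf $M_{\mathcal{V}}$. The Hochschild--Serre spectral sequence
\begin{equation*}
\mathrm{H}^{p}(G, \mathrm{H}^{q}(\mathcal{V}, M_{\mathcal{V}}))\;\Rightarrow\; \mathrm{H}^{p+q}(\mathcal{U}, \mathcal{E})
\end{equation*}
combined with the Leray spectral sequence for $j:\mathcal{U}\hookrightarrow \mathbb{P}^{1}$ identifies $\mathrm{H}^{1}(\mathbb{P}^{1}, j_{\star}\mathcal{E})$ as the kernel of the edge map $\mathrm{H}^{1}(\mathcal{U}, \mathcal{E})\to \mathrm{H}^{0}(\mathbb{P}^{1}, R^{1}j_{\star}\mathcal{E})$, matching the description via $\ker(\beta)/\mathrm{im}(\alpha)$ from (\ref{seq}). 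The relevant contribution on the other side is $\mathrm{H}^{1}(\mathcal{V}, M_{\mathcal{V}})\simeq \mathrm{H}^{1}(\mathcal{V}, \mu_{\ell})\otimes M$ (with appropriate Tate twist absorbed into $M$). Passing from the affine curve $\mathcal{V}$ to its smooth compactification $\tilde{\mathcal{V}}$ via the excision/Gysin sequence for the boundary divisor $\tilde{\mathfrak{j}}^{-1}(\mathcal{Z})$, and taking $G$-invariants, the ramification contributions precisely cancel the local cohomology terms at $\mathcal{Z}$ coming from $R^{1}j_{\star}\mathcal{E}$, yielding the claimed $(\mathrm{H}^{1}(\tilde{\mathcal{V}}, \mu_{\ell})\otimes M)^{G}$.

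The main obstacle will be the bookkeeping in the second isomorphism: matching up boundary and ramification contributions over $\tilde{\mathcal{V}}\setminus \mathcal{V}$ with the skyscraper sheaves $R^{1}j_{\star}\mathcal{E}$ supported on $\mathcal{Z}$, all while juggling the $G$-action, the $\mathrm{Gal}(\overline{K}/K)$-action, and the Tate twists from $\mu_{\ell}$ coefficients. The semisimplicity assumption on the mod-$\ell$ monodromy, which underpins the splitting step in the first isomorphism, will also need to be tied explicitly to the earlier hypothesis that the integral $\ell$-adic cohomology of $\mathcal{X}$ is torsion-free for the primes $\ell$ under consideration.
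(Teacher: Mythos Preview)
Your overall architecture---splitting off a constant summand for the first isomorphism, then Hochschild--Serre plus excision for the second---is exactly the paper's route. The first isomorphism is fine: the paper phrases the splitting as ``hard Lefschetz holds mod $\ell$'' under the torsion-freeness hypothesis, but this is your semisimplicity statement in different clothing.

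For the second isomorphism, however, you have a genuine gap. The Hochschild--Serre five-term sequence reads
\[
0\to \mathrm{H}^{1}(G,M)\to \mathrm{H}^{1}(\mathcal{U},\mathcal{E})\to \mathrm{H}^{1}(\mathcal{V},\mathcal{E}\vert_{\mathcal{V}})^{G}\to \mathrm{H}^{2}(G,M)\to \mathrm{H}^{2}(\mathcal{U},\mathcal{E}),
\]
and to collapse this to the isomorphism you want, you must show $\mathrm{H}^{1}(G,M)=\mathrm{H}^{2}(G,M)=0$. You never address these terms; your phrase ``the ramification contributions precisely cancel the local cohomology terms'' concerns the boundary/excision bookkeeping between $\mathcal{V}$ and $\tilde{\mathcal{V}}$ (and between $\mathcal{U}$ and $\mathbb{P}^{1}$), which is a separate issue and does not touch the $E_{2}^{p,0}$ row of Hochschild--Serre at all. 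Semisimplicity of the monodromy representation is not enough here: one needs the stronger input that $G$ is the \emph{full} symplectic group $\mathrm{Sp}(M)$, which the paper invokes via the torsion-freeness hypothesis. The vanishing then follows because the centre $\{\pm 1\}\subset \mathrm{Sp}(M)$ acts nontrivially on $M$ for $\ell>2$, forcing $\mathrm{H}^{i}(G,M)=0$.

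Once that vanishing is in place, the passage from $\mathcal{V}$ to $\tilde{\mathcal{V}}$ is handled by the paper via a parallel excision diagram comparing $\mathrm{H}^{1}(\mathbb{P}^{1},j_{\star}\mathcal{E})\hookrightarrow \mathrm{H}^{1}(\mathcal{U},\mathcal{E})$ with $\mathrm{H}^{1}(\tilde{\mathcal{V}},\tilde{\mathfrak{j}}^{\star}j_{\star}\mathcal{E})^{G}\hookrightarrow \mathrm{H}^{1}(\mathcal{V},\mathfrak{j}^{\star}\mathcal{E})^{G}$; this is cleaner than your proposed cancellation argument and avoids the Gysin bookkeeping you were worried about.
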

\begin{proof}
 The first isomorphism follows from the fact that $\mathcal{F}=\mathrm{R}^{1}\pi_{\star}\mu_{\ell}\simeq j_{\star}j^{\star}\mathcal{F}\simeq j_{\star}{\mathcal{E}}\oplus \underline{\mathcal{A}}$, where $\underline{\mathcal{A}}$ is the constant sheaf associated to $\mathrm{H}^{1}(\mathcal{X}, \mu_{\ell})$. This is because, due to torsion-freeness, hard-Lefschetz holds modulo $\ell$; and the cohomology of a constant sheaf vanishes on $\mathbb{P}^{1}$. \\ \\
Next, consider the Hochschild-Serre spectral sequence \cite[Theorem 14.9]{milnelec}
    $$
    \mathrm{H}^{i}(G, \mathrm{H}^{j}(\mathcal{V}, \mathcal{E}\vert_{\mathcal{V}}))\Rightarrow \mathrm{H}^{i+j}(\mathcal{U}, \mathcal{E})
    $$
    associated to the Galois cover $\mathcal{V}\rightarrow \mathcal{U}$. One has the five-term long exact sequence
    $$
    0\rightarrow \mathrm{H}^{1}(G, M)\rightarrow \mathrm{H}^{1}(\mathcal{U}, \mathcal{E})\rightarrow \mathrm{H}^{1}(\mathcal{V}, \mathcal{E}\vert_{\mathcal{V}})^{G}\rightarrow \mathrm{H}^{2}(G, M)\rightarrow \mathrm{H}^{2}(\mathcal{U}, \mathcal{E}).
    $$

Now, as the integral $\ell$ - adic cohomology groups are torsion-free, we know by \cite[Theorem 13]{gcd}, that $G=\mathrm{Sp}(\mathcal{E}_{\overline{\eta}})=\mathrm{Sp}(M)$. In particular, we have that $\mathrm{H}^{i}(G, M)=0$ for $1\leq i \leq 2$, as the centre has order $2$ and acts non-trivially on $M$ (for $\ell>2$, which we assume anyway)\footnote{see \cite{jones}}. Therefore, we have $$\mathrm{H}^{1}(\mathcal{U}, \mathcal{E})\simeq \mathrm{H}^{1}({\mathcal{V}}, \mathcal{E}\vert_{\mathcal{V}})^{G}\simeq \left(\mathrm{H}^{1}(\mathcal{V}, \mu_{\ell})\otimes M\right)^{G}.$$ The passage to $\tilde{\mathcal{V}}$ follows from the fact that global cohomology classes in $\mathrm{H}^{1}(\mathbb{P}^{1}, j_{\star}\mathcal{E})$ extend over the punctures as well (by excision) as in the following diagram with rows exact

    \[
\begin{tikzcd}
0 \arrow{r}  & \mathrm{H}^{1}(\mathbb{P}^{1}, j_{\star}\mathcal{E}) \arrow{r}  & \mathrm{H}^{1}(\mathcal{U}, \mathcal{E}) \arrow{r} \arrow{d}{\simeq} & \mathrm{H}^{2}_{\mathcal{Z}}(\mathbb{P}^{1}, j_{\star}\mathcal{E}) \\
0 \arrow{r} & \mathrm{H}^{1}(\tilde{\mathcal{V}}, \tilde{\mathfrak{j}}^{\star}j_{\star}\mathcal{E})^{G}  \arrow{r} & \mathrm{H}^{1}(\mathcal{V}, \mathfrak{j}^{\star}\mathcal{E})^{G} \arrow{r} & \left(\mathrm{H}^{2}_{\tilde{\mathcal{Z}}}(\tilde{\mathcal{V}}, \tilde{\mathfrak{j}}^{\star}j_{\star}\mathcal{E})\right)^{G}
\end{tikzcd}
\]
where $\tilde{\mathcal{Z}}\subset \tilde{\mathcal{V}}$ is the finite set of points lying above $\mathcal{Z}$ under $\tilde{\mathfrak{j}}:\tilde{\mathcal{V}}\rightarrow \mathbb{P}^{1}$.

\end{proof}

 \begin{remark}
        The group $\mathrm{Gal}(\overline{K}/K)$ acts on $M=\mathcal{E}_{\overline{\eta}}$ cyclotomically, as the arithmetic \'etale fundamental group of $\tilde{\mathcal{V}}$ does. Taking Tate twists into account, the isomorphism boils down to $$\mathrm{H}^{1}(\mathbb{P}^{1}, \mathcal{F})\simeq \left(\mathrm{H}^{1}(\tilde{\mathcal{V}}, \mu_\ell)(-1)\otimes \mu_{\ell}^{\dim M}\right)^{G}$$ as $\mathrm{Gal}(\overline{K}/K)$ -- modules, with the diagonal action (c.f. \cite[Theorem 2.3]{mascot2023explicit}). 
    \end{remark}
\begin{definition}
    We define the \textit{Edixhoven subspace} $\mathbb{E}$ as
    $$
    \mathbb{E}:=\sum_{\phi \in \mathrm{Hom}_{G}\left(M^{\vee},  \mathrm{H}^{1}(\tilde{\mathcal{V}}, \mu_{\ell})\right)} \mathrm{im}(\phi)\subset \mathrm{H}^{1}(\tilde{\mathcal{V}}, \mu_{\ell}).
    $$
\end{definition}
The point of the definition is the following observation.
$$
\left(\mathrm{H}^{1}(\tilde{\mathcal{V}}, \mu_{\ell})\otimes M\right)^{G}\simeq \mathrm{Hom}_{G}\left(M^{\vee}, \mathrm{H}^{1}(\tilde{\mathcal{V}}, \mu_{\ell})\right)\simeq \mathrm{Hom}_{G}\left(M^{\vee}, \mathbb{E}\right)\simeq \left(\mathbb{E}\otimes M\right)^{G}
$$
where the first isomorphism is from standard tensor-hom and the second is because, by definition of $\mathbb{E}$, all $G$ - equivariant homomorphisms from $M^{\vee}$ to $\mathrm{H}^{1}(\tilde{\mathcal{V}}, \mu_{\ell})$ actually have image inside $\mathbb{E}$.

The algorithmic upshot is that $\dim \mathbb{E}$ is independent of $\ell$ (being bounded by $\beta_2\cdot \dim M$, where $\beta_2$ is the second Betti number of $\mathcal{X}$), whereas $\dim \mathrm{H}^{1}(\tilde{\mathcal{V}}, \mu_{\ell})=2\mathfrak{g}$, where $\mathfrak{g}$ depends polynomially on $\ell$.

\subsection{Geometric Galois action}
\label{subsec:geomgal}
 In this subsection, we describe how to compute the $G$-action on points of $\tilde{\mathcal{V}}$.
\begin{itemize}
    \item Consider the primitive element $\bm{\tau}$ for the field extension $\overline{K}(\tilde{\mathcal{V}})/\overline{K}(\mathbb{P}^{1})$.
    \item The extension has Galois group $G$, the geometric monodromy group. For each generator $\rho_{\ell}(\sigma_j)\in G$ for $1\leq j< r$, express $\sigma_j(\bm{\tau})$ as a rational function of $\bm{\tau}$, akin to Algorithm~\ref{algo:mono}.
    \item As each $\sigma_j$ gives rise to a birational automorphism of the smooth projective curve $\tilde{\mathcal{V}}$, it hence extends to an isomorphism, which can be given in terms of polynomials, using an efficient normalisation algorithm \cite{Kozen}.
    \item Hence simply evaluate the corresponding isomorphism on the input point, this gives the $G$ -- action on the points of $\tilde{\mathcal{V}}$.

    \item This extends to an action on $\mathrm{Jac}(\tilde{\mathcal{V}})$, via divisors.
\end{itemize}

\subsection{Isolating the Edixhoven subspace}
\label{subsec:arithgal}
% In this subsection, indicate how to compute the action of $\mathrm{Gal}(\overline{K}/K)$ on $
% \mathrm{H}^{2}(\mathcal{X}, \mathbb{Z}/\ell \mathbb{Z})$ via its action on $\mathbb{E}\subset \mathrm{H}^{1}(\tilde{\mathcal{V}}, \mathbb{Z}/\ell \mathbb{Z})$. 
We first give a method to isolate the Edixhoven subspace $\mathbb{E}\subset \mathrm{Jac}(\tilde{\mathcal{V}})[\ell]$  that is relevant for the Galois contribution on the second \'etale cohomology of the input surface. For this, we make use of an auxiliary prime $\mathfrak{P}$ of good reduction, distinct from $\ell$, and work with the positive-characteristic curve $\tilde{\mathcal{V}}_{\mathfrak{P}}$.

\begin{algorithm}
    \caption{\texttt{Computing the Edixhoven subspace modulo} $\mathfrak{P}$}
    \label{algo:edix}
    \begin{itemize}
        \item \textbf{Input:} The curve $\tilde{\mathcal{V}}$ and a prime $\mathfrak{P}$.

        \item \textbf{Output:} The mod-$\mathfrak{P}$ Edixhoven subspace $\mathbb{E}_{\mathfrak{P}}\subset \mathrm{Jac}(\tilde{\mathcal{V}}_{\mathfrak{P}})[\ell]$.
    \end{itemize}
    \begin{algorithmic}[1]

    \STATE \label{algo:edix1} Compute the zeta function $Z(\tilde{\mathcal{V}}_{\mathfrak{P}}/ \mathbb{F}_{\mathfrak{P}}, T)$ by counting points on $\tilde{\mathcal{V}}$ over extensions of $\mathbb{F}_{\mathfrak{P}}$, using a $\mathfrak{P}$ - adic algorithm such as that of Harvey \cite{harvey} (the curve case, specifically, is treated in \cite{kyng}) or Lauder-Wan \cite{LW}.

    \STATE \label{algo:edix2} Compute a basis of each space $$\mathcal{S}_i:=\mathrm{Jac}(\tilde{\mathcal{V}}_{\mathfrak{P}})[\ell](\mathbb{F}_{\mathfrak{P}^{i}})$$ as sums of $\tilde{\mathcal{V}}_{\mathfrak{P}}$ -- points using \cite[Theorem 1]{couv}, with the knowledge of the zeta function, as computed in Step~\ref{algo:edix1}.
    
    \STATE \label{algo:edix3} Compute the $G$ -- action on each subspace $\mathcal{S}_i$. In particular, for each generator $\rho_{\ell}(\sigma_j)$, compute its action as a matrix on a basis of $\mathcal{S}_i$ for each $i$ upto a bound $J=\mathrm{poly}(\ell)$, using~\ref{subsec:geomgal} and \cite[Theorem 1]{couv}. If $G$ does not act on $\mathcal{S}_i$, (i.e., some elements are moved outside it), increment $i$.

    \STATE \label{algo:edix4}  Compute the space of vanishing cycles $M=\mathcal{E}_{\overline{\eta}}\subset \mathcal{F}_{\overline{\eta}}$ with $G$ -- action using  (\ref{subsec:computevan}), and the reduction $M_{\mathfrak{P}}$. Next, compute each element $\phi\in \mathrm{Hom}_{G}(M^{\vee}_{\mathfrak{P}}, \mathcal{S}_i)$ as a matrix, and a basis of the sum of the images. Choosing bases for $M_{\mathfrak{P}}^{\vee}$ and $\mathcal{S}_{i}$, a basis for the space of $G$ -- equivariant homs can be computed by setting up a linear system. In other words, the hom space is given by the maps $\phi$ satisfying the  system $\phi(g\cdot m)=g\cdot \phi(m)$, where $g$ runs over $G$ and $m$ runs over a basis for $M_{\mathfrak{P}}^{\vee}$. Write
    $$
    \mathbb{E}^{(i)}_{\mathfrak{P}}=\sum_{\phi\in \mathrm{Hom}_{G}(M^{\vee}_{\mathfrak{P}}, \mathcal{S}_i)}\mathrm{im}(\phi).
    $$

    \STATE \label{algo:edix5} Compute the invariant space $(\mathbb{E}^{(i)}_{\mathfrak{P}}\otimes_{\mathbb{F}_{\ell}}M_{\mathfrak{P}})^{G}$ and its dimension. If it equals $\beta_2-2$, return $\mathbb{E}^{(i)}_{\mathfrak{P}}$.
        
    \end{algorithmic}
\end{algorithm}

\begin{remark}
    We abuse notation by using $G$ to also refer to the monodromy of the mod-$\mathfrak{P}$ Lefschetz pencil. Provided $\mathfrak{P}$ is large enough compared to the data of the surface, there is an equality between the number of singular fibres in char zero and in positive char. Further, let $u\in \mathcal{U}$ and $\mathsf{u}\in \mathcal{U}_{\mathfrak{P}}$ such that $u\equiv \mathsf{u} \mod \mathfrak{P}$. Let $\overline{\xi}=\mathrm{spec}(\overline{\mathbb{F}_{\mathfrak{P}}(t)})$ be the geometric generic point. Then, we can consistently transport the $G$-action on $\mathcal{F}_{\overline{\eta}}$ to $\mathcal{F}_{\overline{\xi}}$ via the diagram

    \[
\begin{tikzcd}
\mathcal{F}_{\overline{\eta}} \arrow{r}{\phi_{u}^{-1}} \arrow{d}[swap]{} & \mathcal{F}_u \arrow{d}{\varrho_{u}} \\%
\mathcal{F}_{\overline{\xi}} \arrow{r}{\varphi_{\mathsf{u}}^{-1}}& \mathcal{F}_{\mathsf{u}}
\end{tikzcd}
\]
where $\phi_u$ is a choice of cospecialisation at $u$, $\varphi_{\mathsf{u}}$ is the corresponding positive characteristic choice (obtained via coefficient-wise reduction of Laurent series), and $\varrho_u$ is the char-zero to positive-char comparison isomorphism coming from reduction mod $\mathfrak{P}$. Thus, we have an unambiguous $G$ - action on $M_{\mathfrak{P}}=\mathcal{F}_{\overline{\xi}}$. 
\end{remark}

\begin{lemma}
\label{edixlem}
The quantity $J$ in Step~\ref{algo:edix3} of Algorithm~\ref{algo:edix} can be assumed to be bounded by a polynomial in $\ell$.
\end{lemma}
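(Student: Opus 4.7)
The plan is to show $J$ is polynomial in $\ell$ by separately bounding two quantities: (a) the multiplicative order of $F_{\mathfrak{P}}$ acting on $\mathbb{E}_{\mathfrak{P}}$, which controls when $\mathbb{E}_{\mathfrak{P}} \subset \mathcal{S}_i$, and (b) the degree of the field of definition of the $G$-action on $\tilde{\mathcal{V}}_{\mathfrak{P}}$, which controls when $\mathcal{S}_i$ is $G$-stable. Taking $J$ to be the least common multiple of the two bounds will then suffice.

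The first step is to observe that $d := \dim_{\mathbb{F}_{\ell}} \mathbb{E}$ is a constant independent of $\ell$. By the algorithmic upshot recorded just after the definition of $\mathbb{E}$, one has $\dim \mathbb{E} \leq \beta_{2} \cdot \dim M$, where $\beta_{2}$ is a topological invariant of the fixed surface $\mathcal{X}$, and $\dim M \leq \#\mathcal{Z} \leq D^{N+1}$ because $M = \mathcal{E}_{\overline{\eta}}$ is spanned by the vanishing cycles $\delta_{z_{j}}$ (Section~\ref{subsec:etsurf}). Both bounds depend only on the input data.

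For (a), since $F_{\mathfrak{P}}$ preserves $\mathbb{E}_{\mathfrak{P}}$, it acts as an element of $\mathrm{GL}(d, \mathbb{F}_{\ell})$, whose order divides $|\mathrm{GL}(d, \mathbb{F}_{\ell})| < \ell^{d^{2}}$. This is polynomial in $\ell$ precisely because $d$ is constant. For (b), since $G \subset \mathrm{Sp}(M_{\mathfrak{P}})$, we have $|G| \leq |\mathrm{Sp}(\dim M, \mathbb{F}_{\ell})|$, again polynomial in $\ell$. The Frobenius permutes the finite set $G$, so each $g \in G$ has a $\langle F_{\mathfrak{P}}\rangle$-orbit of size dividing $|G|$, and is therefore defined over $\mathbb{F}_{\mathfrak{P}^{|G|}}$.

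The main obstacle I anticipate is justifying that the $G$-action and Frobenius commute on $\tilde{\mathcal{V}}_{\mathfrak{P}}$ in the required sense, i.e., that if $g \in G$ is defined over $\mathbb{F}_{\mathfrak{P}^{i}}$ then $g(\mathcal{S}_{i}) \subset \mathcal{S}_{i}$. This reduces to showing that the $G$-action on $\tilde{\mathcal{V}}$ in characteristic zero descends consistently under reduction; it follows from $\mathfrak{P}$ being a prime of good reduction together with the transport of the $G$-action outlined via the analytic--\'etale comparison (Table~\ref{tab:comp}) and the remark preceding the lemma, which identifies $G$ with its reduction through coefficient-wise reduction of the cospecialisations $\phi_u$.
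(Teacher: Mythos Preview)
Your proof is essentially the same as the paper's: both argue that (a) $\dim \mathbb{E}_{\mathfrak{P}}$ is bounded independently of $\ell$, so Frobenius acts on it through a group of size $\mathrm{poly}(\ell)$, and (b) the $G$-automorphisms of $\tilde{\mathcal{V}}_{\mathfrak{P}}$ are defined over an extension of degree at most $|G|\leq |\mathrm{Sp}(\dim M,\mathbb{F}_\ell)|=\mathrm{poly}(\ell)$. The one difference is that for (a) the paper does not assert directly that $F_{\mathfrak{P}}$ preserves $\mathbb{E}_{\mathfrak{P}}$; instead it routes through the Galois representation on $\mathrm{H}^{2}(\mathcal{X}_{\mathfrak{P}},\mu_\ell)$, which has rank $\beta_2$ independent of $\ell$, and then invokes Theorem~\ref{thm:galiso} to transport this to $(\mathbb{E}_{\mathfrak{P}}\otimes M_{\mathfrak{P}})^{G}$ --- thereby sidestepping the Frobenius-normalises-$G$ issue you correctly flag in your last paragraph, though once that point is granted the two routes are interchangeable.
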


\begin{proof}
    We first show that the Edixhoven subspace $\mathbb{E}_{\mathfrak{P}}\subset \mathrm{Jac}(\tilde{\mathcal{V}}_{\mathfrak{P}})[\ell]$ is defined over a field extension of $\mathbb{F}_{\mathfrak{P}}$ of degree at most bounded by a polynomial in $\ell$. We notice that via its action on the positive characteristic surface $\mathcal{X}_{\mathfrak{P}}$, we have a Galois representation  $$\mathrm{Gal}(\overline{\mathbb{F}}_{\mathfrak{P}}/\mathbb{F}_{\mathfrak{P}}) \rightarrow \mathrm{GL}\left(\mathrm{H}^{2}(\mathcal{X}_{\mathfrak{P}}, \mu_\ell)\right).$$
    The general linear group is of rank $\beta_2$ (the second Betti number of $\mathcal{X}$, which is independent of $\ell$ for most, and indeed any large enough $\ell$, compared to the data of $\mathcal{X}$) over the field $\mathbb{F}_{\ell}$, hence has size bounded by a polynomial in $\ell$. Further, this restricts to an action on $\mathrm{H}^{1}(\mathbb{P}^{1}, \mathcal{F})$. Therefore, by Theorem~\ref{thm:galiso}, it is sufficient to show that $\mathbb{E}_{\mathfrak{P}}$ has dimension independent of $\ell$. Using the tensor-hom duality, we see that 
    $$
    \left(\mathbb{E}_{\mathfrak{P}}\otimes M_{\mathfrak{P}}\right)^{G}\simeq \left(\mathrm{H}^{1}(\tilde{\mathcal{V}}_{\mathfrak{P}}, \mu_{\ell})\otimes M_{\mathfrak{P}}\right)^{G}\simeq \mathrm{Hom}_{G}\left(M_{\mathfrak{P}}^{\vee}, \mathrm{H}^{1}(\tilde{\mathcal{V}}, \mu_{\ell})\right)\simeq \mathrm{Hom}_{G}\left(M_{\mathfrak{P}}^{\vee}, \mathbb{E}_{\mathfrak{P}}\right)
    $$
as $\mathbb{E}_{\mathfrak{P}}$ is the sum of the images of each $\phi\in \mathrm{Hom}_{G}(M^{\vee}_{\mathfrak{P}}, \mathrm{H}^{1}(\tilde{\mathcal{V}}_{\mathfrak{P}}, \mu_\ell))$. The hom space has dimension bounded by $\beta_2$, and the dimension of $M$ is independent of $\ell$, so this shows it.  \\ \\
However, each subspace $\mathcal{S}_i$ may not be mapped to itself under $G$. This is easily tested on elements, after applying $G$ -- action and using the $\mathbb{F}_{\mathfrak{P}^{i}}$ -- frobenius. But, the group $G$ acts on $\tilde{\mathcal{V}}_{\mathfrak{P}}$ via automorphisms defined over an extension $\mathbb{F}_{\mathfrak{P}'}/\mathbb{F}_{\mathfrak{P}}$ of degree at most $\mathrm{poly}(\ell)$, hence in particular, $\mathrm{Jac}(\tilde{\mathcal{V}}_{\mathfrak{P}})[\ell](\mathbb{F}_{\mathfrak{P}'})$ carries a $G$ -- action. In other words, the subspace we are looking for, $\mathbb{E}_{\mathfrak{P}}$, can be found in an $\mathcal{S}_i$ that $G$ does act on, for some $i$ bounded by a polynomial in $\ell$.  The $G$ -- action can then be computed via a basis as in \cite[Theorem 1]{couv}. 
\end{proof}

\begin{remark}
    See also \cite[Lemma 5.6]{leve} for an alternate proof of the fact that the relevant subspace can be found over a poly-bounded extension.
\end{remark}

\begin{theorem}
    \label{thm:edixalgo}
    Algorithm~\ref{algo:edix} outputs the subspace $\mathbb{E}_{\mathfrak{P}}\subset \mathrm{H}^{1}(\tilde{\mathcal{V}}_{\mathfrak{P}}, \mu_\ell)$.
\end{theorem}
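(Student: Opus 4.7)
The plan is to verify the correctness of Algorithm~\ref{algo:edix} step by step, and then argue that the stopping criterion in Step~\ref{algo:edix5} is precisely what certifies that the output equals $\mathbb{E}_{\mathfrak{P}}$. The essential input is Theorem~\ref{thm:galiso}, which tells us that $\mathrm{H}^{1}(\mathbb{P}^{1},\mathcal{F}) \simeq (\mathrm{H}^{1}(\tilde{\mathcal{V}},\mu_{\ell})\otimes M)^{G}$ as $\mathrm{Gal}$-modules, combined with the dimension count $\dim \mathrm{H}^{1}(\mathbb{P}^{1},\mathcal{F}) = \beta_{2}-2$ coming from Section~\ref{subsec:etsurf} (since the two extra summands are $\langle \gamma_{E}\rangle$ and $\langle \gamma_{F}\rangle$).

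First I would verify that Step~\ref{algo:edix1} correctly computes $Z(\tilde{\mathcal{V}}_{\mathfrak{P}}/\mathbb{F}_{\mathfrak{P}},T)$ via the cited $\mathfrak{P}$-adic algorithms of Harvey or Lauder--Wan; Step~\ref{algo:edix2} then produces bases for $\mathcal{S}_{i}=\mathrm{Jac}(\tilde{\mathcal{V}}_{\mathfrak{P}})[\ell](\mathbb{F}_{\mathfrak{P}^{i}})$ via Couveignes' algorithm, since the zeta function determines the structure of the $\ell$-torsion over each extension. In Step~\ref{algo:edix3}, checking whether $G$ stabilises $\mathcal{S}_{i}$ is a finite test on basis elements using the explicit geometric Galois action of Section~\ref{subsec:geomgal}, so one increments $i$ until a suitable field of definition for the $G$-action is found. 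In Step~\ref{algo:edix4}, the space $\mathrm{Hom}_{G}(M_{\mathfrak{P}}^{\vee},\mathcal{S}_{i})$ is cut out by the linear system $\phi(g\cdot m)=g\cdot \phi(m)$ over generators of $G$ and a basis of $M_{\mathfrak{P}}^{\vee}$; summing the images gives $\mathbb{E}^{(i)}_{\mathfrak{P}}$ by definition.

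For the stopping criterion, the key observation is that by the tensor-hom duality used in the proof of Lemma~\ref{edixlem},
\begin{equation*}
\bigl(\mathbb{E}_{\mathfrak{P}}\otimes M_{\mathfrak{P}}\bigr)^{G}\;\simeq\;\mathrm{Hom}_{G}(M_{\mathfrak{P}}^{\vee},\mathbb{E}_{\mathfrak{P}})\;\simeq\;\mathrm{Hom}_{G}(M_{\mathfrak{P}}^{\vee},\mathrm{H}^{1}(\tilde{\mathcal{V}}_{\mathfrak{P}},\mu_{\ell}))\;\simeq\;\bigl(\mathrm{H}^{1}(\tilde{\mathcal{V}}_{\mathfrak{P}},\mu_{\ell})\otimes M_{\mathfrak{P}}\bigr)^{G}
\end{equation*}
which has dimension exactly $\beta_{2}-2$ by Theorem~\ref{thm:galiso} and the decomposition in Section~\ref{subsec:etsurf}. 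If $\mathbb{E}^{(i)}_{\mathfrak{P}}\subsetneq \mathbb{E}_{\mathfrak{P}}$, then $\dim(\mathbb{E}^{(i)}_{\mathfrak{P}}\otimes M_{\mathfrak{P}})^{G}<\beta_{2}-2$, because a strictly smaller $G$-stable subspace of $\mathbb{E}_{\mathfrak{P}}$ yields strictly fewer $G$-equivariant maps from $M_{\mathfrak{P}}^{\vee}$. Conversely, as soon as $\mathbb{E}^{(i)}_{\mathfrak{P}}=\mathbb{E}_{\mathfrak{P}}$, the dimension equals $\beta_{2}-2$ and the algorithm halts with the correct output.

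The main obstacle is ensuring that some $i=\mathrm{poly}(\ell)$ actually works, i.e., that all of $\mathbb{E}_{\mathfrak{P}}$ lies in some $\mathcal{S}_{i}$ on which $G$ acts; this is handled by Lemma~\ref{edixlem}, which bounds the field of definition of $\mathbb{E}_{\mathfrak{P}}$ via the rank of the Galois action on $\mathrm{H}^{2}(\mathcal{X}_{\mathfrak{P}},\mu_{\ell})$ together with the extension over which the $G$-automorphisms of $\tilde{\mathcal{V}}_{\mathfrak{P}}$ are defined. Combining these observations yields termination and correctness, completing the proof.
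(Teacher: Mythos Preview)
Your proposal is correct and follows essentially the same approach as the paper's own proof, which is quite terse: the paper simply notes that $\mathbb{E}_{\mathfrak{P}}$ is the sum of all $G$-submodules isomorphic to $M_{\mathfrak{P}}$, invokes Lemma~\ref{edixlem} for the field-of-definition bound, and asserts that the dimension check in Step~\ref{algo:edix5} certifies correctness. Your write-up fleshes out the same skeleton with the explicit tensor--hom identifications and the contrapositive argument that a proper inclusion $\mathbb{E}^{(i)}_{\mathfrak{P}}\subsetneq \mathbb{E}_{\mathfrak{P}}$ forces a strict drop in $\dim\mathrm{Hom}_{G}(M_{\mathfrak{P}}^{\vee},\,\cdot\,)$, which is exactly what the paper leaves implicit.
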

\begin{proof}
    We note that $\mathbb{E}_{\mathfrak{P}}$ is the sum of all subspaces of $\mathrm{Jac}(\tilde{\mathcal{V}})[\ell]$ isomorphic to $M_{\mathfrak{P}}$ as $G$-modules. Further, by Lemma~\ref{edixlem}, it can be found within a poly-bounded extension. The algorithm only stops when the invariant subspace has the correct dimension, indicating that we have found the mod-$\mathfrak{P}$ Edixhoven subspace.
\end{proof}
We now indicate how to Hensel- lift torsion points $\mathfrak{P}$ -- adically, following work of Mascot \cite{mascothensel}. We recall the following.

\begin{theorem}[Mascot]
\label{thm:mascot}
 Let $C$  be a model for a nice algebraic curve of genus $g'$ over a number field $L$  given via equations, and let $\rho$ be a mod-$\ell$ $\mathrm{Gal}(\overline{L}/L)$ representation contained in a subspace $S\subset \mathrm{Jac}(C)[\ell]$ of dimension $s$. Let $\mathfrak{P}\subset \mathcal{O}_L$ be a prime of good reduction for $C$ distinct from $\ell$, and assume we are given $P_1(C_{\mathfrak{P}}/\mathbb{F}_{\mathfrak{P}}, T)$ \footnote{i.e., the numerator of the zeta function of $C_{\mathfrak{P}}$}. Further, assume we can isolate the subspace $S_{\mathfrak{P}}\subset \mathrm{Jac}(C_{\mathfrak{P}})[\ell]$. Then, given an accuracy parameter $e$, there exists an algorithm to $\mathfrak{P}$-adically lift the torsion subspace $S_{\mathfrak{P}}$ up to accuracy $\mathfrak{P}^e$, running in time
 $$
 \widetilde{O}(\mathrm{poly}(g'\cdot \log(\#\mathbb{F}_{\mathfrak{P}})\cdot e\cdot \ell^{s})).
 $$
 Further, if the accuracy parameter is sufficient to lift the  subspace to $S$, then the associated $\mathrm{Gal}(\overline{L}/L)$ representation is computed with the same complexity.
\end{theorem}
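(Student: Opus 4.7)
The plan is to follow the strategy of Mascot \cite{mascothensel} and build the lift by an $\ell$-adic analogue of Newton iteration on the Jacobian, exploiting the fact that $\mathfrak{P}\neq \ell$ makes the multiplication-by-$\ell$ map étale at $\ell$-torsion points. First, I would fix a computable representation of divisor classes on $C$ (say Khuri-Makdisi, using Theorem~\ref{thm:jacarith}), so that the group law and the multiplication-by-$\ell$ endomorphism are available as explicit rational maps with coefficients in $\mathcal{O}_L/\mathfrak{P}^e$. Pick a basis $P_{1,\mathfrak{P}},\ldots,P_{s,\mathfrak{P}}$ of $S_{\mathfrak{P}}$ (possible since we are given the $\mathfrak{P}$-adic zeta-numerator, which pins down $\#\mathrm{Jac}(C_\mathfrak{P})(\mathbb{F}_\mathfrak{P})$, combined with the hypothesis that $S_\mathfrak{P}$ is isolable).

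Next, I would lift each $P_{i,\mathfrak{P}}$ one at a time. Since $\mathfrak{P}$ is of good reduction and $\mathfrak{P}\nmid \ell$, the scheme $\mathrm{Jac}(C)[\ell]$ is étale over $\mathrm{Spec}(\mathcal{O}_{L,\mathfrak{P}})$ in a neighbourhood of each torsion point, so the Jacobian criterion of smoothness applies to the system ``$[\ell]Q=0$'' at $Q=P_{i,\mathfrak{P}}$. Hensel's lemma therefore produces a unique lift $P_i$ in $\mathrm{Jac}(C)(\mathcal{O}_{L,\mathfrak{P}})$, computed by Newton iteration: given an approximation $Q^{(k)}$ at precision $\mathfrak{P}^{2^k}$, set
\[
Q^{(k+1)}:=Q^{(k)}-\bigl(d[\ell]_{Q^{(k)}}\bigr)^{-1}\bigl([\ell]Q^{(k)}\bigr),
\]
where the differential $d[\ell]$ is just multiplication by $\ell$ on the tangent space and is therefore invertible modulo $\mathfrak{P}$ (using $\ell\in (\mathcal{O}_L/\mathfrak{P})^\times$). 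This doubles the precision at each step, so $O(\log e)$ iterations suffice to reach accuracy $\mathfrak{P}^e$. Each step costs polynomial time in $g'$, $\log(\#\mathbb{F}_\mathfrak{P})$, and the current precision, by the effectivity of divisor arithmetic (Theorem~\ref{thm:jacarith}).

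The factor $\ell^s$ in the runtime reflects the fact that the lifted subspace $S$ may have to be tracked as a basis of $s$ independent $\ell$-torsion elements, and that the Galois action (when requested) is recovered by matching the images of the $\ell^s$ elements of $S$ under Frobenius against the $\mathfrak{P}^e$-approximate lifts; this match is performed by a table lookup over $S_\mathfrak{P}$, enumerating all $\ell^s$ elements. Once the accuracy parameter $e$ exceeds the height bound (Section~\ref{app:height}) for the coordinates of $\ell$-torsion points in the chosen model, the lifted divisors can be rationally reconstructed over $\overline{L}$, and the $\mathrm{Gal}(\overline{L}/L)$-action is read off from its action on the reconstructed points, which are then permuted in the basis of $S$.

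The main technical obstacle in a rigorous treatment is controlling the precision loss in the Newton step caused by inverting the differential and by the reduction maps between divisor representations; this is where Mascot's careful bookkeeping in \cite{mascothensel} is needed, together with a uniform bound on the valuation of the resultant that certifies smoothness of the $[\ell]$-fibre above $S_\mathfrak{P}$. Once that is in place, the quoted $\widetilde{O}$-bound follows by combining the $O(\log e)$ Newton steps with the per-step cost and the $\ell^s$ enumeration, yielding the stated complexity.
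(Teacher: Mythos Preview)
The paper does not actually prove this theorem: its entire proof is the one-line citation ``See \cite[\S 4, 5, 6]{mascothensel}.'' Immediately afterwards the paper gives an informal sketch of Mascot's algorithm, and your proposal aligns with that sketch on the main point: Hensel-lift a basis of $S_{\mathfrak{P}}$ using the \'etaleness of $[\ell]$ over $\mathcal{O}_{L,\mathfrak{P}}$ (since $\mathfrak{P}\nmid\ell$), doubling precision at each Newton step, with divisor arithmetic handled \`a la Khuri--Makdisi.

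Where your account diverges slightly from Mascot's (and from the paper's sketch) is in the last stage, the recovery of the Galois representation and the source of the $\ell^{s}$ factor. You attribute $\ell^{s}$ to a table-lookup matching of Frobenius images, followed by rational reconstruction of the individual lifted points. In Mascot's algorithm the $\ell^{s}$ arises instead from enumerating \emph{all} $\ell^{s}$ $\mathbb{F}_{\ell}$-linear combinations of the lifted basis, evaluating a fixed $L$-rational map $\bm{\alpha}:\mathrm{Jac}(C)\dashrightarrow\mathbb{A}^{1}$ at each of them, and forming the monic polynomial over $L$ whose roots are these $\ell^{s}$ values; the Galois representation is then read off from the splitting field of that polynomial. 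This matters because one does not reconstruct each torsion point over $\overline{L}$ separately (their individual heights need not be small), but rather reconstructs the \emph{coefficients} of this degree-$\ell^{s}$ polynomial, which are symmetric in the points and hence lie in $L$ with controllable height. Your Hensel-lifting core is right; the extraction step should be amended to this ``evaluate a rational map, then take the polynomial of values'' mechanism.
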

\begin{proof}
    See \cite[\S 4, 5, 6]{mascothensel}.
\end{proof}
We now give a brief, informal sketch of Mascot's algorithm for completeness, based on the outline \cite[\S 1.2]{mascothensel}. For simplicity, assume the base number field is $\mathbb{Q}$, and we have a rational prime $\mathbf{p}$. 
\begin{itemize}
    \item Compute a basis of $S_{\mathbf{p}}\subset \mathrm{Jac}(C_{\mathbf{p}})[\ell](\mathbb{F}_{\mathbf{q}})$, where $\mathbb{F}_{\mathbf{q}}/\mathbb{F}_{\mathbf{p}}$ is an extension over which the subspace $S_{\mathbf{p}}$ becomes rational.

    \item Given the accuracy parameter $e$, Hensel-lift the basis points to approximation $O(\mathbf{p}^{e})$ in $\mathrm{Jac}(C)(\mathbb{Q}_{\mathbf{q}})$, i.e., points of $\mathrm{Jac}(C)(\mathbb{Z}_{\mathbf{q}}/\mathbf{p}^{e})$.

    \item Compute all the possible $\mathbb{F}_{\ell}$ -- linear combinations of this basis. This is a model of $S$ over $\mathbb{Z}_{\mathbf{q}}/\mathbf{p}^{e}$, consisting of $\ell^{s}$ points.

    \item Write a rational map $\bm{\alpha}: \mathrm{Jac}(C)\dashrightarrow \mathbb{A}^{1}$ defined over the field $\mathbb{Q}$, and evaluate at the $\ell^{s}$ points constructed in the above step. Make sure the values are distinct, else use another rational map.

    \item Form the monic polynomial whose roots are these values and output it.
\end{itemize}
\subsection{Height of divisors in the Edixhoven subspace}
\label{subsec:htedix}
In this subsection, we bound the height of the divisors we are interested in, coming from the Edixhoven subspace. The main estimate is the following.
\begin{theorem}
    \label{thm:edixdivhtbd}
    For each  $x\in \mathbb{E}\subset \mathrm{Jac}(\tilde{\mathcal{V}})[\ell]$, we have
    \begin{equation}
        h(\mathbf{D}_x)\leq \mathrm{poly}(\ell),
    \end{equation}
where $\mathbf{D}_x$ is a representation of the degree zero divisor in $\mathrm{Jac}(\tilde{\mathcal{V}})[\ell]$ corresponding to $x$, as a sum of points in $\tilde{\mathcal{V}}$.
\end{theorem}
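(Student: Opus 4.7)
The plan is to establish the bound in two stages: first bound the height of torsion points on the Jacobian of $\tilde{\mathcal{V}}$ as projective points in a chosen embedding, and then control the cost of Abel-inverting to a divisorial representation. The curve $\tilde{\mathcal{V}}$ was built in Section~\ref{subsec:gen} as the normalisation of $\mathbb{P}^{1}$ in the splitting field $\mathbf{K}$ of the $\ell$-division ideal, and by the analysis there combined with the primitive-element computation we have $\mathfrak{g} = \mathrm{poly}(\ell)$ and a model of $\tilde{\mathcal{V}}$ whose defining equations have height $\mathrm{poly}(\ell)$ (invoking Section~\ref{app:height}). Likewise the Jacobian $\mathrm{Jac}(\tilde{\mathcal{V}})$ admits, via Theorem~\ref{thm:eqnjac}, an explicit projective embedding whose equations and addition law have height polynomial in $\ell$.

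Given this setup, I would apply Theorem~\ref{thm:tors_height} directly to $\mathrm{Jac}(\tilde{\mathcal{V}})$ to conclude that every $\ell$-torsion point, viewed in the chosen embedding of the Jacobian, has Weil height bounded by $\mathrm{poly}(\ell)$. This step is independent of whether $x$ happens to lie in $\mathbb{E}$ or not: the bound applies uniformly to all of $\mathrm{Jac}(\tilde{\mathcal{V}})[\ell]$, so in particular to $\mathbb{E}$.

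Next, to pass from a torsion point $x$ to the divisorial representation $\mathbf{D}_x$, I would use the inverse Abel-Jacobi map (Algorithm~\ref{algo:abel} from Section~\ref{app:jac}). By Riemann-Roch, up to a fixed base point $P_{0}$ of $\tilde{\mathcal{V}}$ one can write $\mathbf{D}_x = D - \mathfrak{g}\cdot P_0$ where $D$ is an effective divisor of degree at most $\mathfrak{g}$. Computing $D$ reduces to solving a zero-dimensional polynomial system on $\tilde{\mathcal{V}}$ cut out by the linear forms pulled back from the embedding of $\mathrm{Jac}(\tilde{\mathcal{V}})$ specified by $x$; the system has degree $\mathrm{poly}(\ell)$ and, by the previous paragraph, coefficients of height $\mathrm{poly}(\ell)$. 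Applying the effective arithmetic Bezout theorem from Section~\ref{app:height}, the coordinates of every solution $P_i$ have height $\mathrm{poly}(\ell)$.

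The main obstacle I anticipate is keeping track of the arithmetic overhead during Abel inversion: one must ensure that intersecting with the Theta divisor (or its translates) does not inflate heights super-polynomially, and that the rational functions involved in representing $D$ with respect to $P_0$ remain of controlled height. This is essentially a height-of-specialisation estimate along a morphism of bounded complexity, and should fall out of the same toolkit (height of fibres of a polynomial map) used throughout Section~\ref{app:height}. Once this is in place, summing over the at most $\mathfrak{g} = \mathrm{poly}(\ell)$ points of the support, together with the trivial bound $|n_i| \le \mathfrak{g}$ on the multiplicities, yields $h(\mathbf{D}_x) \leq \mathrm{poly}(\ell)$ as claimed.
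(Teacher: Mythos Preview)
Your approach has a genuine gap at the first step. You propose to apply Theorem~\ref{thm:tors_height} directly to $\mathrm{Jac}(\tilde{\mathcal{V}})$, but look carefully at what that theorem actually delivers: its proof goes through the Zarhin--Manin comparison (Theorem~\ref{thm:zarhin}), whose constants $c_1$ and $c_2$ contain factors of size $2^{2g}$. Theorem~\ref{thm:tors_height} only claims polynomial dependence on the height of the defining equations, the extension degree, and the log-discriminant --- the dependence on the genus $g$ is left unspecified, and in fact is exponential via Zarhin--Manin. Since $\tilde{\mathcal{V}}$ has genus $\mathfrak{g} = \mathrm{poly}(\ell)$, applying Theorem~\ref{thm:tors_height} to it yields a bound of shape $2^{\mathrm{poly}(\ell)}$, which is exponential in $\ell$, not polynomial. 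The same exponential blow-up infects the Anderson embedding of Theorem~\ref{thm:eqnjac}, whose complexity is noted in Section~\ref{app:jac} to be $\exp(\mathrm{poly}(g))$.

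This is precisely why the paper does \emph{not} go through the naive Weil-height/N\'eron--Tate comparison. Instead it invokes the Arakelov-theoretic machinery of Couveignes--Edixhoven (their Theorem~9.1.3), which bounds the height of a point on the curve supporting a torsion divisor in terms of quantities --- the Faltings height of $\tilde{\mathcal{V}}$, sup-norms of Arakelov--Green functions, theta-function norms, and certain intersection numbers --- each of which can be controlled \emph{polynomially} in the genus. The key external input making this work is Javanpeykar's theorem (Theorem~\ref{javanp}), which bounds $\mathfrak{h}_F(\tilde{\mathcal{V}})$ polynomially in $\deg(\tilde{\mathfrak{j}})$ because the ramification locus $\mathcal{Z}$ of $\tilde{\mathfrak{j}}:\tilde{\mathcal{V}}\to\mathbb{P}^1$ is independent of $\ell$. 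Your second stage (Abel inversion and arithmetic B\'ezout) is not the issue; the problem is that the torsion-height bound you feed into it is already too weak.
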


\begin{proof}
    We have to show that for $x\in \mathbb{E}\subset \mathrm{Jac}(\tilde{\mathcal{V}})[\ell]$, each point in the support of the divisor representing it, in the framework of Khuri-Makdisi's algorithms \cite{khuri, khuri1} (as used by Mascot), has (logarithmic) Weil height bounded by a polynomial in $\ell$. The strategy is to make use of \cite[Theorem 9.1.3]{ceramanujan} applied to the curve $\tilde{\mathfrak{j}}: \tilde{\mathcal{V}}\rightarrow \mathbb{P}^{1}$. We first note that Theorems 9.1.3, 9.2.1, and 9.2.5 of \cite{ceramanujan} are directly applicable to our setting, as they are concerned with a general algebraic curve or Riemann surface defined over a number field. We address each term in the inequality of \cite[Theorem 9.1.3]{ceramanujan} separately, showing polynomial bounds. \\ \\
    \hfill
    1. \textbf{Faltings height of the curve} $\tilde{\mathcal{V}}$. \\ As a first step, we invoke Theorem~\ref{javanp}, applied to the curve $\tilde{\mathcal{V}}$, which is the normalisation of $\mathbb{P}^{1}$ in the function field of the cover $\mathfrak{j}: \mathcal{V}\rightarrow \mathcal{U}$. Noting that the ramification locus $\mathcal{Z}=\mathbb{P}^{1}\setminus \mathcal{U}$ has cardinality and height depending only on the surface $\mathcal{X}$ and independent of $\ell$, we see that the theorem directly gives that the Faltings height $\mathfrak{h}_F(\tilde{\mathcal{V}})$ of the Jacobian of $\tilde{\mathcal{V}}$ is bounded above by $$
    \deg(\mathfrak{j})^{a},
    $$  
    where the quantity $a$ is independent of $\ell$. Noting that $\deg(\mathfrak{j})$ is bounded by a polynomial in $\ell$ gives the result.
    \\ \\
\hfill
    2. \textbf{Sup norm bounds for the Arakelov-Green's functions} The sup-norm of the Arakelov-Green's functions $g$ is bounded above as a linear function of Faltings' delta invariant $\delta_F(\cdot)$ and the genus $\mathfrak{g}$, by \cite[Corollary 4.6.2]{wilms}. The quantity $\delta_F(\tilde{\mathcal{V}})$ is in turn bounded in Javanpeykar's result \cite[Theorem 6.0.4]{javan}, by a polynomial in $\ell$. \\ \\
\hfill
    3. \textbf{Bounds for the theta function} For the norm of the theta function $\vert \vert \vartheta \vert \vert$ on $\mathrm{Pic}^{\mathfrak{g}-1}(\tilde{\mathcal{V}})$, we have by \cite[Lemma 2.4.2]{javan}
    $$
    \log \vert \vert \vartheta \vert \vert_{\text{max}}\leq \frac{\mathfrak{g}}{4} \log \max(1, \mathfrak{h}_{F}(\tilde{\mathcal{V}}))+ (4\mathfrak{g}^{3}+5\mathfrak{g}+1)\log 2,
    $$
    which is clearly bounded by a polynomial in $\ell$, as both the genus $\mathfrak{g}$ of $\tilde{\mathcal{V}}$ and its Faltings height $\mathfrak{h}_{F}(\tilde{\mathcal{V}})$ are.
    \\ \\
\hfill
    4. \textbf{An integral bound} Consider the integral $$
    \int_{\tilde{\mathcal{V}}}\log(1+ \vert \tilde{\mathfrak{j}} \vert^{2} )\mu_{\tilde{\mathcal{V}}},$$
    where $\mu_{\tilde{\mathcal{V}}}$ is the Arakelov 1-1 form associated to $\tilde{\mathcal{V}}$, regarded as a Riemann surface. By pushing forward to $\mathbb{P}^{1}$, one may conclude a polynomial upper bound for the integral as the degree, the number of poles and (logarithmic) height of the polynomials defining the function $\tilde{\mathfrak{j}}$ are bounded by a polynomial in $\ell$. Further, the ramification locus $\mathcal{Z}\subset \mathbb{P}^{1}$ is independent of $\ell$ as well.
    \\ \\
\hfill
  5. \textbf{Bounds for intersection numbers} For an $\ell$ - torsion divisor $\mathbf{D}_x$ corresponding to $x\in \mathbb{E}$, one can bound the intersection numbers due to work of de Jong \cite[Proposition 2.6.1]{dejong} (see \S 2.6 of loc. cit., more generally, and also \cite[Theorem 9.2.5]{ceramanujan}), combined with the bound for the Arakelov-Green's functions. An explicit version of the estimate is due to Wilms \cite[Propositions 1, 2]{wilms} \footnote{it is not necessary to use Weierstra\ss~points for the algorithm as in \cite{wilms}, however they can anyway be computed efficiently by \cite{hessweiss}} giving polynomial bounds for the intersection numbers using Weierstra\ss~ points.

    With bounds for the above quantities, it follows that for each point $P_x$ in the support of $\mathbf{D}_x$, the absolute Weil height $h(\tilde{\mathfrak{j}}(P_x))$ is bounded by a polynomial in $\ell$, by a similar argument as in \cite[Proposition 11.7.1]{ceramanujan}. This implies the same for $h(P_x)$ as the map $\tilde{\mathfrak{j}}$ itself has height and degree bounded by a polynomial in $\ell$.
\end{proof}
\begin{remark}
    We note that in the proofs of each of the above components, we require $\tilde{\mathcal{V}}$ to be semistable over $K$. This is possible after an extension, but the degree of the extension can be exponential in the genus $\mathfrak{g}$ and hence $\ell$. This does not affect the bounds as the inequalities (in particular, for the intersection number as well) are normalised by the degree $[K: \mathbb{Q}]$, as in \cite[Theorem 9.1.1]{ceramanujan}, ultimately giving polynomial height bounds.
\end{remark}

\begin{remark}
    As an aside, we mention that the result of Javanpeykar, Theorem~\ref{javanp}, provides a heuristic towards Theorem~\ref{thm:edixdivhtbd} in the following sense. An $\ell$-torsion point in $\mathrm{Jac}(\tilde{\mathcal{V}})[\ell]$ is understood as a divisor $\mathbf{D}$, giving a curve $\mathfrak{j}': \mathcal{W}\rightarrow \tilde{\mathcal{V}}$ corresponding to an \'etale $\mu_{\ell}$-torsor. The composite map $$\tilde{\mathfrak{j}}\circ \mathfrak{j}':\mathcal{W}\rightarrow \mathbb{P}^{1}$$ is ramified exactly at $\mathcal{Z}$, and is of degree bounded by a polynomial in $\ell$. Further, the curve $\mathcal{W}$ also has genus bounded by a polynomial in $\ell$ thanks to the Riemann-Hurwitz formula, hence has Faltings height bounded by a polynomial in $\ell$ by Theorem~\ref{javanp}. This suggests that the (logarithmic) Weil height of the algebraic numbers that appear in a ``minimal'' expression for the divisor $\mathbf{D}$ should also likewise be bounded by a polynomial in $\ell$.
\end{remark}
We conclude with the below table, drawing a rough comparison with the leitmotif of the work \cite{ceramanujan}.

\begin{table}[h!]
  \centering
 \renewcommand{\arraystretch}{1.8}
  \begin{tabular}{|p{8cm}|p{8cm}|p{8cm}|}
    \hline
    \textbf{Couveignes-Edixhoven} & \textbf{This work} \\
    \hline
    Modular curve $\mathrm{X}_{1}(5\ell)$ & The curve $\tilde{\mathcal{V}}$ \\
    \hline
    The Ramanujan subspace $\mathrm{V}\subset \mathrm{J}_{1}(5\ell)[\ell] $ & The Edixhoven subspace $\mathbb{E}\subset \mathrm{Jac}(\tilde{\mathcal{V}})[\ell]$ \\
    \hline
   Hecke action to compute $\mathrm{V}$ & Monodromy action to compute $\mathbb{E}$. \\
    \hline
  \end{tabular}
  \caption{Comparison to Couveignes-Edixhoven}
  \label{tab:couvedix}
\end{table}

\section{Main theorem}
\label{subsec:main}
In this section, we state and prove our main result.

\begin{theorem}
\label{thm:main}
    Let $\mathcal{X}$ be a fixed, nice surface of degree $D$ defined over a number field $K$. Then, there exists a randomised algorithm that  
    \begin{itemize}

\item[(i)] on input a prime number $\ell$, outputs the \'etale cohomology groups $\mathrm{H}^{i}(\mathcal{X}, \mu_\ell)$ for $0\leq i\leq 4$ along with the $\mathrm{Gal}(\overline{K}/K)$ action in time
$$\mathrm{poly}(\ell),$$
    
   \item [(ii)] on input a prime $\mathfrak{p}\subset \mathcal{O}_K$ of good reduction with $\mathcal{O}_K/\mathfrak{p}=\mathbb{F}_{q}$, outputs the zeta function of the reduction $Z(X/\mathbb{F}_{q}, T)$, and the point-count $\#X(\mathbb{F}_{q})$ in time
    $$
    \mathrm{poly}(\log q).
    $$

\end{itemize}
\end{theorem}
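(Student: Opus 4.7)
The plan is to assemble the machinery developed in Sections~\ref{sec:prelim}--\ref{sec:algos} into a single pipeline, showing that each step runs in time polynomial in $\ell$ and that the output suffices to recover all data requested in (i) and (ii). First I would reduce to the setting of a Lefschetz pencil by applying Algorithm~\ref{algo:blowup} to blow up $\mathcal{X}$ at the axis of a generic pencil of hyperplane sections, producing $\pi : \mathcal{X} \to \mathbb{P}^1$; the effect on cohomology is explicit and only modifies $\mathrm{H}^2$ by a copy of $\mathrm{H}^0(\Upsilon \cap X, \mathbb{Q}_\ell)(-1)$ which is recoverable. The degrees $i=0,1,3,4$ of $\mathrm{H}^i(\mathcal{X}, \mu_\ell)$ are handled directly from the Leray decomposition: $\mathrm{H}^0$ and $\mathrm{H}^4$ are $\mu_\ell$ and $\mu_\ell^\vee$ with cyclotomic Galois action, while $\mathrm{H}^1 \cong \mathrm{H}^0(\mathbb{P}^1, \mathcal{F}) = \ker(\alpha)$ and $\mathrm{H}^3 \cong \mathrm{H}^2(\mathbb{P}^1, \mathcal{F}) = \mathrm{coker}(\beta)$, and both $\alpha, \beta$ become explicit once the monodromy and vanishing cycles are in hand.

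The bulk of the work goes into $\mathrm{H}^2$, in particular the summand $\mathrm{H}^1(\mathbb{P}^1, \mathcal{F})$; the other two summands $\langle \gamma_E \rangle, \langle \gamma_F \rangle$ carry the cyclotomic character. I would proceed in the following order: (a) run Algorithm~\ref{algo:genelldiv} to obtain the $\ell$-division ideal $\prescript{(\ell)}{}{\mathcal{I}_{\overline{\eta}}}$ of $\mathrm{Jac}(\mathcal{X}_{\overline{\eta}})$; (b) for each critical point $z \in \mathcal{Z}$, apply Algorithm~\ref{algo:cospec} to obtain Puiseux representatives for $\mathcal{F}_{\overline{\eta}}$ around $z$ and Algorithm~\ref{algo:van} to pin down the vanishing cycle $\delta_z$ (up to sign), using a nearby smooth fibre $u_z$ inside the radius of convergence $\varepsilon_z$ (Lemma~\ref{lem:radconv}); (c) run Algorithm~\ref{algo:mono} to express the monodromy generators $\sigma_j$ as rational functions in the primitive element $\bm{\tau}$; (d) construct the trivialising cover $\tilde{\mathcal{V}} \to \mathbb{P}^1$ via normalisation and the $G$-action on $\tilde{\mathcal{V}}$ as in Section~\ref{subsec:geomgal}.

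Next I would apply Theorem~\ref{thm:galiso} to reduce the computation of $\mathrm{H}^1(\mathbb{P}^1, \mathcal{F})$ as a Galois module to isolating the Edixhoven subspace $\mathbb{E} \subset \mathrm{Jac}(\tilde{\mathcal{V}})[\ell]$ and taking $G$-invariants of $\mathbb{E} \otimes M$. To do so, I pick an auxiliary prime $\mathfrak{P}$ of size $O(\ell)$ of good reduction, distinct from $\ell$; run Algorithm~\ref{algo:edix} to compute $\mathbb{E}_\mathfrak{P} \subset \mathrm{Jac}(\tilde{\mathcal{V}}_\mathfrak{P})[\ell]$ in time $\mathrm{poly}(\ell)$, using a $\mathfrak{P}$-adic point-counting algorithm for $\tilde{\mathcal{V}}_\mathfrak{P}$ together with Couveignes' algorithm to find the torsion rationally over a poly-bounded extension (Lemma~\ref{edixlem}); then invoke Mascot's Hensel-lifting (Theorem~\ref{thm:mascot}) with accuracy parameter $e$ dictated by the height bound of Theorem~\ref{thm:edixdivhtbd}, giving $\mathbb{E}$ together with its $\mathrm{Gal}(\overline{K}/K)$-action. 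Taking $(\mathbb{E} \otimes M)^G$ then yields $\mathrm{H}^1(\mathbb{P}^1, \mathcal{F})$ with Galois action; this completes (i).

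For (ii), once the Galois action on $\mathrm{H}^i(\mathcal{X}, \mu_\ell)$ is known for a single $\ell = O(\log q)$, I would reduce modulo $\mathfrak{p}$: the Frobenius $F_q^\star$ at $\mathfrak{p}$ is the image of a Frobenius element in $\mathrm{Gal}(\overline{K}/K)$, so its characteristic polynomial on each $\mathrm{H}^i(X, \mu_\ell)$ is read off directly, giving the reversed characteristic polynomials $P_i(X/\mathbb{F}_q, T) \bmod \ell$. Running step (i) for $O(\log q / \log \log q)$-many primes $\ell$ of size $O(\log q)$ and applying the Weil conjecture bounds $\vert \alpha_j \vert = q^{i/2}$ on the Frobenius eigenvalues, one recovers each $P_i$ integrally by Chinese remaindering as described in Section~\ref{app:rec}, whence $Z(X/\mathbb{F}_q, T)$ and the point count $\#X(\mathbb{F}_q)$ follow. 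The total runtime is $\mathrm{poly}(\log q)$ because each invocation of (i) is $\mathrm{poly}(\ell) = \mathrm{poly}(\log q)$ and only $\mathrm{poly}(\log q)$ invocations are needed. The main obstacle is the polynomial runtime of the lift: this rests on the height bound of Theorem~\ref{thm:edixdivhtbd}, for which controlling each of the five terms (Faltings height of $\tilde{\mathcal{V}}$, Arakelov--Green's function, theta norm, the integral on $\tilde{\mathcal{V}}$, and the intersection numbers) by a polynomial in $\ell$ is the nontrivial arithmetic input.
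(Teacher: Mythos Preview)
Your proposal is correct and recapitulates the same pipeline as the paper, which packages the computation as Algorithm~\ref{algo:main} with complexity proved in Lemma~\ref{lem:maincomp}. The only minor deviations are that you compute $\mathrm{H}^3$ via $\mathrm{coker}(\beta)$ whereas the paper invokes Poincar\'e duality from $\mathrm{H}^1$, and for part~(ii) the paper explicitly cites Dokchitser's algorithm to identify the Frobenius conjugacy class at $\mathfrak{p}$ inside the finite Galois quotient---a polynomial-time step you gloss over with ``read off directly''.
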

\begin{proof}
    % Using Theorem~\ref{thm:poschar}, we see that Algorithm~\ref{algo:H1} outputs the first cohomology and Algorithm~\ref{algo:H2} outputs the second and third cohomology groups (all with Galois actions). The Galois action on the zeroth and fourth cohomology groups is computed trivially. The runtime of the algorithms as stated is proved in Section~\ref{sec:comp}. The zeta-function and point count are recovered from the cohomology groups by Section~\ref{app:rec}.

  The computation of the cohomology groups $\mathrm{H}^{i}(\mathcal{X}, \mu_\ell)$ for $i=1,2$ is in Algorithm~\ref{algo:main}. The computation for $i=3$ follows from that of $i=1$ using Poincar\'e duality, while the cases $i=0, 4$ are via suitable twists of the cyclotomic character. The complexity is proved in Lemma~\ref{lem:maincomp}. We remark further, that the output is not dependent on the choice of (co)specialisations in Steps~\ref{algo:main1} and~\ref{algo:main2} of Algorithm~\ref{algo:main}, as ultimately we are interested in monodromy invariants, and any other choices only differ by conjugacy, i.e., the invariant subspaces are always isomorphic as $\mathrm{Gal}(\overline{K}/K)$ modules.
   
   Part (ii) follows in a manner similar to that mentioned in \cite[Remark 1.2]{mascothensel}. One uses an efficient algorithm to compute the image of the Frobenius element at large primes, upto conjugacy, such as \cite{dok}, combined with Section~\ref{app:rec} to recover the zeta function and point count. 
   
\end{proof}
\begin{algorithm}
    \caption{\texttt{Computing the cohomology groups $\mathrm{H}^{i}(\mathcal{X}, \mu_\ell)$}}
    \label{algo:main}
    \begin{itemize}
    \item \textbf{Input:} A smooth projective surface $\mathcal{X}\subset \mathbb{P}^{N}$ of degree $D$ over a number field $K$ presented as a system of homogeneous polynomials of degree $\leq d$ and a prime number $\ell$.

    \item \textbf{Pre-processing:} Fibre $\mathcal{X}$ as a Lefschetz pencil $\pi:\mathcal{X}\rightarrow \mathbb{P}^{1}$. Let $\mathcal{Z}\subset \mathbb{P}^{1}$ parametrise the singular fibres and $\mathcal{U}=\mathbb{P}^{1}\setminus Z$ the smooth ones. Embed the Jacobian of the generic fibre $\mathcal{X}_{\overline{\eta}}$ into $\mathbb{P}^{M}$ obtaining the $\ell$ -- torsion $\mathrm{Pic}^{0}(\mathcal{X}_{\overline{\eta}})[\ell]$ as the $\overline{K(t)}$ -- roots of the ideal $\prescript{(\ell)}{}{}\mathcal{I}_{\overline{\eta}}$ using Algorithm~\ref{algo:genelldiv}.
 % Pick a $\mathsf{u}\in U(\mathbb{F}_{Q})$ \footnote{may need to take an extension $\mathbb{F}_{Q}/\mathbb{F}_{q}$ to ensure a smooth fibre actually exists} which is a smooth fibre of the reduced pencil $\pi:X\rightarrow \mathbb{P}^{1}_{\overline{\mathbb{F}}_{q}}$.
  
    \item \textbf{Output:} The cohomology groups $\mathrm{H}^{i}(\mathcal{X}, \mu_\ell)$ for $1\leq i\leq  2$ presented as $\mathbb{F}_{\ell} $ -- vector spaces with bases and $\mathrm{Gal}(\overline{K}/K) $ -- action.
    
\end{itemize}

\begin{algorithmic}[1]

\STATE \label{algo:main1} Choose a point $u\in \mathcal{U}({K})$ of bounded height and degree, to serve as base point.

\STATE \label{algo:main2} Compute a cospecialisation $\phi_u:\mathcal{F}_u\rightarrow \mathcal{F}_{\overline{\eta}}$, by making a choice of expansion for the primitive element $\bm{\tau}$ around $u$, hence obtaining each $\gamma\in \mathcal{F}_{\overline{\eta}}$ as Laurent series around $u$.

\STATE \label{algo:main3} Compute the image of the monodromy fixed subspace, i.e., those elements $\gamma\in \mathcal{F}_{\overline{\eta}}$ fixed by each $\sigma_j$ for $1\leq j<r$, with the monodromy action as computed in Algorithm~\ref{algo:mono}.

\STATE \label{algo:main4} Compute the Galois action on the monodromy fixed subspace $\mathcal{F}_u^{G}:=\phi_u^{-1}(\mathcal{F}_{\overline{\eta}}^{G})$ element-wise, using the cospecialisation $\phi_u$. This gives $\mathrm{H}^{1}(\mathcal{X}, \mu_\ell)$ with $\mathrm{Gal}(\overline{K}/K)$ action.

\STATE \label{algo:main5}  Compute the subspace $\mathcal{E}_{u}$ as the complement $(\mathcal{F}_{u}^{G})^{\perp}$ under the symplectic Weil pairing on $\mathcal{F}_u$. 

\STATE \label{algo:main6} Choose an auxiliary small prime of good reduction $\mathfrak{P}$, with characteristic at most of size $O(\ell)$, distinct from $\ell$. Now, for the second cohomology, we work with the curve $\tilde{\mathcal{V}}$. Reduce modulo $\mathfrak{P}$ and compute the subspace $\mathbb{E}_{\mathfrak{P}}\subset \mathrm{H}^{1}(\tilde{\mathcal{V}}_{\mathfrak{P}}, \mu_\ell)$ using Algorithm~\ref{algo:edix}.
\STATE \label{algo:main7} Lift the subspace $\mathbb{E}_{\mathfrak{P}}$ to the characteristic zero subspace $\mathbb{E}\subset \mathrm{H}^{1}(\tilde{\mathcal{V}}, \mu_\ell)$ using Theorem~\ref{thm:mascot}.

\STATE \label{algo:main8} Compute the space of invariant tensors $$(\mathbb{E}\otimes \mathcal{E}_u)^{G}$$ with knowledge of the $G$ - action.

\STATE \label{algo:main9} Compute the diagonal $\mathrm{Gal}(\overline{K}/K)$ action as a matrix on the subspace of tensors which has been isolated in the above step, element-wise. This gives the space $\mathrm{H}^{1}(\mathbb{P}^{1}, \mathcal{F})$ with $\mathrm{Gal}(\overline{K}/K)$ - action. To obtain the full $\mathrm{H}^{2}(\mathcal{X}, \mu_\ell)$, we just add the space $<\gamma_E>\oplus <\gamma_F>$, on which Galois acts via the cyclotomic character on each component.

\end{algorithmic}
\end{algorithm}

\section{Complexity analyses}
\label{sec:comp}
In this section, we prove the upper bounds for the complexities stated of the subroutines used in the earlier sections. We do not deduce the exact complexities beyond showing that they are bounded by polynomial functions of $\ell$ and $\log q$. 
We also keep track of the heights of the algebraic numbers involved in the computations. 

\subsection{Algorithms of Sections~\ref{sec:prelim} and~\ref{sec:sub}}
\label{subsec:prelimcomp}
Noting that the complexity of Algorithm~\ref{algo:blowup} is independent of $\ell$, we begin with the following.

\begin{lemma}
    Algorithm~\ref{algo:genelldiv} runs in time $\mathrm{poly}(\ell)$.
\end{lemma}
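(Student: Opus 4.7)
The plan is to show that each step of Algorithm~\ref{algo:genelldiv} has cost polynomial in $\ell$, exploiting the bounds already collected in Section~\ref{subsec:gen}.

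First, Step 1 calls Theorem~\ref{thm:eqnjac} (Anderson's algorithm, detailed in the appendix) to produce an embedding $\mathrm{Jac}(\mathcal{X}_{\overline{\eta}}) \hookrightarrow \mathbb{P}^M$ together with its addition law, defined by polynomials over $K(t)$. The embedding dimension $M$, the degrees of the defining equations, and their coefficient heights depend only on the genus $g \leq (D-1)^2$ and on the fixed data of the Lefschetz fibration. Thus Step 1 takes time independent of $\ell$.

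Second, Step 2 constructs the multiplication-by-$\ell$ morphism by iterating the addition law of Theorem~\ref{thm:eqnjac} via a doubling-and-adding scheme in $O(\log \ell)$ compositions. Each composition enlarges the degrees (in $t$ and in the projective coordinates) and the coefficient heights of the defining polynomials by at most a constant factor, so the resulting morphism is presented by polynomials whose degree and height are polynomial in $\ell$. The computation accordingly runs in time polynomial in $\ell$.

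Finally, Step 3 realises $\prescript{(\ell)}{}{\mathcal{I}_{\overline{\eta}}}$ as the preimage of the identity under the morphism from Step 2, intersected with the Jacobian equations. This is a zero-dimensional system over $K(t)$ whose $\overline{K(t)}$-solution set has exactly $\ell^{2g}$ points; by the discussion preceding Algorithm~\ref{algo:genelldiv}, the degree in $t$ is bounded by $D^{N+1}\ell^{4(D+1)^4}$, and coefficient heights are polynomial in $\ell$ by Section~\ref{app:height}. A standard zero-dimensional solver, for instance the rational univariate representation of \cite{rouillier} applied with these input bounds, isolates and outputs the ideal in time polynomial in the total input size, hence in $\ell$. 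The main bookkeeping difficulty is controlling the degree and height blow-up in Step 2 under repeated doubling; this is precisely what the $O(\log \ell)$-many compositions guarantee, so no step exceeds the polynomial-in-$\ell$ budget.
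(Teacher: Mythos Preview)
Your argument is essentially the one the paper gives: bound the data of the multiplication-by-$\ell$ map polynomially in $\ell$ (the paper simply cites \cite[\S 2]{pila} for this, whereas you sketch a double-and-add justification), and then appeal to the height material in Section~\ref{app:height} (the paper is more specific here, invoking Theorem~\ref{thm:tors_height} together with Javanpeykar's bound on the Faltings height of the curve cut out by $\prescript{(\ell)}{}{}\mathcal{I}_{\overline{\eta}}$). Two small points: your claim that each composition enlarges the coefficient \emph{heights} by at most a constant factor is not quite right---the log-height after a composition grows like $h_f + d_f\cdot h_g$, not multiplicatively by a constant---though the conclusion (polynomial in $\ell$ after $O(\log \ell)$ doublings) is still correct; and Step~3 of Algorithm~\ref{algo:genelldiv} only \emph{returns} the ideal rather than solving it, so the call to \cite{rouillier} there is superfluous (it appears later, in Algorithm~\ref{algo:cospec}).
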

\begin{proof}
Pila \cite[\S 2]{pila} shows that the data representing the multiplication by $\ell$ map is bounded by a polynomial in $\ell$. Further, the coefficients occurring in the ideal $\prescript{(\ell)}{}{}\mathcal{I}_{\overline{\eta}}$ have height bounded by a polynomial in $\ell$ due to Theorem~\ref{thm:tors_height} and the fact that the Faltings height of the (normalisation of the) curve  $\prescript{(\ell)}{}{}\mathfrak{C}$ over $K$ given by $\prescript{(\ell)}{}{}\mathcal{I}_{\overline{\eta}}$ is bounded by a polynomial in $\ell$ \cite[Theorem 6.0.6]{javan}.

\end{proof}

% \subsection{Algorithms of Section~\ref{sec:sub}}
% \label{subsec:subcomp}

\begin{lemma}
    Algorithm~\ref{algo:cospec} runs in time $\mathrm{poly}(\ell)$.
\end{lemma}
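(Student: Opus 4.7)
The algorithm has four steps, and the plan is to bound the complexity of each in turn, with the dominant cost coming from the zero-dimensional system solving and the Newton-Puiseux expansion.

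First, Step~\ref{algo:cospec1} invokes Algorithm~\ref{algo:genelldiv}, whose runtime is $\mathrm{poly}(\ell)$ by the previous lemma. The resulting ideal $\prescript{(\ell)}{}{}\mathcal{I}_{\overline{\eta}}$ has degree $\ell^{2g}$ over $\overline{K}(t)$, which is polynomial in $\ell$ since the genus $g\leq D^2-2D+1$ of the generic fibre depends only on $D$ and not on $\ell$. The coefficients are polynomials in $t$ of degree and height bounded by a polynomial in $\ell$, as established in Section~\ref{subsec:gen} and via Theorem~\ref{thm:tors_height} combined with Javanpeykar's estimates on the Faltings height.

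For Step~\ref{algo:cospec2}, I would invoke Rouillier's rational univariate representation algorithm \cite{rouillier} to compute a primitive element $\bm{\tau}$ for the splitting field $\mathbf{K}$ of $\prescript{(\ell)}{}{}\mathcal{I}_{\overline{\eta}}$ over $K(t)$. By the inequality~\eqref{eqn:gl}, the degree $[\mathbf{K}:K(t)]$ is bounded by $\#\mathrm{GL}(2g,\mathbb{F}_\ell)\leq \ell^{4g^2}$, which is polynomial in $\ell$ as $g$ is fixed. Rouillier's algorithm runs in time polynomial in the input size (the number of variables, the degree, and the bit size of the coefficients), and each of these is already polynomial in $\ell$, so the entire step is $\mathrm{poly}(\ell)$. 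Each element $\gamma \in \mathcal{F}_{\overline{\eta}}$ is returned as a tuple of rational functions in $\bm{\tau}$ with coefficients in a $\mathrm{poly}(\ell)$-degree extension of $K$, where the coefficient heights are $\mathrm{poly}(\ell)$ as already noted.

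For Step~\ref{algo:cospec3}, I would apply the Newton-Puiseux algorithm of Theorem~\ref{thm:ntpu} (\cite{walsh}) to expand $\bm{\tau}$ as a Puiseux series $\lambda_z$ around $z$. The input is the minimal polynomial $\mu(x)$ of $\bm{\tau}$ over $K(t)$, whose degree in $x$ is bounded by a polynomial in $\ell$. By Lemma~\ref{lem:puibound}, it suffices to compute $\mathrm{poly}(\ell)$ many coefficients of the expansion in order to identify each $\gamma$ uniquely, and by Theorem~\ref{thm:ntpu} this costs $\mathrm{poly}(d_x \cdot m) = \mathrm{poly}(\ell)$. The coefficients themselves have Weil height bounded by a polynomial in $\ell$ by the same lemma (invoking \cite[Theorem 1]{walsh}). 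Substituting this Puiseux expansion of $\bm{\tau}$ into the rational function representations from Step~\ref{algo:cospec2} yields Puiseux expansions for each $X_i^{(\gamma)}(t)$ via standard power series arithmetic, again in $\mathrm{poly}(\ell)$ time and precision.

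Step~\ref{algo:cospec4} is just the output of the computed tuples; there are $\ell^{2g}\cdot (M{+}1) = \mathrm{poly}(\ell)$ many Puiseux series, each of truncation length $\mathrm{poly}(\ell)$, with coefficients of height $\mathrm{poly}(\ell)$, so the total output size is polynomial. The main technical obstacle in this bookkeeping is ensuring that the choice of primitive element and the substitution into the Puiseux expansion do not blow up the heights beyond $\mathrm{poly}(\ell)$, which is handled by appealing to the height-theoretic control from Appendix~\ref{app:height} and the explicit estimates of \cite{walsh} on the coefficients of algebraic Puiseux series. Collecting the four bounds gives the claimed $\mathrm{poly}(\ell)$ runtime.
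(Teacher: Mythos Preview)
Your proof is correct and follows essentially the same step-by-step approach as the paper's own proof: both walk through the four steps of Algorithm~\ref{algo:cospec}, invoking the previous lemma for Step~\ref{algo:cospec1}, Rouillier's algorithm \cite{rouillier} for Step~\ref{algo:cospec2}, Theorem~\ref{thm:ntpu} together with Lemma~\ref{lem:puibound} for Step~\ref{algo:cospec3}, and elementary power-series arithmetic for Step~\ref{algo:cospec4}. Your version is considerably more detailed than the paper's (explicitly tracking degree and height bounds and spelling out why each input parameter is $\mathrm{poly}(\ell)$), but the underlying argument is the same.
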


\begin{proof}
\hfill
\begin{itemize}
    \item Step~\ref{algo:cospec1}: The complexity of Algorithm~\ref{algo:genelldiv} has been shown to be polynomial in $\ell$.

    \item Step~\ref{algo:cospec2}: Zero-dimensional system solving can be done using a primitive element in time polynomial in the degree of the system by \cite{rouillier}.

    \item Step~\ref{algo:cospec3}: Computing the first $m$ coefficients of a branch can be done in $\mathrm{poly}(m)$ time by Theorem~\ref{thm:ntpu}. It suffices to compute the first $\mathrm{poly}(\ell)$ coefficients to uniquely specify a branch by Lemma~\ref{lem:puibound}.

    \item Step~\ref{algo:cospec4}: Once a choice of Puiseux series for $\bm{\tau}$ is made, simple arithmetic (addition, squaring) can be performed using it in polynomial time.

\end{itemize}

\end{proof}

\begin{lemma}
    Algorithm~\ref{algo:recenter} runs in time $\mathrm{poly}(\ell)$.
\end{lemma}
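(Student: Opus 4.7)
The plan is to verify each of the three steps of Algorithm~\ref{algo:recenter} individually runs in time $\mathrm{poly}(\ell)$, and then combine the bounds. The input data (Puiseux expansions truncated to $\mathrm{poly}(\ell)$ terms with coefficients of height $\mathrm{poly}(\ell)$) has already been controlled by Lemma~\ref{lem:puibound}, so the task reduces to certifying the three computational steps.

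First I would handle Step~\ref{algo:recenter1}: specialising the ideal $\prescript{(\ell)}{}{}\mathcal{I}_{\overline{\eta}}$ at $u_{z}$. The ideal itself has degree in $t$ and coefficient height bounded by polynomials in $\ell$ (established in the analysis of Algorithm~\ref{algo:genelldiv}), and $u_{z}$ is of height $\mathrm{poly}(\ell)$ by assumption, so the substitution $t\mapsto u_{z}$ is a polynomial-time operation, producing $\prescript{(\ell)}{}{}\mathcal{I}_{u_{z}}$ as a zero-dimensional ideal over a finite extension of $K$ whose complexity parameters remain polynomial in $\ell$. The appeal to Igusa (Section~\ref{app:igusa}) ensures the smoothness of the specialisation at $u_{z}$, so no roots are lost.

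Next, for Step~\ref{algo:recenter2}, we apply Rouillier's zero-dimensional system solver \cite{rouillier} to $\prescript{(\ell)}{}{}\mathcal{I}_{u_{z}}$. Its runtime is polynomial in the degree of the system, which is $\ell^{2g}=\mathrm{poly}(\ell)$ since $g$ is fixed, and in the height of the coefficients, which is bounded by $\mathrm{poly}(\ell)$ via Theorem~\ref{thm:tors_height}. This produces the full list of the $\ell^{2g}$ torsion points $\mathrm{Pic}^{0}(\mathcal{X}_{u_{z}})[\ell]$, each expressed via a primitive element of a $\mathrm{poly}(\ell)$-degree extension.

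Finally, Step~\ref{algo:recenter3} is exactly the content of Theorem~\ref{thm:approx}: truncating the input tuple $[X^{(\gamma)}_{0}(t):\ldots:X^{(\gamma)}_{M}(t)]$ to $\mathrm{poly}(\ell)$ many Puiseux coefficients, evaluating them at $u_{z}$ to $\mathrm{poly}(\ell)$-precision, and matching against the list from Step~\ref{algo:recenter2} identifies the correct specialisation unambiguously in $\mathrm{poly}(\ell)$ time, space, and precision. The only step I expect to require care is Step~\ref{algo:recenter3}, since one must verify that the working precision $\mathrm{poly}(\ell)$ guaranteed by Theorem~\ref{thm:approx} is indeed enough to distinguish the $\ell^{2g}$ candidate points --- but this separation is precisely what Lemma~\ref{lem:algbound} provides, via the lower bound $\Gamma(\mathbf{d},\mathbf{h})$ on root separations in terms of degree and height, both of which remain polynomial in $\ell$ in our regime. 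Summing the three bounds gives the desired overall $\mathrm{poly}(\ell)$ runtime.
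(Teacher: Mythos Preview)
Your proposal is correct and follows essentially the same approach as the paper: a step-by-step verification that each of the three steps runs in $\mathrm{poly}(\ell)$ time, invoking the substitution $t\mapsto u_z$ for Step~\ref{algo:recenter1}, Rouillier's solver together with Theorem~\ref{thm:tors_height} for Step~\ref{algo:recenter2}, and Theorem~\ref{thm:approx} for Step~\ref{algo:recenter3}. Your version is slightly more detailed (you explicitly unpack the role of Lemma~\ref{lem:algbound} in the separation argument), but the structure and key inputs are the same.
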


\begin{proof}
\hfill

\begin{itemize}
    \item Step 1: Specialisation of the ideal $\prescript{(\ell)}{}{}\mathcal{I}_{\overline{\eta}}$  to $u$ mearly involves making the substitution $t=u$.

    \item Step 2: The specialised ideal $\prescript{(\ell)}{}{}\mathcal{I}_{u}$ is now zero-dimensional over $\overline{K}$ and its roots can be found by a system solver \cite{rouillier}. The Weil height of the $\ell$ -- torsion points is bounded by a polynomial in $\ell$ by Theorem~\ref{thm:tors_height}.

    \item Step 3: Convergence to an algebraic number with $\mathrm{poly}(\ell)$ precision is guaranteed by Theorem~\ref{thm:approx}.
\end{itemize}

\end{proof}

\begin{lemma}
    
    Algorithm~\ref{algo:van} runs in time $\mathrm{poly}(\ell)$.
\end{lemma}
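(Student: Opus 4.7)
The strategy is to treat each of the ten steps of Algorithm~\ref{algo:van} separately and argue that each requires only $\mathrm{poly}(\ell)$ time, height, and precision, using the subroutines whose complexities have already been established in this section. The overall bound then follows from the fact that there are a constant number of steps.

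For Steps~\ref{algo:van1} and~\ref{algo:van2}, we invoke Algorithm~\ref{algo:cospec}, whose complexity has just been shown to be polynomial in $\ell$. The search for a $\gamma \in \mathcal{F}_{\overline{\eta}} \setminus \phi_z(\mathcal{F}_z)$ in Step~\ref{algo:van2} only involves testing which of the $\ell^{2g}$ representatives has an associated Puiseux series whose denominator in the fractional exponent is strictly greater than one; this needs to be done for at most $O(\ell^{2g})$ candidates, and since $g$ is fixed (independent of $\ell$) this is $\mathrm{poly}(\ell)$. For Step~\ref{algo:van3}, the action of $\sigma_z$ is termwise multiplication of the Puiseux coefficients by roots of unity $\zeta_\ell^{j}$, which uses $\mathrm{poly}(\ell)$ arithmetic operations by Lemma~\ref{lem:puibound} (as only $\mathrm{poly}(\ell)$ coefficients need be tracked).

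Steps~\ref{algo:van4} and~\ref{algo:van5} invoke Algorithm~\ref{algo:recenter}, whose polynomial complexity has already been established. Step~\ref{algo:van6} is a single subtraction in $\mathrm{Pic}^{0}(\mathcal{X}_{u_z})$, which runs in time polynomial in $\ell$ using the explicit group law of Theorem~\ref{thm:eqnjac} (whose defining data has size $\mathrm{poly}(\ell)$ by Pila, c.f.~Theorem~\ref{thm:pila}). Step~\ref{algo:van7} uses the inverse of the abstract Abel map (Algorithm~\ref{algo:abel} from the appendix) to pass from the embedded projective representation to a divisorial representation on the smooth fibre $\mathcal{X}_{u_z}$; here one invokes effective Riemann--Roch (Theorem~\ref{thm:jacarith}) on a fibre of bounded genus. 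Step~\ref{algo:van8} invokes Algorithm~\ref{algo:pairing} whose complexity is polynomial in $\ell$ by Theorem~\ref{thm:pairing}.

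The remaining Steps~\ref{algo:van9} and~\ref{algo:van10} are similarly cheap: a square root in $\mathbb{F}_\ell$ takes $\mathrm{poly}(\log \ell)$ via Tonelli--Shanks, followed by a scalar multiplication on the Jacobian (again bounded by Theorem~\ref{thm:eqnjac}); identifying the resulting $\ell$-torsion point with the corresponding tuple of Puiseux expansions around $z$ uses linear algebra over the coefficient field, which is of degree and height polynomial in $\ell$ by the discussion following~(\ref{eqn:gl}) and the bounds of Theorem~\ref{thm:tors_height}. The main point requiring attention is the book-keeping of heights and precisions across steps, but all of them remain polynomial in $\ell$ by the uniform bounds established in Lemmas~\ref{lem:puibound},~\ref{lem:radconv},~\ref{lem:algbound}, and~\ref{lem:convbound}, so no step blows up. Collecting, the total runtime is $\mathrm{poly}(\ell)$.
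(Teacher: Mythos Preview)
Your proof is correct and follows essentially the same step-by-step approach as the paper's own proof: both walk through Steps~\ref{algo:van1}--\ref{algo:van10} of Algorithm~\ref{algo:van} and invoke the already-established polynomial complexities of the underlying subroutines (Algorithms~\ref{algo:cospec}, \ref{algo:recenter}, \ref{algo:pairing}, \ref{algo:abel} and Theorem~\ref{thm:eqnjac}). Your version adds a few extra details the paper leaves implicit (the $\ell^{2g}$ search bound in Step~\ref{algo:van2}, Tonelli--Shanks for Step~\ref{algo:van9}, and the explicit appeal to the height/precision lemmas), but the overall structure and justification are the same.
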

\begin{proof} \hfill

\begin{itemize}
   \item Step~\ref{algo:van1}: Follows from the complexity of Algorithm~\ref{algo:cospec}.

   \item Step~\ref{algo:van2}: An element $\gamma\in \mathcal{F}_{\overline{\eta}}\setminus \phi_{z}(\mathcal{F}_{z})$ can be chosen by ensuring that at least one of the tuple of Puiseux expansions associated to $\gamma$ is ramified at $z$, i.e., is in fact belongs to $\overline{K}\langle \langle t-z \rangle \rangle \setminus \overline{K}((t-z))$.

   \item Step~\ref{algo:van3}: As each Puiseux expansion is specified only upto the first $\mathrm{poly}(\ell)$ coefficients by Lemma~\ref{lem:puibound}, one has to simply multiply each (non-constant) coefficient by a power of $\zeta_{\ell}$.

   \item Steps~\ref{algo:van4} \& ~\ref{algo:van5}: The complexity follows from that of Algorithm~\ref{algo:recenter}.

   \item Step~\ref{algo:van6}: The addition of the group law can be performed efficiently by Theorem~\ref{thm:eqnjac}.

   \item Step~\ref{algo:van7}: The complexity of computing the abstract Abel map and its inverse (Algorithm~\ref{algo:abel}) is given by Theorem~\ref{thm:eqnjac}.

   \item Step~\ref{algo:van8}: Pairings can be computed in polynomial time using a divisorial description by Algorithm~\ref{algo:pairing}.

   \item Step~\ref{algo:van9}: Square root over $\mathbb{Z}/\ell \mathbb{Z}$ can be found in randomised polynomial time.

\item Step~\ref{algo:van10}: The rational functions in $\bm{\tau}$ corresponding to Puiseux expansions around $z$, can be found in polynomial time via linear algebra combined with $\mathrm{poly}(\ell)$ truncations.
    \end{itemize}
\end{proof}
\begin{lemma}
    Algorithm~\ref{algo:mono} runs in time $\mathrm{poly}(\ell)$.
\end{lemma}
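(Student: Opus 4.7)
The plan is to walk through each of the three steps of Algorithm~\ref{algo:mono} and invoke the complexity estimates already established above. The outer loop is indexed by $z_j \in \mathcal{Z}\setminus \{z_r\}$, and from Section~\ref{subsec:gen} we know $\#\mathcal{Z}\leq D^{N+1}$, a quantity independent of $\ell$, so it suffices to bound the per-iteration cost.

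For Step~\ref{algo:mono1}, I would first expand the input $\gamma$, given as a tuple of rational functions in $\bm{\tau}$, as a tuple of Puiseux series around $z$. This is done exactly as in Step~\ref{algo:cospec3} of Algorithm~\ref{algo:cospec} by substituting the expansion $\lambda_z$ of $\bm{\tau}$ and truncating to $\mathrm{poly}(\ell)$ coefficients, which is sufficient by Lemma~\ref{lem:puibound} and has the complexity already verified in the lemma bounding Algorithm~\ref{algo:cospec}. Applying the inertia generator $\sigma_z$ then amounts to multiplying the coefficient of $(t-z)^{j/\ell}$ by $\zeta_\ell^j$, which is identical in form to Step~\ref{algo:van3} of Algorithm~\ref{algo:van} and thus runs in $\mathrm{poly}(\ell)$ time.

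Step~\ref{algo:mono2} is the step that needs actual argument. Recall from Section~\ref{subsec:cospec} that $[\mathbf{K}:K(t)]\leq \#\mathrm{GL}(2g,\mathbb{F}_{\ell})$, so $\bm{\tau}$ has a minimal polynomial of degree and height polynomial in $\ell$ over $K(t)$. Hence every element of $\mathbf{K}$ can be written uniquely as a $K(t)$-linear combination of $\{1,\bm{\tau},\ldots,\bm{\tau}^{N_0-1}\}$ with $N_0=\mathrm{poly}(\ell)$. To recover such a combination for each coordinate of $\sigma_z(\gamma)$ I would set up a linear system whose unknowns are the coefficients in $K(t)$ (of bounded numerator/denominator degree, again $\mathrm{poly}(\ell)$, controlled via Section~\ref{app:height} and Theorem~\ref{thm:tors_height}), and whose equations come from matching $\mathrm{poly}(\ell)$-many Puiseux coefficients of the ansatz against those of $\sigma_z(\gamma)$ after substituting $\lambda_z$ for $\bm{\tau}$. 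By Lemma~\ref{lem:puibound}, a $\mathrm{poly}(\ell)$-truncation distinguishes the rational function uniquely, so the system has a unique solution which can be computed in $\mathrm{poly}(\ell)$ time by Gaussian elimination over $K$. Step~\ref{algo:mono3} is a trivial output.

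The main potential obstacle is the precision accounting in Step~\ref{algo:mono2}: one must ensure that the truncation length chosen for the Puiseux expansions is both large enough to identify the rational function in $\bm{\tau}$ uniquely (so that the linear system has the correct unique solution), and still polynomial in $\ell$. This follows from combining Lemma~\ref{lem:puibound} with the algebraic-number separation bound of Lemma~\ref{lem:algbound} applied to the coefficients, which together show that $\mathrm{poly}(\ell)$ terms suffice. With this in hand, the overall complexity is $\#\mathcal{Z}\cdot \mathrm{poly}(\ell)=\mathrm{poly}(\ell)$.
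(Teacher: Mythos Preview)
Your proposal is correct and follows essentially the same route as the paper: the paper's proof also reduces Step~\ref{algo:mono1} to the already-analysed Puiseux-expansion and $\sigma_z$-action steps (it cites the complexity of Algorithm~\ref{algo:van} wholesale rather than unpacking Algorithm~\ref{algo:cospec} and Step~\ref{algo:van3} separately as you do), and handles Steps~\ref{algo:mono2}--\ref{algo:mono3} by the same linear-algebra-on-truncated-Puiseux-series argument with Lemma~\ref{lem:puibound} supplying the $\mathrm{poly}(\ell)$ truncation bound. Your version is in fact more detailed than the paper's---the explicit loop bound over $\mathcal{Z}$ and the precision-accounting discussion are not spelled out there---but the underlying argument is the same.
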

\begin{proof}

%The complexity of Steps other than 2 have been discussed. In Step 2, it remains to show that for $z\in \mathcal{Z}$, there exists $z'$ of bounded height, within $\varepsilon$ of $u$, and $z\equiv z' \mod{\mathfrak{p}}$. This is easy to see as $\mathcal{O}_{K}$ is dense in $\mathbb{C}$.     

 %For Step 2, the proof of Theorem~\ref{thm:allvan} shows that for $z\in \mathcal{Z}$ a lift $\hat{z} \equiv z \mod \mathfrak{p}$ exists in an $\varepsilon$ -- neighbourhood around $u$ and can be chosen to have bounded height. The complexity of Step 4 is polynomial  in $\ell$ by Proposition~\ref{prop:red}.

\hfill

\begin{itemize}
    \item Step~\ref{algo:mono1}: Follows from the complexity of Algorithm~\ref{algo:van}.

        \item Steps ~\ref{algo:mono2} and ~\ref{algo:mono3} : This boils down to the problem of expressing elements in the splitting field of the $\ell$ -- torsion of the Jacobian of the generic fibre, as rational functions in a primitive element for the field extension. This can be solved on the level of Puiseux series as well, with $\mathrm{poly}(\ell)$ truncation, by Lemma~\ref{lem:puibound}.

\end{itemize}

\end{proof}

\subsection{Algorithms of Sections~\ref{sec:algos} and~\ref{subsec:main}}
\label{subsec:algoscomp}

\begin{lemma}
Algorithm~\ref{algo:edix} runs in time $\mathrm{poly}(\ell \cdot \mathrm{char}(\mathbb{F}_{\mathfrak{P}})\cdot \log(\# \mathbb{F}_{\mathfrak{P}}))$.
\end{lemma}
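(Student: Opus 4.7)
The plan is to bound each of the five steps of Algorithm~\ref{algo:edix} separately, and show that each fits in time polynomial in $\ell$, $\mathrm{char}(\mathbb{F}_{\mathfrak{P}})$, and $\log(\#\mathbb{F}_{\mathfrak{P}})$. Throughout, we use the fact, already recorded in Section~\ref{subsec:gen}, that the genus $\mathfrak{g}$ of $\tilde{\mathcal{V}}$ and $\tilde{\mathcal{V}}_{\mathfrak{P}}$ is bounded by a polynomial in $\ell$ via Riemann--Hurwitz applied to $\tilde{\mathfrak{j}}:\tilde{\mathcal{V}}\to \mathbb{P}^1$. Heights of the coefficients of defining equations of $\tilde{\mathcal{V}}$ are also polynomial in $\ell$ by the discussion in Section~\ref{subsec:gen}, so the input size to the subroutines invoked below is polynomial in $\ell$.

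First, in Step~\ref{algo:edix1}, the $\mathfrak{P}$-adic point-counting algorithms of Harvey (specialised to curves in \cite{kyng}) or Lauder--Wan compute $Z(\tilde{\mathcal{V}}_{\mathfrak{P}}/\mathbb{F}_{\mathfrak{P}},T)$ in time polynomial in $\mathfrak{g}$, $\mathrm{char}(\mathbb{F}_{\mathfrak{P}})$, and $\log(\#\mathbb{F}_{\mathfrak{P}})$. In Step~\ref{algo:edix2}, given the zeta function, Couveignes' algorithm \cite[Theorem 1]{couv} produces a basis of $\mathcal{S}_i=\mathrm{Jac}(\tilde{\mathcal{V}}_{\mathfrak{P}})[\ell](\mathbb{F}_{\mathfrak{P}^i})$ as a sum of $\tilde{\mathcal{V}}_{\mathfrak{P}}$-points in time polynomial in $\ell$, $\mathfrak{g}$, and $\log(\#\mathbb{F}_{\mathfrak{P}^i})$. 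By Lemma~\ref{edixlem}, it suffices to loop up to $i = J = \mathrm{poly}(\ell)$, so the total cost of Steps~\ref{algo:edix1}--\ref{algo:edix2} remains polynomial in the stated parameters.

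For Step~\ref{algo:edix3}, the action of each generator $\rho_\ell(\sigma_j)$ on $\tilde{\mathcal{V}}_{\mathfrak{P}}$ is, by Section~\ref{subsec:geomgal}, an explicit polynomial automorphism whose reduction mod~$\mathfrak{P}$ has size polynomial in $\ell$. Evaluating this automorphism on each basis point of $\mathcal{S}_i$ and re-expressing the image in the basis via discrete arithmetic on the Jacobian (again \cite[Theorem 1]{couv}) yields the matrix of $\sigma_j$ in polynomial time; stabilisation of $\mathcal{S}_i$ under $G$ is checked by linear algebra. Step~\ref{algo:edix4} first obtains $M_{\mathfrak{P}}$ with $G$-action by reducing the output of Section~\ref{subsec:computevan} modulo $\mathfrak{P}$ (of size $\mathrm{poly}(\ell)$), then computes $\mathrm{Hom}_{G}(M_{\mathfrak{P}}^{\vee},\mathcal{S}_i)$ by solving the linear system $\phi(g\cdot m)=g\cdot \phi(m)$ over $\mathbb{F}_{\ell}$, which has $\dim M_{\mathfrak{P}}\cdot \dim \mathcal{S}_i=\mathrm{poly}(\ell)$ unknowns; the sum of the images is then standard linear algebra. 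Step~\ref{algo:edix5} computes the $G$-invariants of $\mathbb{E}^{(i)}_{\mathfrak{P}}\otimes M_{\mathfrak{P}}$, again a linear-algebra task on a space of dimension $\mathrm{poly}(\ell)$ over $\mathbb{F}_{\ell}$, and checks the resulting dimension against the fixed constant $\beta_2-2$.

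The main obstacle is bookkeeping: each $\mathcal{S}_i$ and each matrix entry of the $G$-action lives in a possibly large extension $\mathbb{F}_{\mathfrak{P}^i}$, and we must argue that the required $i$ stays polynomial in $\ell$. This is exactly the content of Lemma~\ref{edixlem}, which guarantees that $\mathbb{E}_{\mathfrak{P}}$ is already visible inside $\mathcal{S}_i$ for some $i\leq J=\mathrm{poly}(\ell)$. Once that bound is in hand, arithmetic in $\mathbb{F}_{\mathfrak{P}^i}$ contributes only an additional $\mathrm{poly}(\log(\#\mathbb{F}_{\mathfrak{P}^i}))=\mathrm{poly}(\ell\cdot \log(\#\mathbb{F}_{\mathfrak{P}}))$ factor. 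Combining the bounds from the five steps yields the stated overall complexity $\mathrm{poly}(\ell\cdot \mathrm{char}(\mathbb{F}_{\mathfrak{P}})\cdot \log(\#\mathbb{F}_{\mathfrak{P}}))$.
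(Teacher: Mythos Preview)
Your proposal is correct and follows essentially the same step-by-step approach as the paper's own proof: both bound each of the five steps of Algorithm~\ref{algo:edix} using the $\mathfrak{P}$-adic point-counting of Harvey/Lauder--Wan, Couveignes' \cite[Theorem~1]{couv} for $\mathcal{S}_i$, the $G$-action via Section~\ref{subsec:geomgal}, linear algebra for the equivariant hom-space, and, crucially, Lemma~\ref{edixlem} to bound the extension degree $J$. The paper additionally notes that $\beta_2$ itself can be computed as $\#\mathcal{Z}+2\beta_1+2-4g$, but otherwise your argument matches.
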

\begin{proof}
\hfill
    \begin{itemize}
        \item Step~\ref{algo:edix1}: One can use a $\mathfrak{P}$ -- adic algorithm such as that of Harvey \cite{harvey} \footnote{the curve case, specifically, is dealt with in \cite{kyng}} or Lauder-Wan \cite{LW} to count points on $\tilde{\mathcal{V}}_{\mathfrak{P}}$ to output its zeta function with the stated complexity. It is sufficient to count points upto an extension of degree bounded by the genus $\mathfrak{g}$, which in this case is bounded by a polynomial in $\ell$.
        
         \item Step~\ref{algo:edix2}: A basis for the space $\mathcal{S}_i$ can be computed in polynomial time using random sampling on the curve, following \cite[Theorem 1]{couv}, with knowledge of the zeta function.
        \item Step~\ref{algo:edix3}: The $G$ - action is computed on the points of the curve $\tilde{\mathcal{V}}_{\mathfrak{P}}$ following~\ref{subsec:geomgal} in polynomial time. The number $J$ is bounded by a polynomial in $\ell$ by Lemma~\ref{edixlem}. Further, given a basis of each subspace $\mathcal{S}_i$, the $G$ -- action can be computed in polynomial time on the basis, following the last part of \cite[Theorem 1]{couv}.
        
        \item Step~\ref{algo:edix4}: Computing $M$ is possible in polynomial time with $G$ -- action by the algorithms of (\ref{subsec:computevan}). The reduction can also be computed in poly-time. Next, the $G$ - action on the dual $M^{\vee}_{\mathfrak{P}}$ can be computed using the action on $M_{\mathfrak{P}}$ for the $G$ -- action via the duality given by the symplectic Weil pairing. Equivalently, given the $G$ -- action on $M_{\mathfrak{P}}$, there is a natural $G$ -- action on $M_{\mathfrak{P}}^{\vee}$ via $(g\cdot \lambda) (m)=\lambda(g^{-1}\cdot m)$ for $\lambda \in M_{\mathfrak{P}}^{\vee}$ and $m\in M_{\mathfrak{P}}$.
        
        Next, the dimension of the space $\mathrm{Hom}_{G}(M_{\mathfrak{P}}^{\vee}, \mathcal{S}_i)$ is bounded independently of $\ell$, and each $G$ -- equivariant homomorphism can be computed as a matrix via linear algebra.
        
        In other words, there are only $\mathrm{poly}(\ell)$ homs, and a basis for the sum of their images can be found using \cite[Theorem 1]{couv}.

        \item Step~\ref{algo:edix5}: One can list all the invariant tensors with knowledge of the $G$ -- action. Further, zero-testing is efficient and can be done in polynomial time, so it simply remains to count the number of invariant tensors in each space, which is always bounded by a polynomial in $\ell$. Finally the Betti number $\beta_2$ can be computed as $\#\mathcal{Z}+2\beta_1+2-4g$, where $g$ is the genus of the generic fibre of the pencil.

    \end{itemize}
\end{proof}

\begin{lemma}
\label{lem:maincomp}
Algorithm~\ref{algo:main} runs in time $\mathrm{poly}(\ell)$.
\end{lemma}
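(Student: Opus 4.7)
The plan is to go through each step of Algorithm~\ref{algo:main} and verify that it runs in time polynomial in $\ell$, invoking the complexity lemmas already established for the sub-procedures.

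First, in Step~\ref{algo:main1}, a smooth rational point $u\in \mathcal{U}(K)$ of bounded height and degree can be obtained by sampling: since $\mathcal{Z}$ is a fixed finite set independent of $\ell$, this is constant-time. Step~\ref{algo:main2} amounts to specialising the primitive element $\bm{\tau}$ to a Laurent series expansion around $u$ (rather than a true Puiseux series, since $u\in \mathcal{U}$ is unramified for $\mathcal{F}$) using Theorem~\ref{thm:ntpu} with $\mathrm{poly}(\ell)$ truncation, which by Lemma~\ref{lem:puibound} is enough to distinguish all $\ell^{2g}$ elements of $\mathcal{F}_{\overline{\eta}}$. Step~\ref{algo:main3} applies Algorithm~\ref{algo:mono} once per generator $\sigma_j$ (there are $r=\#\mathcal{Z}$ of them, a constant depending on $\mathcal{X}$ but not $\ell$) and then solves a linear system over $\mathbb{F}_\ell$ in dimension $2g$, hence runs in $\mathrm{poly}(\ell)$ by the preceding complexity lemma for Algorithm~\ref{algo:mono}. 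Step~\ref{algo:main4} consists of applying $\mathrm{Gal}(\overline{K}/K)$ coefficient-wise to the Laurent series representations, using that the coefficients live in a $\mathrm{poly}(\ell)$-degree extension of $K$ of $\mathrm{poly}(\ell)$ height (Theorem~\ref{thm:tors_height}); only a basis of the fixed subspace (of dimension bounded by $\beta_1$, independent of $\ell$) needs to be processed.

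Step~\ref{algo:main5} computes an orthogonal complement under the Weil pairing on $\mathcal{F}_u$, which by Theorem~\ref{thm:pairing} and Algorithm~\ref{algo:pairing} can be computed in $\mathrm{poly}(\ell)$ time per pair; this is then a standard $\mathbb{F}_\ell$-linear-algebra computation in dimension $2g$. Step~\ref{algo:main6} invokes Algorithm~\ref{algo:edix} on the auxiliary prime $\mathfrak{P}$, whose characteristic and residue field size are both $O(\ell)$; by the complexity lemma for Algorithm~\ref{algo:edix}, this is $\mathrm{poly}(\ell\cdot\mathrm{char}(\mathbb{F}_\mathfrak{P})\cdot \log\#\mathbb{F}_\mathfrak{P})=\mathrm{poly}(\ell)$. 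A suitable $\mathfrak{P}$ of good reduction of size $O(\ell)$ can be located by trial division in polynomial time, since the bad primes form a fixed finite set (depending only on $\mathcal{X}$).

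The main obstacle is Step~\ref{algo:main7}: the Hensel lift from $\mathbb{E}_\mathfrak{P}$ to the characteristic-zero Edixhoven subspace $\mathbb{E}$. By Mascot's Theorem~\ref{thm:mascot}, the runtime is $\widetilde{O}(\mathrm{poly}(\mathfrak{g}\cdot \log\#\mathbb{F}_\mathfrak{P}\cdot e\cdot \ell^{s}))$, where $\mathfrak{g}$ is the genus of $\tilde{\mathcal{V}}$ (bounded by $\mathrm{poly}(\ell)$), $\log\#\mathbb{F}_\mathfrak{P}=O(\log\ell)$, and $s=\dim \mathbb{E}\leq \beta_2\cdot \dim M$ is bounded independently of $\ell$, so $\ell^{s}=\mathrm{poly}(\ell)$. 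The accuracy parameter $e$ is the critical quantity: it must be large enough that the $\mathfrak{P}$-adic approximations of all $\ell^{s}$ linear combinations uniquely recover characteristic-zero divisors. By Theorem~\ref{thm:edixdivhtbd}, the Weil heights of the points in the support of each divisor $\mathbf{D}_x$ for $x\in\mathbb{E}$ are bounded by a polynomial in $\ell$, so it suffices to take $e=\mathrm{poly}(\ell)$; overall the step runs in $\mathrm{poly}(\ell)$ time.

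Finally, Steps~\ref{algo:main8} and~\ref{algo:main9} are linear-algebra operations on the finite-dimensional space $\mathbb{E}\otimes \mathcal{E}_u$: the $G$-invariant subspace is carved out by solving a linear system over $\mathbb{F}_\ell$ of dimension $\mathrm{poly}(\ell)$, and the $\mathrm{Gal}(\overline{K}/K)$-action is then computed element-wise on a basis of $(\mathbb{E}\otimes \mathcal{E}_u)^G$, which has dimension bounded by $\beta_2$ (independent of $\ell$). Appending the cyclotomically-acted classes $\langle\gamma_E\rangle\oplus\langle\gamma_F\rangle$ is trivial. Summing the polynomial bounds across all nine steps gives the claimed $\mathrm{poly}(\ell)$ complexity.
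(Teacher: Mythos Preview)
Your proof is correct and follows essentially the same step-by-step structure as the paper's own argument: both walk through Algorithm~\ref{algo:main} line by line, invoking the previously established complexity lemmas for the subroutines (Algorithms~\ref{algo:cospec}, \ref{algo:mono}, \ref{algo:edix}), the height bound Theorem~\ref{thm:edixdivhtbd} to control the precision $e$ in Mascot's lift, and the fact that $\dim\mathbb{E}$ is bounded independently of $\ell$ so that $\ell^{s}=\mathrm{poly}(\ell)$. Your treatment is slightly more explicit in places (e.g.\ spelling out why $\ell^{s}$ is polynomial and how a good $\mathfrak{P}$ is found), but the underlying reasoning is the same.
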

\begin{proof}
\hfill
\begin{itemize}
\item Step~\ref{algo:main1}: This can be done in polynomial time.
\item Step~\ref{algo:main2}: The complexity is the same as that of computing a Puiseux series expansion, except we are now working around a smooth point. The $\mathrm{poly}(\ell)$ truncation bounds remain, and the total complexity is the same as that of Algorithm~\ref{algo:cospec}.
\item Step~\ref{algo:main3}: Follows from the complexity of Algorithm~\ref{algo:mono}.
\item Step~\ref{algo:main4}: The arithmetic $\mathrm{Gal}(\overline{K}/K)$ action on $\mathcal{F}_{u}^{G}$ factors via a finite extension $K'/K$ that the subspace is rational over. This extension has degree bounded by a polynomial in $\ell$, as its Galois group is a subgroup of $\mathrm{GL}(\mathcal{F}_u^{G})$, whose rank is independent of $\ell$.
\item Step~\ref{algo:main5}: Computing the Weil pairing on $\mathcal{F}_u$ has a polynomial time algorithm.

\item Step~\ref{algo:main6}: Follows from the complexity of Algorithm~\ref{algo:edix}. A key point, as mentioned earlier, is that $\dim \mathbb{E}_{\mathfrak{P}}$ is independent of $\ell$.
\item Step~\ref{algo:main7}: Follows from \cite{mascothensel}, i.e., the complextiy of Theorem~\ref{thm:mascot}. The preceision $e$ required depends on the complexity of the algebraic numbers occurring in an explicit description of the Edixhoven subspace. We know by Theorem~\ref{thm:edixdivhtbd} that the heights are bounded by a polynomial in $\ell$. Further, the points occurring in the support of the divisors concerned, each also have degree bounded by a polynomial in $\ell$, as the Edixhoven subspace becomes rational over such an extension.

\item Step~\ref{algo:main8}: The $G$- action on the space of tensors $\mathbb{E}\otimes \mathcal{E}_u$ can be computed element by element, as its dimension is now independent of $\ell$. 
\item Step~\ref{algo:main9}: Again the $\mathrm{Gal}(\overline{K}/K)$ action factors through a finite extension $K''/K$, with degree bounded by a polynomial in $\ell$. Its action on   $\mathbb{E}$ is obtained via points on $\tilde{\mathcal{V}}$, and the action on $\mathcal{E}_u$ can be computed akin to Step~\ref{algo:main4}.

\end{itemize}
\end{proof}

\section{Conclusion}
In this article, we have provided an algorithm to compute the number of points on a fixed, nice surface in polynomial time, having made its \'etale cohomology groups explicit. An area for improvement would be the dependence of the total complexity on the degree of the surface which is, at the moment, exponential. In another direction, one could ask if in the realm of \textit{quantum} algorithms, the dependence on the degree could be made polynomial. The immediate next question, with regard to point counting, is that of algorithms for varieties of a higher dimension, to begin with, threefolds. Specifically, one would need a method to compute vanishing cycles, the Poincar\'e duality pairing in $\mathrm{H}^{2}$ and the corresponding trivialising cover.

%The techniques described in this paper (particularly that of Puiseux expansions to realise the cospecialisation maps) use crucially the fact that the first cohomology is an abelian scheme, i.e., can be described by solutions to polynomial equations. This does not continue to hold for the second cohomology, so it appears that new inputs are needed.

%\MV{mention practicality} H3, improving complexity etc

\section*{Acknowledgements}
We thank Jean-Pierre Serre for comments on an earlier draft, particularly with regard to his question from \cite{serre}. We thank Joe Silverman and Robin de Jong for pointing us to the literature on heights. We thank Robin de Jong for detailed comments on an earlier draft, that has significantly improved our exposition. We thank Felipe Voloch for discussions leading to the discovery of a gap in an earlier version of this manuscript. We are grateful to T.N. Venkataramana, Arvind Nair and Kiran Kedlaya for discussions, and the institutions Ashoka University, ICTS, TIFR and MIT for their hospitality. We thank the organisers and attendees of the special sessions on `Arithmetic geometry \& Number theory' and `Computational number theory' at the Joint Meeting of the NZMS, AustMS and AMS, held in Auckland, for their feedback on a preliminary talk based on this work. This research work was partially supported by the Research-I Foundation of the Department of Computer Science \& Engineering of IIT-Kanpur. N.S. thanks the funding support from DST-SERB (CRG/2020/45 + JCB/2022/57) and N. Rama Rao Chair.  M.V. is supported by a C3iHub research fellowship.

%Appendices after Refs
\bibliographystyle{alpha}
\bibliography{surfaces.bib}

\newcommand{\etalchar}[1]{$^{#1}$}
\begin{thebibliography}{LPPV24}

\bibitem[And97]{andersonabel}
Greg~W Anderson.
\newblock {An explicit algebraic representation of the Abel map.}
\newblock {\em IMRN: International Mathematics Research Notices}, 1997(11), 1997.

\bibitem[And02]{anderson}
Greg~W Anderson.
\newblock {Abeliants and their application to an elementary construction of Jacobians}.
\newblock {\em Advances in Mathematics}, 172(2):169--205, 2002.

\bibitem[And04]{andersontheta}
Greg~W Anderson.
\newblock {Edited 4-$\Theta$ embeddings of Jacobians}.
\newblock {\em Michigan Mathematical Journal}, 52(2):309--339, 2004.

\bibitem[Bug04]{bugeaudbook}
Yann Bugeaud.
\newblock {\em Approximation by algebraic numbers}.
\newblock Cambridge University Press, 2004.

\bibitem[CE11]{ceramanujan}
Jean-Marc Couveignes and Bas Edixhoven.
\newblock {\em Computational aspects of modular forms and Galois representations}.
\newblock Princeton University Press, 2011.

\bibitem[CF{\etalchar{+}}12]{handbook}
Henri Cohen, Gerhard Frey, et~al.
\newblock {\em Handbook of Elliptic and Hyperelliptic Curve Cryptography, Second Edition}.
\newblock Chapman \& Hall/CRC, 2nd edition, 2012.

\bibitem[Cho54]{chow}
Wei-Liang Chow.
\newblock {The Jacobian variety of an algebraic curve}.
\newblock {\em American Journal of Mathematics}, 76(2):453--476, 1954.

\bibitem[Cou06]{hhs}
Jean-Marc Couveignes.
\newblock {Hard Homogeneous Spaces}.
\newblock Cryptology {ePrint} Archive, Paper 2006/291, 2006.

\bibitem[Cou09]{couv}
Jean-Marc Couveignes.
\newblock {Linearizing torsion classes in the Picard group of algebraic curves over finite fields}.
\newblock {\em Journal of Algebra}, 321(8):2085--2118, 2009.

\bibitem[CTS21]{cts}
Jean-Louis Colliot-Th{\'e}l{\`e}ne and Alexei~N Skorobogatov.
\newblock {\em {The Brauer-Grothendieck group}}, volume~71.
\newblock Springer, 2021.

\bibitem[DD13]{dok}
Tim Dokchitser and Vladimir Dokchitser.
\newblock {Identifying Frobenius elements in Galois groups}.
\newblock {\em Algebra \& Number Theory}, 7(6):1325--1352, 2013.

\bibitem[Del74]{Weili}
Pierre Deligne.
\newblock La conjecture de {Weil} : {I}.
\newblock {\em Publications Math\'ematiques de l'IH\'ES}, 43:273--307, 1974.

\bibitem[dJ04]{dejong}
Robin~S de~Jong.
\newblock {\em {Explicit Arakelov Geometry}}.
\newblock PhD thesis, Universiteit van Amsterdam, 2004.

\bibitem[EDJS10]{edcovers}
Bas Edixhoven, Robin De~Jong, and Jan Schepers.
\newblock Covers of surfaces with fixed branch locus.
\newblock {\em International Journal of Mathematics}, 21(07):859--874, 2010.

\bibitem[Gab83]{gabber}
Ofer Gabber.
\newblock Sur la torsion dans la cohomologie $\ell$-adique d’une vari{\'e}t{\'e}.
\newblock {\em CR Acad. Sci. Paris S{\'e}r. I Math}, 297(3):179--182, 1983.

\bibitem[Gel84]{langlands}
Stephen Gelbart.
\newblock {An elementary introduction to the Langlands program}.
\newblock {\em Bulletin of the American Mathematical Society}, 10(2):177--219, 1984.

\bibitem[Har15]{harvey}
David Harvey.
\newblock Computing zeta functions of arithmetic schemes.
\newblock {\em Proceedings of the London Mathematical Society}, 111(6):1379--1401, 2015.

\bibitem[Hes02]{hessweiss}
Florian Hess.
\newblock {An algorithm for computing Weierstrass points}.
\newblock In {\em International Algorithmic Number Theory Symposium}, pages 357--371. Springer, 2002.

\bibitem[HI94]{hirr}
Ming-Deh Huang and Doug Ierardi.
\newblock {Efficient algorithms for the Riemann-Roch problem and for addition in the Jacobian of a curve}.
\newblock {\em Journal of Symbolic Computation}, 18(6):519--539, 1994.

\bibitem[HI98]{huaier}
Ming-Deh Huang and Doug Ierardi.
\newblock Counting points on curves over finite fields.
\newblock {\em Journal of Symbolic Computation}, 25(1):1--21, 1998.

\bibitem[HM17]{hickel}
Michel Hickel and Micka{\"e}l Matusinski.
\newblock {On the algebraicity of Puiseux series}.
\newblock {\em Revista Matem{\'a}tica Complutense}, 30:589--620, 2017.

\bibitem[HS83]{hilliker}
David~Lee Hilliker and EG~Straus.
\newblock {Determination of bounds for the solutions to those binary Diophantine equations that satisfy the hypotheses of Runge’s theorem}.
\newblock {\em Transactions of the American Mathematical Society}, 280(2):637--657, 1983.

\bibitem[Igu56a]{igusa}
Jun-Ichi Igusa.
\newblock {Fibre systems of Jacobian varieties}.
\newblock {\em American Journal of Mathematics}, 78(1):171--199, 1956.

\bibitem[Igu56b]{igusaii}
Jun-Ichi Igusa.
\newblock {Fibre Systems of Jacobian Varieties:(II. Local Monodromy Groups of Fibre Systems)}.
\newblock {\em American Journal of Mathematics}, 78(4):745--760, 1956.

\bibitem[Igu58]{igusaabs}
Jun-Ichi Igusa.
\newblock Abstract vanishing cycle theory.
\newblock {\em Proceedings of the Japan Academy}, 34(9):589--593, 1958.

\bibitem[Jav14]{javan}
Ariyan Javanpeykar.
\newblock {Polynomial bounds for Arakelov invariants of Belyi curves}.
\newblock {\em Algebra \& Number Theory}, 8(1):89--140, 2014.

\bibitem[JP76]{jones}
Wayne Jones and Brian Parshall.
\newblock {On the 1-cohomology of finite groups of Lie type}.
\newblock In {\em Proceedings of the Conference on Finite Groups}, pages 313--328. Elsevier, 1976.

\bibitem[Ked06]{ked}
Kiran~S Kedlaya.
\newblock Quantum computation of zeta functions of curves.
\newblock {\em computational complexity}, 15(1):1--19, 2006.

\bibitem[KM04]{khuri1}
Kamal Khuri-Makdisi.
\newblock Linear algebra algorithms for divisors on an algebraic curve.
\newblock {\em Mathematics of Computation}, 73(245):333--357, 2004.

\bibitem[KM07]{khuri}
Kamal Khuri-Makdisi.
\newblock {Asymptotically fast group operations on Jacobians of general curves}.
\newblock {\em Mathematics of Computation}, 76(260):2213--2239, 2007.

\bibitem[Koz94]{Kozen}
Dexter Kozen.
\newblock {Efficient Resolution of Singularities of Plane Curves}.
\newblock {\em Foundation of Software Technology and Theoretical Computer Science}, 141:1 -- 11, 1994.

\bibitem[KV25]{gcd}
Hyuk~Jun Kweon and Madhavan Venkatesh.
\newblock Bornes de torsion et un th{\'e}or{\`e}me effectif du pgcd.
\newblock {\em arXiv preprint arXiv:2511.00431}, 2025.

\bibitem[Kyn22]{kyng}
Madeleine Kyng.
\newblock {Computing zeta functions of algebraic curves using Harvey’s trace formula}.
\newblock {\em Research in Number Theory}, 8(4):100, 2022.

\bibitem[Lev22]{levrat}
Christophe Levrat.
\newblock Calcul effectif de la cohomologie des faisceaux constructibles sur le site \'etale d'une courbe.
\newblock {\em arXiv:2209.10221}, 2022.

\bibitem[Lev24]{leve}
Christophe Levrat.
\newblock Computing the cohomology of constructible \'etale sheaves on curves.
\newblock {\em Journal de th\'eorie des nombres de Bordeaux}, 36(3):1085--1122, 2024.

\bibitem[LGS20]{lespa}
Aude Le~Gluher and Pierre-Jean Spaenlehauer.
\newblock {A fast randomized geometric algorithm for computing Riemann-Roch spaces}.
\newblock {\em Mathematics of Computation}, 89(325):2399--2433, 2020.

\bibitem[LPPV24]{lpv}
Pierre Lairez, Eric Pichon-Pharabod, and Pierre Vanhove.
\newblock Effective homology and periods of complex projective hypersurfaces.
\newblock {\em Mathematics of Computation}, 93(350):2985--3025, 2024.

\bibitem[LR87]{lr}
Robert~P Langlands and Michael Rapoport.
\newblock {Shimuravariet{\"a}ten und Gerben.}
\newblock {\em Journal f{\"u}r die reine und angewandte Mathematik}, 378:113--220, 1987.

\bibitem[LR10]{lr2}
David Lubicz and Damien Robert.
\newblock Efficient pairing computation with theta functions.
\newblock In {\em International Algorithmic Number Theory Symposium}, pages 251--269. Springer, 2010.

\bibitem[LR15]{lrpairing}
David Lubicz and Damien Robert.
\newblock {A generalisation of Miller's algorithm and applications to pairing computations on abelian varieties}.
\newblock {\em Journal of Symbolic Computation}, 67:68--92, 2015.

\bibitem[LW06]{LW}
Alan~G.B. Lauder and Daqing Wan.
\newblock Counting points on varieties over finite fields of small characteristic.
\newblock {\em Algorithmic Number Theory: Lattices, Number Fields, Curves and Cryptography}, 2006.

\bibitem[Mas20]{mascothensel}
Nicolas Mascot.
\newblock {Hensel-lifting torsion points on Jacobians and Galois representations}.
\newblock {\em Mathematics of Computation}, 89(323):1417--1455, 2020.

\bibitem[Mas23a]{mascot2023explicit}
Nicolas Mascot.
\newblock {Explicit Computation of a Galois Representation Attached to an Eigenform Over SL 3 from the {$\mathrm{H}^{2}_{\text{\'et}}$} of a Surface}.
\newblock {\em Foundations of Computational Mathematics}, 23(2):519--543, 2023.

\bibitem[Mas23b]{mascot}
Nicolas Mascot.
\newblock {Explicit computation of Galois representations occurring in families of curves}.
\newblock {\em arXiv preprint arXiv:2304.04701}, 2023.

\bibitem[Mil80]{milne80}
James~S Milne.
\newblock {\em Etale cohomology (PMS-33)}.
\newblock {Princeton University Press}, 1980.

\bibitem[Mil98]{milnelec}
James~S Milne.
\newblock Lectures on {\'e}tale cohomology.
\newblock Available on-line at \url{http://www.jmilne.org/math/CourseNotes/LEC.pdf}, 1998.

\bibitem[MO15]{mad}
David Madore and Fabrice Orgogozo.
\newblock Calculabilit{\'e} de la cohomologie {\'e}tale modulo $\ell$.
\newblock {\em {Algebra \& Number Theory}}, 9(7):1647--1739, 2015.

\bibitem[Mor00]{moriwaki}
Atsushi Moriwaki.
\newblock Arithmetic height functions over finitely generated fields.
\newblock {\em Inventiones mathematicae}, 140:101--142, 2000.

\bibitem[Pil90]{pila}
Jonathan Pila.
\newblock Frobenius maps of abelian varieties and finding roots of unity in finite fields.
\newblock {\em Mathematics of Computation}, 55(192):745--763, 1990.

\bibitem[PTvL15]{ptv}
Bjorn Poonen, Damiano Testa, and Ronald van Luijk.
\newblock {Computing N{\'e}ron--Severi groups and cycle class groups}.
\newblock {\em Compositio Mathematica}, 151(4):713--734, 2015.

\bibitem[PW21]{pazuki}
Fabien Pazuki and Martin Widmer.
\newblock {Bertini and Northcott}.
\newblock {\em Research in Number Theory}, 7:1--18, 2021.

\bibitem[R{\'e}m10]{remond}
Ga{\"e}l R{\'e}mond.
\newblock Nombre de points rationnels des courbes.
\newblock {\em Proceedings of the London Mathematical Society}, 101(3):759--794, 2010.

\bibitem[Rou99]{rouillier}
Fabrice Rouillier.
\newblock Solving zero-dimensional systems through the rational univariate representation.
\newblock {\em Applicable Algebra in Engineering, Communication and Computing}, 9(5):433--461, 1999.

\bibitem[RSV25]{rsv}
Diptajit Roy, Nitin Saxena, and Madhavan Venkatesh.
\newblock Complexity of counting points on curves and the factor $ p\_1 (t) $ of the zeta function of surfaces.
\newblock {\em arXiv preprint arXiv:2511.02262}, 2025.

\bibitem[Sch85]{schoof}
Ren{\'e} Schoof.
\newblock Elliptic curves over finite fields and the computation of square roots mod $p$.
\newblock {\em Mathematics of computation}, 44(170):483--494, 1985.

\bibitem[Ser12]{serrealg}
Jean-Pierre Serre.
\newblock {\em Algebraic groups and class fields}, volume 117.
\newblock Springer Science \& Business Media, 2012.

\bibitem[Ser16]{serre}
Jean-Pierre Serre.
\newblock {\em {Lectures on $N_X(p)$}}.
\newblock CRC Press, 2016.

\bibitem[Sil83]{silverman}
Joseph~H. Silverman.
\newblock Heights and the specialization map for families of abelian varieties.
\newblock {\em Journal für die reine und angewandte Mathematik}, 342:197--211, 1983.

\bibitem[Wal00]{walsh}
P~Walsh.
\newblock {A polynomial-time complexity bound for the computation of the singular part of a Puiseux expansion of an algebraic function}.
\newblock {\em Mathematics of Computation}, 69(231):1167--1182, 2000.

\bibitem[Wal04]{wall}
C.~T.~C. Wall.
\newblock {\em Singular Points of Plane Curves}.
\newblock London Mathematical Society Student Texts. Cambridge University Press, 2004.

\bibitem[Wil16]{wilms}
Robert Wilms.
\newblock {\em {The delta invariant in Arakelov geometry}}.
\newblock PhD thesis, Universit{\"a}ts-und Landesbibliothek Bonn, 2016.

\bibitem[ZM72]{zarhinmanin}
Ju~G Zarhin and Ju~I Manin.
\newblock Height on families of abelian varieties.
\newblock {\em Mathematics of the USSR-Sbornik}, 18(2):169, 1972.

\end{thebibliography}

\appendix
\section{Recovering zeta}
\label{app:rec}
The objective of this section of the appendix is to show how to recover the zeta function of a smooth, projective surface from the action of Frobenius on its \'etale cohomology groups. As usual, let $X\subset \mathbb{P}^{N}$ be a nice surface of degree $D$ obtained via good reduction from a nice surface $\mathcal{X}$  over a number field $K$, at  a prime $\mathfrak{p}\subset \mathcal{O}_{K}$.
Assume we have computed the action of the Frobenius endomorphism $F_{q}^{\star}$ on the cohomology groups $\mathrm{H}^{i}(X, \mathbb{Z}/\ell \mathbb{Z})$ for $0\leq i \leq 4$. We show how to recover the zeta function $Z(X/\mathbb{F}_{q}, T)$ and the point-count $\#X(\mathbb{F}_{q})$ as follows. Firstly, denote $\tilde{P}_{i}(T):=\det\left(1-TF_{q}^{\star
} \ \vert \ \mathrm{H}^{i}(X, \mathbb{Z}/\ell \mathbb{Z})\right)\in \mathbb{F}_{\ell}[T]$.
Consider the following exact sequence of \'etale sheaves on $X$ following \cite{gabber}
$$
0\longrightarrow \mathbb{Z}_{\ell}\longrightarrow \mathbb{Z}_{\ell}\longrightarrow \mathbb{Z}/\ell \mathbb{Z}\longrightarrow 0.
$$
As a result, we obtain the following from the associated long-exact-sequence on cohomology
\begin{equation}
\label{eqn:torcoho}
0\longrightarrow \mathrm{H}^{i}(X, \mathbb{Z}_{\ell})/(\ell \cdot \mathrm{H}^{i}(X, \mathbb{Z}_{\ell}))\longrightarrow \mathrm{H}^{i}(X, \mathbb{Z}/\ell \mathbb{Z})\longrightarrow \mathrm{H}^{i+1}(X, \mathbb{Z}_{\ell})[\ell]\longrightarrow 0.
\end{equation}
Writing 

$$P_{i}'(T):=\det\left(1- TF_{q}^{\star} \ \vert \ \mathrm{H}^{i}(X, \mathbb{Z}_{\ell})[\ell]\right) \  \text{and} \ \overline{P}_{i}(T):= \det\left(1- TF_{q}^{\star} \ \vert \ \mathrm{H}^{i}(X, \mathbb{Q}_{\ell})\right) \text{mod} \ \ell , $$
we see from (\ref{eqn:torcoho}) that
$$
\tilde{P}_{i}(T)=\overline{P}_{i}(T)
\cdot P_{i}'(T) \cdot P_{i+1}' (T).
$$
In particular if we write $Z(X/ \mathbb{F}_{q}, T)=P(T)/Q(T)$ for $P(T), Q(T)\in \mathbb{Z}[T]$, we see that
$$
\frac{\overline{P}(T)}{\overline{Q}(T)}=\prod_{i=0}^{4}(\tilde{P}_{i}(T))^{(-1)^{i+1}}
$$
where $\overline{P}(T):=P(T) \ \text{mod} \ \ell$ and $\overline{Q}(T):= Q(T) \ \text{mod} \ \ell$. This implies that the zeta function can be recovered as an application of the Chinese remainder theorem using the polynomials $\tilde{P}_{i}(T)$ for finitely many primes $\ell$. We now give bounds for the number and size for the primes required. Write $$\beta_{i}:=\dim \mathrm{H}^{i}(X, \mathbb{Q}_{\ell})= \deg P_{i}(X/\mathbb{F}_{q}, T)$$ for the $i^{\text{th}}$ $\ell$ -- adic Betti number of $X$. By \cite[\S 4.2]{rsv}, we know $\beta_{1}=\beta_{3}\leq 2D^{2}$ and $\beta_{2}\leq 2D^{N+1}$. As a result of Deligne's proof \cite{Weili} of the Weil-Riemann hypothesis for $X$, we know that the reciprocal roots of $P_{i}(X/ \mathbb{F}_{q}, T)$ have absolute value $q^{i/2}$. This implies that the coefficients of each polynomial $P_{i}(T)$ are bounded above by
$$
\binom{2D^{N+1}}{D^{N+1}}q^{D^{N+1}}.
$$

In particular, it suffices to compute $P_{i}(T) \ \text{mod} \ \ell$ for all primes $\ell \leq A \log q$ where $A=9\cdot D^{N+1}+3$. Further, observe that
$$
\frac{d}{dT}\log Z(X/\mathbb{F}_{q}, T)=\sum_{j=1}^{\infty}\#X(\mathbb{F}_{q^{j}})T^{j-1}=\frac{Q(T)\dot{P}(T)-P(T)\dot{Q}(T)}{P(T)Q(T)},
$$
so $\#X(\mathbb{F}_{q})$ can be read off as the constant term of the power-series expansion associated to the logarithmic derivative of $Z(X/ \mathbb{F}_{q}, T)$. 
\begin{remark}
    We note that we may need to work over field extensions $\mathbb{F}_{Q}/\mathbb{F}_{q}$ (e.g., to ensure the existence of a smooth fibre of $\pi$) and compute the $F_{Q}$ -- zeta function. The base zeta function can be recovered from any two such, via a recipe due to Kedlaya \cite[\S 8]{ked}.

\end{remark}

\section{Height bounds}
\label{app:height}
In this section, we recall the theory of heights and state certain height bounds to complement our algorithms. \\ \\
Let $K/\mathbb{Q}$ be a number field. Denote by $M_{K}$ the set of places of the ring of integers $\mathcal{O}_{K}$ and denote by $v_{\mathfrak{p}}$ for $\mathfrak{p}\in M_{K}$ the associated $\mathfrak{p}$ -- adic valuation. Let ${K}_{\mathfrak{p}}$ denote the completion of $K$ and set $n_{v_{\mathfrak{p}}}=[{K}_{\mathfrak{p}}: \mathbb{Q}_{\mathfrak{p}}]$.

\begin{definition}
    Let $P=[x_{0}:\ldots : x_{N}]\in \mathbb{P}^{N}({K})$ be a point. The \textit{Weil height} $h(P)$ is defined as
    $$
    h(P)=\frac{1}{[{K}:\mathbb{Q}]}\sum_{\mathfrak{p}}n_{v_{\mathfrak{p}}}\cdot\left(\log (\max_{j}\Vert x_{j} \Vert_{v_{\mathfrak{p}}})\right).
    $$
\end{definition}

\begin{definition}
Let $C$ be a curve over $K$ and let $J$ denote its Jacobian. The \textit{N\'eron-Tate height}, denoted $\hat{h}$ for a point $P\in J$ is defined as follows
\begin{equation}
    \hat{h}(P):=\lim_{j\rightarrow \infty}\frac{h(2^{j}P)}{4^{j}}.
\end{equation}

\end{definition}
It is clear that the N\'eron-Tate height vanishes on torsion points. We next recall the following, that relates the two height functions introduced above, on an abelian variety.

\begin{theorem}[Zarhin-Manin]
\label{thm:zarhin}

    Let $A$ be a polarised abelian variety over a number field $K$, together with an ample, symmetric line bundle $\Theta$.
    Then, there exist constants $c_{1}$ and $c_{2}$, depending on $A$ and $g$ such that for any $P\in A(\overline{{K}})$,
    \begin{equation}
    \label{eqn:zarhin}
    \hat{h}(P)- c_{1} \leq h(P)\leq \hat{h}(P)+c_{2}
    \end{equation}
    with
    $$
    c_{1}=\left(\frac{2^{2g-1}}{3}+1\right)\cdot h_{\Theta}(A)+\left(2^{2g-2}+\frac{67}{12}\right)\cdot g\cdot \log 2 \ \ \mathrm{and} \ \ 
    c_{2}=(2^{2g}-1)\cdot h_{\Theta}(A)+(2^{2g+1}-\frac{1}{3})\cdot g\cdot \log 2,
    $$

    where $h_{\Theta}(A)$ is the height of the neutral element $0_A$ of $A$.
\end{theorem}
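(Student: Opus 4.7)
The plan is to derive the bound from the classical Tate construction of the N\'eron--Tate height, tracking constants quantitatively. The starting input is the theorem of the cube for a symmetric line bundle, which yields
$$[2]^\star\Theta \;\cong\; \Theta^{\otimes 4}$$
on $A$. Functorially this translates into the approximate functional equation
$$h_\Theta(2P) \;=\; 4\,h_\Theta(P) \;+\; \epsilon(P),$$
where $\epsilon(P) := h_\Theta(2P) - 4 h_\Theta(P)$ is a bounded function on $A(\overline{K})$. The N\'eron--Tate height is then constructed as the Tate limit, and one sees directly that
$$\hat h(P) \;=\; \lim_{j \to \infty} \frac{h_\Theta(2^j P)}{4^j} \;=\; h_\Theta(P) \;+\; \sum_{j \geq 0} \frac{\epsilon(2^j P)}{4^{j+1}}.$$
Convergence is uniform in $P$ provided $\epsilon$ is bounded, and the series is the object we need to estimate.

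The key quantitative step is to bound $\epsilon(P)$ above and below uniformly in terms of $h_\Theta(A)$ and $g$. I would embed $A \hookrightarrow \mathbb{P}^N$ using a symmetric basis of $H^0(A,\Theta^{\otimes 2})$ (very ample for symmetric $\Theta$, with $N+1$ controlled by $2^g$ via Riemann--Roch for abelian varieties). In these coordinates the duplication morphism $[2] : A \to A$ is given by explicit polynomial maps of degree $4$; classical theta-function identities (the Riemann relations) make the coefficients of these polynomials explicit in terms of theta constants whose heights are bounded linearly by $h_\Theta(A)$, with a combinatorial dependence on $g$ entering through the dimension $N+1$. Computing $h_\Theta(2P)$ by evaluating these polynomials and applying the ultrametric inequality non-archimedeanly together with the triangle inequality archimedeanly produces explicit constants $c_{+}, c_{-}$ with
$$-c_- \;\leq\; \epsilon(P) \;\leq\; c_+.$$

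Summing the geometric series $\sum_{j \geq 0} 4^{-(j+1)} = 1/3$ gives
$$-\tfrac{c_-}{3} \;\leq\; \hat h(P) - h_\Theta(P) \;\leq\; \tfrac{c_+}{3},$$
and rearranging yields exactly the asserted inequalities with $c_1 \asymp c_-/3$ and $c_2 \asymp c_+/3$. The asymmetry between the two constants reflects the asymmetric control on a maximum of polynomial values when one passes from lower to upper bounds at archimedean places.

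The main obstacle is pinning down the precise numerical constants $\left(\tfrac{2^{2g-1}}{3}+1\right)$, $(2^{2g}-1)$, and the $g\log 2$ terms. The factor $2^{2g}$ arises from $\dim H^0(A,\Theta^{\otimes 2}) = 2^g \deg(\Theta)$ via Riemann--Roch; the $1/3$ is the geometric-series factor; the integer shifts like $+1$ and $-1/3$ come from carefully separating the trivial base-of-induction term $j=0$ from the telescoping tail; and the $g \log 2$ terms encode the archimedean comparison between sup-norm and $\ell^2$-norm of sections, with constants bounded by logarithms of binomial coefficients. I would not reproduce the bookkeeping for the precise rationals here, but rather cite the refined explicit forms of the Zarhin--Manin inequality (as appear, e.g., in the work of David--Philippon or in Hindry--Silverman), since the numerical constants in the statement match the standard explicit versions of this comparison.
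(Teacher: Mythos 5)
Your proposal is correct in outline, but it ends where the actual content of this statement lies. The paper's own proof is a one-line citation: it applies \cite[3.2]{zarhinmanin} to the divisor $4\cdot\Theta$, and the specific rational constants $c_1,c_2$ in the statement are exactly those produced by that explicit result. Your route reconstructs the underlying mechanism --- the theorem of the cube giving $[2]^{\star}\Theta\cong\Theta^{\otimes 4}$, the quasi-parallelogram defect $\epsilon(P)=h_\Theta(2P)-4h_\Theta(P)$, the telescoped Tate limit $\hat h(P)-h_\Theta(P)=\sum_{j\geq 0}\epsilon(2^jP)/4^{j+1}$, and the geometric-series factor $1/3$ --- which is a genuinely more informative account of \emph{why} the constants have the shape they do ($2^{2g}$ from $\dim H^0(A,\Theta^{\otimes 2})$, $g\log 2$ from archimedean norm comparisons). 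But the decisive quantitative step, bounding $\epsilon$ by explicit linear expressions in $h_\Theta(A)$ with precisely these coefficients, is not carried out; you defer it to ``refined explicit forms'' in David--Philippon or Hindry--Silverman, whereas the constants in the statement are those of Zarhin--Manin itself applied to $4\Theta$, and you have not checked that your proposed sources yield the same rationals. Since the paper likewise proves the theorem by citation, your argument is acceptable in spirit, but as written it asserts a match of constants it does not verify. One small slip to fix: with your sign conventions ($-c_-\leq\epsilon\leq c_+$), the left inequality $h\geq\hat h-c_+/3$ gives $c_1\asymp c_+/3$ and the right gives $c_2\asymp c_-/3$, i.e.\ the opposite attribution to the one you wrote; this also matters because the asymmetry of $c_1$ versus $c_2$ in the statement is tied to which side of the bound on $\epsilon$ is harder.
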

\begin{proof}
  Apply \cite[3.2]{zarhinmanin} to the divisor $4\cdot \Theta$.
\end{proof}

\begin{theorem}[Height of torsion point]
\label{thm:tors_height}
    Let $C\subset \mathbb{P}^{N}$ be a smooth, projective curve of genus $g$ and degree $D$ over a number field ${K}$, and denote by $J$ its Jacobian. Let $\ell$ be a prime number, and let $P\in J[\ell]$ be an $\ell$ -- torsion point. Consider the embedding of $J$ into $\mathbb{P}^{M}$ given by Theorem~\ref{thm:eqnjac}. Then, we have
    $$
    \vert h(P)\vert \leq C, 
    $$
    where $C$ is a constant that depends only on $N$, $g$, $D$, the height of the coefficients of the equations defining $C$, the extension degree, and the logarithm of the discriminant of the number field ${K}/\mathbb{Q}$. The dependence is polynomial in the last three items. In particular, the height of an $\ell$ -- torsion point is bounded by a quantity independent of $\ell$.
\end{theorem}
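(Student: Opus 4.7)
The strategy is to combine two ingredients: the vanishing of the N\'eron--Tate height on torsion, and the Zarhin--Manin comparison (Theorem~\ref{thm:zarhin}) between the N\'eron--Tate and Weil heights on a polarised abelian variety. Specifically, I would first note that for any $P \in J[\ell]$ one has $\ell P = 0_J$; since $\hat h$ is a positive semidefinite quadratic form on $J(\overline K)$ associated to the symmetric ample class $\Theta$ used in the embedding, $\hat h(\ell P) = \ell^2 \hat h(P) = 0$, whence $\hat h(P) = 0$.

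Next I would apply Theorem~\ref{thm:zarhin} to the pair $(J, \Theta)$, which yields
\begin{equation*}
|h(P)| \;\le\; \max(c_1, c_2),
\end{equation*}
where $c_1$ and $c_2$ are the explicit linear expressions in the genus $g$ and in the height $h_\Theta(J)$ of the neutral element of the embedded Jacobian. Crucially, neither constant has any a priori dependence on $\ell$; the only conceivable source of such dependence is $h_\Theta(J)$ itself, so the proof reduces to bounding this single quantity in terms of the listed data of $C$.

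To control $h_\Theta(J)$, I would appeal to the construction of the embedding $J \hookrightarrow \mathbb P^M$ supplied by Theorem~\ref{thm:eqnjac} (Anderson's algorithm, reviewed in the appendix). Since that algorithm takes as input only the defining equations of $C$ (together with their coefficient heights), the discrete invariants $N, g, D$, the extension degree $[K:\mathbb Q]$, and the arithmetic information encoded in $\log \mathrm{disc}(K/\mathbb Q)$, and proceeds via a fixed sequence of Riemann--Roch computations, linear algebra, and elimination whose height growth is controlled polynomially in these inputs, the coordinates of $0_J \in \mathbb P^M$ produced by the procedure have Weil height bounded polynomially in $[K:\mathbb Q]$ and $\log \mathrm{disc}(K/\mathbb Q)$, and by a function (not explicitly tracked) of the remaining discrete data. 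This yields a bound on $h_\Theta(J)$ of the shape demanded by the theorem, completely independent of $\ell$.

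The main obstacle I anticipate is precisely the verification of the height tracking in the third paragraph: one must audit each intermediate step of Anderson's construction --- choice of basis of the relevant Riemann--Roch spaces, elimination of auxiliary variables, and the normalisation of the symmetric ample polarisation --- to confirm that the recipe is genuinely canonical in $C$ and never tacitly depends on $\ell$. Once this is confirmed, the implication $\hat h(P) = 0 \Rightarrow |h(P)| \le \max(c_1, c_2)$ from Zarhin--Manin closes the argument, with $\ell$-independence following automatically because both $C$ and its embedded Jacobian are fixed before any prime $\ell$ enters the picture.
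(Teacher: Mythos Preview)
Your overall architecture matches the paper's: vanish $\hat h$ on torsion, then invoke the Zarhin--Manin comparison (Theorem~\ref{thm:zarhin}) to bound $h(P)$ by quantities depending only on $g$ and $h_\Theta(J)$. The divergence is in how you propose to control $h_\Theta(J)$.

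You suggest auditing Anderson's construction step by step to track the height of $0_J$ polynomially in the input data. The paper avoids this entirely: it first invokes Theorem~\ref{thm:edited} to observe that Anderson's embedding factors through (an edited version of) the $4\Theta$-embedding, so the relevant height is dominated by the standard theta height of $J$; it then cites Pazuki and R\'emond for off-the-shelf bounds on the theta height of a Jacobian in terms of the arithmetic of the curve and its field of definition. This is both shorter and more robust than a bespoke audit of Anderson's algorithm. Your route is not wrong in principle, but the ``main obstacle'' you flag is real and nontrivial, whereas the paper's route outsources it to existing literature. One concrete point you elide: Zarhin--Manin is stated for a specific ample symmetric $\Theta$, so before applying it you do need the comparison between Anderson's embedding and a $4\Theta$-type embedding --- exactly what Theorem~\ref{thm:edited} supplies and what your proposal does not mention.
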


\begin{proof}
    As $P$ is assumed to be torsion, we know $\hat{h}(P)=0$. We note firstly, that by Theorem~\ref{thm:edited}, the height of the Jacobian constructed in Theorem~\ref{thm:eqnjac} is bounded above by the height associated to the $4\cdot \Theta$ -- embedding. The result then follows from Theorem~\ref{thm:zarhin}, combined with the results of \cite[\S 2]{pazuki} and \cite[\S 1]{remond}. 
\end{proof}

\begin{remark}
     Theorem~\ref{thm:tors_height} holds with the base field $K$ replaced by a function field $\mathbb{F}_{q}(t)$ or a function field over a number field $K(t)$. We merely change the notion of height; in the former case, one uses a geometric height function, and in the latter case, a height function that captures both the geometric and arithmetic data, such as Moriwaki's height function \cite{moriwaki}. The general underlying principle is that the naive height only differs from the canonical height by a bounded amount (see \cite[\S 4]{silverman}).
\end{remark}
We now recall a result of Javanpeykar, which resolves a conjecture of Edixhoven-de Jong-Schepers \cite[Conjecture 5.1]{edcovers}, that bounds the Faltings height of the Jacobian of a ramified covering of the projective line.
\begin{theorem}[Javanpeykar]
\label{javanp}
   Let $U\subset \mathbb{P}^{1}_{\mathbb{Z}}$ be a nonempty open subscheme. There exist integers $a,b\in \mathbb{Z}_{>0}$ such that for any prime $\ell$, and any connected finite \'etale cover $$\Psi: V\rightarrow U_{\mathbb{Z}[1/\ell]},$$ the Faltings height of the Jacobian of the normalisation of $\mathbb{P}^{1}$ in the function field of $V$ is bounded by $$(\deg \Psi)^{a}$$ where $a$ is a constant that depends only on the height of $Z=\mathbb{P}^{1}_{\mathbb{Q}}\setminus U_{\mathbb{Q}}$ and the action of $\mathrm{Gal}(\overline{\mathbb{Q}}/\mathbb{Q})$ -- on $Z$. In particular, 
   $$
   a=6+\log\left(13\cdot 10^{6} \mathrm{A}\cdot (4 \mathrm{A} \mathrm{B})^{45\mathrm{A}^{3}2^{\mathrm{A}-2}\mathrm{A}!}\right)
    $$
    where $\mathrm{A}$ is the number of elements in the orbit of $Z$ under the action of $\mathrm{Gal}(\overline{\mathbb{Q}}/\mathbb{Q})$ and $\mathrm{B}$ is a bound for the height of the elements of $Z$.
    
\end{theorem}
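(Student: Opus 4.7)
The plan is to express the Faltings height of $\mathrm{Jac}(W)$, where $W$ denotes the smooth compactification of $V$ (i.e.\ the normalisation of $\mathbb{P}^{1}$ in the function field of $V$), in terms of Arakelov-theoretic invariants of an arithmetic model $\mathcal{W}\to\mathrm{Spec}\,\mathcal{O}_{K}$ for a suitable number field $K$ over which $V$ is defined, and then bound each invariant by a polynomial in $\deg\Psi$ with exponent controlled solely by the pair $(\mathrm{A},\mathrm{B})$. First I would bound the genus: by Riemann--Hurwitz applied to the finite cover $\Psi:W\to\mathbb{P}^{1}$, which is ramified only above $Z$, one obtains $g_{W}\leq 1+\tfrac{1}{2}\deg\Psi\cdot(\#Z-2)$, so $g_{W}=O(\deg\Psi)$ with implicit constant depending only on $\#Z\leq\mathrm{A}$. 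After a finite unramified base extension I may assume $\mathcal{W}$ is semistable, which preserves $\mathfrak{h}_{F}(\mathrm{Jac}(W))$ up to a normalisation by $[K:\mathbb{Q}]$.

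Next, I would apply the arithmetic Noether formula to decompose $12\,\mathfrak{h}_{F}(\mathrm{Jac}(W))$ as a sum of three pieces: (i) the Arakelov self-intersection $\overline{\omega}_{\mathcal{W}/\mathcal{O}_{K}}^{2}$ of the relative dualising sheaf; (ii) the logarithm of the discriminant of $\mathcal{W}$, controlled by the Artin conductor of the vertical ramification; and (iii) the sum over archimedean places of Faltings' delta invariant $\sum_{\sigma}\delta_{F}(W_{\sigma})$. For (ii), since $\Psi$ is \'etale over $U_{\mathbb{Z}[1/\ell]}$, bad reduction is horizontal above $Z$ and vertical only at primes dividing the height of $Z$; tameness of the cover then yields local conductor bounds which sum to a polynomial-in-$\deg\Psi$ estimate whose exponent is combinatorially determined by $\mathrm{A}$ and $\mathrm{B}$. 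For (i), one pushes divisors on $\mathcal{W}$ forward along $\Psi$, uses that $\overline{\omega}_{\mathbb{P}^{1}}^{2}$ is explicitly computable, and controls the discrepancy via intersections of horizontal ramification divisors above $Z$, again giving a polynomial bound.

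The main obstacle is the archimedean piece (iii): bounding $\delta_{F}(W_{\sigma})$ uniformly as $\deg\Psi\to\infty$ requires controlling hyperbolic analytic invariants (Arakelov Green's functions, the Arakelov $(1,1)$-form, theta-function sup norms) on the Riemann surface $W_{\sigma}$, which can degenerate severely as the cover acquires short geodesics or pinched cusps near the preimages of $Z$. The strategy here is to pull the hyperbolic metric on $\mathbb{P}^{1}_{\sigma}\setminus Z_{\sigma}$ back along $\Psi_{\sigma}$ and to use Merkl-type estimates together with Jorgenson--Kramer bounds on Arakelov--Green's functions, producing an inequality of the shape $\delta_{F}(W_{\sigma})\ll g_{W}\log g_{W}$ with implicit constant depending only on the hyperbolic geometry of $\mathbb{P}^{1}(\mathbb{C})\setminus Z_{\sigma}$, hence only on $(\mathrm{A},\mathrm{B})$. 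Tracking the combinatorial dependence on the possible ramification patterns above each point of $Z$ (which range over partitions of $\deg\Psi$ and interact with the Galois action on $Z$) is precisely what forces the super-exponential factor $45\mathrm{A}^{3}2^{\mathrm{A}-2}\mathrm{A}!$ inside the logarithm defining $a$. Assembling (i)--(iii) and absorbing all numerical constants into a single exponent yields the stated bound $\mathfrak{h}_{F}(\mathrm{Jac}(W))\leq(\deg\Psi)^{a}$.
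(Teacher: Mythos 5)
The paper does not actually prove this statement: its ``proof'' is the citation to Javanpeykar \cite[Theorem 6.0.6]{javan}, so what you are implicitly trying to do is reprove a substantial external theorem. Your overall Arakelov-theoretic flavour is in the right spirit (bounding archimedean invariants by pulling back the hyperbolic metric of $\mathbb{P}^{1}\setminus Z$ along the cover and using Merkl-type Green's function estimates is genuinely close to the Javanpeykar--Bruin method), but the architecture of your argument has a structural flaw. You propose to bound $\mathfrak{h}_{F}$ from above via the arithmetic Noether formula by separately bounding $\overline{\omega}^{2}$, the discriminant, and $\sum_{\sigma}\delta_{F}(W_{\sigma})$. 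The discriminant term is not controlled by the hypotheses: the cover is \'etale only over $U_{\mathbb{Z}[1/\ell]}$, so at primes above $\ell$ (which is arbitrary, while the final bound must be uniform in $\ell$) you have no information whatsoever about the reduction of $W$, and ``tameness'' does not bound the conductor of $\mathrm{Jac}(W)$ in terms of $\deg\Psi$ in any direct way; your claim that bad reduction is ``vertical only at primes dividing the height of $Z$'' is unjustified. Similarly, $\overline{\omega}^{2}$ on a semistable model of $W$ carries admissible metrics that do not pull back from $\mathbb{P}^{1}$, so ``pushing divisors forward along $\Psi$'' does not bound it. In the actual proof the logic runs in the opposite direction: the stable Faltings height, which depends only on the geometric generic fibre $W_{\overline{\mathbb{Q}}}$ and hence needs no control of integral models at particular primes, is bounded directly via theta-norm and Green's-function formulas together with height bounds for suitable algebraic points attached to the cover, and only afterwards is the Noether formula used, combined with $\overline{\omega}^{2}\geq 0$ and the lower bound for $\delta_{F}$, to deduce bounds on the discriminant and on $\overline{\omega}^{2}$.

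The second genuine gap is the explicit constant. Attributing the factor $45\mathrm{A}^{3}2^{\mathrm{A}-2}\mathrm{A}!$ to ``tracking ramification partitions above $Z$'' is not substantiated, and it is not where the constant comes from: in the cited work the case of a general branch locus $Z$ is reduced to the Belyi case $Z=\{0,1,\infty\}$ by composing $\Psi$ with an explicitly constructed auxiliary morphism $\mathbb{P}^{1}\rightarrow\mathbb{P}^{1}$ (an effective Belyi-type map in the style of Khadjavi) whose degree is bounded by essentially $(4\mathrm{A}\mathrm{B})^{45\mathrm{A}^{3}2^{\mathrm{A}-2}\mathrm{A}!}$, defined over $\mathbb{Q}$ because it only uses the Galois orbit $Z$; the exponent $a$ is then the logarithm of that degree bound plus the fixed exponent coming from the polynomial bound of the Arakelov invariants in the Belyi degree. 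Without this reduction step --- or, alternatively, without making every analytic and height estimate in your sketch explicitly uniform in $\mathrm{A}$ and $\mathrm{B}$ for an arbitrary $Z$ --- your argument cannot produce the stated value of $a$, which is precisely the quantitative content the surrounding algorithmic analysis relies on.
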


\begin{proof}
   See \cite[Theorem 6.0.6]{javan}.
\end{proof}

\section{Results of Igusa}
\label{app:igusa}
In this appendix, we recall certain results of Igusa related to fibre systems of Jacobian varieties, their embeddings, and specialisation. This is then applied to the context of a Lefschetz pencil on a surface and the specialisation of the $\ell$ -- torsion in the Jacobian of the generic fibre. The treatment is based on the works \cite{igusa, igusaii, igusaabs}.

\hfill 

Let $\mathcal{X}\subset \mathbb{P}^{N}$ be a nice surface over a number field $K$ and let $\pi:\mathcal{X}\rightarrow \mathbb{P}^{1}$ be a Lefschetz pencil of hyperplane sections. Denote by $Z\subset \mathbb{P}^{1}$ the finite subset parametrising the nodal fibres and let $U=\mathbb{P}^{1}\setminus Z$. Let $\overline{\eta}\rightarrow \mathbb{P}^{1}$ be a geometric generic point and let the genus of the generic fibre $\mathcal{X}_{\overline{\eta}}$ (as a curve over the field $\overline{K}(t)$) be $g$. Write $\mathcal{F}:=R^{1}\pi_{\star}\mu_{\ell}$ for the derived pushforward. Consider an embedding of the Jacobian $\mathcal{J}_{\overline{\eta}}=\mathrm{Pic}^{0}(\mathcal{X}_{\overline{\eta}})$ into a projective space $\mathbb{P}^{M}$ \footnote{using e.g., Chow's method (\cite{chow} or \cite[Appendix]{igusa}) or Anderson's method (\cite{anderson}) sketched in Appendix~\ref{app:jac}, both of which involve the $\Theta$ -- divisor}. 

\begin{theorem}
\label{thm:igspec}
For $z\in \mathcal{Z}$, let $\widetilde{\mathcal{J}}_{z}$ be the specialisation of $\mathcal{J}_{\overline{\eta}}$ to $z$, over the specialisation $\mathcal{X}_{\overline{\eta}}\rightarrow \mathcal{X}_{z}$. Then, $\tilde{\mathcal{J}}_{z}$ is the completion of the generalised Jacobian \footnote{also called Rosenlicht variety} $\mathcal{J}_{z}$ of $\mathcal{X}_{z}$.
\end{theorem}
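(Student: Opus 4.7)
The plan is to reduce this to Igusa's original results \cite{igusa, igusaii} by localising the situation around $z$, then identifying the two objects by their universal properties. First, I would replace $\mathbb{P}^1$ by the spectrum of the strictly Henselian local ring $R = \widehat{\mathcal{O}}_{\mathbb{P}^{1}, z}$ with fraction field $\mathrm{Frac}(R)$ and residue field $\overline{K}$. The pullback $\mathcal{X}_R \to \mathrm{Spec}(R)$ is then a relative curve whose generic fibre is (geometrically) $\mathcal{X}_{\overline{\eta}}$ and whose closed fibre is the nodal curve $\mathcal{X}_z$. Over $\mathrm{Frac}(R)$ the Jacobian $\mathcal{J}_{\overline{\eta}}$ is embedded in a fixed $\mathbb{P}^M$ via a multiple of the theta divisor (coming from Chow or Anderson's construction, c.f.\ Theorem~\ref{thm:eqnjac}), and this embedding comes equipped with an explicit addition law defined by polynomial formulae over $R$.

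Next, I would take the scheme-theoretic closure $\mathcal{J}_R \subset \mathbb{P}^M_R$ of $\mathcal{J}_{\overline{\eta}}$ and let $\widetilde{\mathcal{J}}_z$ denote the closed fibre of the resulting $R$-flat scheme $\mathcal{J}_R \to \mathrm{Spec}(R)$. Flatness of the closure guarantees $\dim \widetilde{\mathcal{J}}_z = g$, so $\widetilde{\mathcal{J}}_z$ is an equidimensional projective variety of the same dimension as $\mathcal{J}_z$. The group structure on $\mathcal{J}_{\overline{\eta}}$, given by rational morphisms in the ambient projective coordinates, extends by specialisation to a rational group structure on $\widetilde{\mathcal{J}}_z$; the smooth locus $\widetilde{\mathcal{J}}_z^{\mathrm{sm}}$ therefore inherits the structure of a commutative algebraic group. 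This is exactly the content of Igusa's specialisation theorem for fibre systems of abelian varieties \cite[Theorem 1]{igusa}.

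The third step is to identify $\widetilde{\mathcal{J}}_z^{\mathrm{sm}}$ with $\mathcal{J}_z$. By the Néron mapping property, the smooth locus of $\mathcal{J}_R$ agrees with the identity component of the Néron model of $\mathcal{J}_{\overline{\eta}}$; and for a curve acquiring a single node at $z$, this identity component is canonically the generalised (Rosenlicht) Jacobian of the nodal fibre, fitting into a short exact sequence
\begin{equation*}
1 \longrightarrow \mathbb{G}_m \longrightarrow \mathcal{J}_z \longrightarrow \mathrm{Jac}(\widetilde{\mathcal{X}}_z) \longrightarrow 0,
\end{equation*}
as described in Section~\ref{subsec:nod} via \cite[Chapter V]{serrealg}. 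The complement $\widetilde{\mathcal{J}}_z \setminus \mathcal{J}_z$ therefore consists of the boundary locus where the $\mathbb{G}_m$-coordinate degenerates ($0$ and $\infty$), whose image under the specialised theta embedding is the divisor at infinity identifying $\widetilde{\mathcal{J}}_z$ as the projective completion of $\mathcal{J}_z$ (compare \cite[\S 2]{igusaii}).

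The main obstacle is in the third step: showing that the scheme-theoretic closure of the theta embedding specialises to precisely the completion of the generalised Jacobian, and not to some degenerate or non-reduced limit. This requires verifying that a suitable multiple of $\Theta$ remains ample in the limit, and that the polynomial equations defining the closure cut out a reduced variety; both facts are established by Igusa by a detailed analysis of the theta null values and the behaviour of the Riemann theta relations at a nodal degeneration \cite[\S 3]{igusa}. The remaining work is bookkeeping to transport this local picture over $R$ back to the geometric situation over $\overline{K}(t)$, which is automatic since all constructions are étale-local at $z$.
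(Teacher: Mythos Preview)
Your proposal is a reasonable modern reconstruction of Igusa's argument, but note that the paper does not supply an independent proof of this statement at all: its entire proof reads ``See \cite[Theorem 3]{igusa}.'' This theorem sits in an appendix that explicitly recalls classical results of Igusa, so the authors treat it as a citation, not as something to be re-proved.

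Your sketch is broadly sound and compatible with that citation---you localise at $z$, take the flat closure of the theta-embedded Jacobian, and identify the smooth locus of the special fibre with the generalised Jacobian via N\'eron-model language. The one place to be careful is your third step: invoking the N\'eron mapping property to identify $\widetilde{\mathcal{J}}_z^{\mathrm{sm}}$ with the identity component of the N\'eron model presupposes that the scheme-theoretic closure $\mathcal{J}_R$ is already smooth over $R$ on an open set meeting the special fibre, which is not automatic from flatness alone and is essentially the content you then defer to Igusa's analysis of theta relations. Igusa's original 1956 argument does not use N\'eron models (which postdate it) but proceeds directly via Chow's projective embedding and specialisation of the addition law; your use of N\'eron models is a legitimate modern shortcut, but it repackages rather than avoids the hard step. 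In any case, since the paper only intends a citation here, your level of detail already exceeds what is required.
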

\begin{proof}
    See \cite[Theorem 3]{igusa}.
\end{proof}
\begin{theorem}
\label{thm:igsing}
    The singular locus of $\widetilde{\mathcal{J}}_{z}$ is $\widetilde{\mathcal{J}}_{z}\setminus \mathcal{J}_{z}$. Further, if $\omega$ is a $\overline{K}(t)$ -- rational point of $\mathcal{J}_{\overline{\eta}}$, then the specialisation $\omega_{z}$ of $\omega$ to $z$ is a smooth point of $\widetilde{\mathcal{J}}_{z}$.
\end{theorem}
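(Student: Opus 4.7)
The plan is to separate the statement into its two assertions, proving the first structurally (by analyzing the completion of the generalized Jacobian as a semi-abelian variety) and the second by a moving-lemma argument on divisors combined with the first.

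For the first assertion, I would start from the preceding Theorem~\ref{thm:igspec}, which identifies $\widetilde{\mathcal{J}}_z$ with the completion of the generalized Jacobian $\mathcal{J}_z$. Now $\mathcal{X}_z$ is a nodal curve with a single node $P_z$, so by the Rosenlicht--Serre description (\cite[Chapter V]{serrealg}), $\mathcal{J}_z$ sits in a short exact sequence $1\to \mathbb{G}_m \to \mathcal{J}_z \to \mathrm{Pic}^0(\widetilde{\mathcal{X}}_z) \to 0$ coming from the normalization $\widetilde{\mathcal{X}}_z\to \mathcal{X}_z$. Being a semi-abelian variety, $\mathcal{J}_z$ is smooth, so the singular locus of $\widetilde{\mathcal{J}}_z$ must lie in the boundary $\widetilde{\mathcal{J}}_z\setminus \mathcal{J}_z$. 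For the converse inclusion, one analyzes the local structure of the completion: the $\mathbb{G}_m$-factor in $\mathcal{J}_z$ gets compactified (in an abelian-variety-compatible manner) by inserting limits corresponding to the two branches at the node coming together, and this compactification is not smooth — concretely, the boundary divisor of the completion arising from a $\mathbb{G}_m$-degeneration is a nodal (in fact Neron-polygon-type) divisor, contributing precisely the singularities. This is made rigorous via the $\Theta$-divisor embedding of Chow (cf.\ the footnote preceding the theorem): in this embedding the closure of $\mathcal{J}_z$ acquires singularities exactly along $\widetilde{\mathcal{J}}_z\setminus \mathcal{J}_z$.

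For the second assertion, let $\omega\in \mathcal{J}_{\overline{\eta}}(\overline{K}(t))$ and represent it by a degree-zero divisor class $[D_\omega]$ on $\mathcal{X}_{\overline{\eta}}$. Extend $D_\omega$ to a relative Cartier divisor on $\mathcal{X}$ over a Zariski neighbourhood of $z$ in $\mathbb{P}^1$. A priori the specialisation $D_{\omega, z}$ on $\mathcal{X}_z$ may pass through the node $P_z$, which would obstruct landing in $\mathcal{J}_z\subset \widetilde{\mathcal{J}}_z$. However, by adding a principal divisor on $\mathcal{X}_{\overline{\eta}}$ (for which there is ample room, thanks to Riemann--Roch on the generic fibre), I can modify $D_\omega$ within its linear equivalence class so that $D_{\omega, z}$ is supported away from $P_z$. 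The modified representative then determines a divisor class on $\mathcal{X}_z\setminus\{P_z\}$, which by Rosenlicht's construction is exactly an element of $\mathcal{J}_z$. Invoking the first assertion, $\omega_z\in \mathcal{J}_z$ is a smooth point of $\widetilde{\mathcal{J}}_z$. An equivalent (and perhaps cleaner) route is to apply the N\'eron mapping property: the smooth locus of the N\'eron model of $\mathcal{J}_{\overline{\eta}}$ over a henselian neighbourhood of $z$ is the semi-abelian part, into which every rational section must land.

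The main obstacle will be the first assertion — identifying the singular locus of the completion precisely, rather than just containing it in the boundary. A naive analysis only gives one inclusion; pinning down the singularity at every boundary point requires either the explicit coordinates on the $\Theta$-embedding (following Chow's method) or the theory of toric degenerations of abelian varieties (Mumford's construction). In our setting one node yields a one-parameter $\mathbb{G}_m$-degeneration and the local model is tractable, so a direct analysis suffices; but care is needed to rule out accidental smoothness at boundary points of the compactification.
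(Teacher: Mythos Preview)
The paper does not give its own proof of this theorem; it simply cites Igusa's original result \cite[pg 746, Theorem 1]{igusaii}. So there is no ``paper's approach'' to compare against beyond that reference, and your proposal is an attempt to reconstruct the content of Igusa's argument rather than to match anything the authors wrote.

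On its merits: your outline for the second assertion is sound. The moving-lemma argument (choose a divisor representative whose closure avoids the node, so the specialisation is a genuine class in $\mathcal{J}_z$) is essentially how one would argue, and combined with the first assertion it gives the conclusion. One wording issue: you write ``the smooth locus of the N\'eron model \ldots\ is the semi-abelian part'', but the N\'eron model is smooth by definition; what you mean is that the identity component of its special fibre is $\mathcal{J}_z$, and that the N\'eron model embeds into the smooth locus of the projective specialisation $\widetilde{\mathcal{J}}_z$. That compatibility between N\'eron-model specialisation and the $\Theta$-embedding specialisation used here is not automatic and is part of what Igusa actually checks.

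For the first assertion you have one inclusion cleanly ($\mathcal{J}_z$ is semi-abelian hence smooth), and you correctly flag that the reverse inclusion --- every boundary point of the $\Theta$-completion is singular --- is the real work. Your sketch (``the $\mathbb{G}_m$-factor compactifies to a nodal boundary'') is heuristically right but is not a proof: one needs the explicit description of the closure of the image under the $\Theta$-linear system, which is exactly the computation Igusa carries out in the cited paper using Chow's coordinates. Invoking toric degenerations or Mumford's construction is anachronistic relative to Igusa and also would require checking that the resulting compactification agrees with the $\Theta$-closure used in Theorem~\ref{thm:igspec}. So as a self-contained proof this part remains incomplete; the honest move, matching the paper, is to defer to Igusa for the singular-locus identification.
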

\begin{proof}
    See \cite[pg 746, Theorem 1]{igusaii}.
\end{proof}
Now, under the natural inclusion $\overline{K}(t)\hookrightarrow \overline{K}((t-z))$, fix an embedding $\overline{K(t)}\hookrightarrow \overline{K}\langle \langle t-z \rangle \rangle$. As we saw in Section~\ref{subsec:cospec}, this completely determines a cospecialisation map $\phi_{z}:\mathcal{F}_{z}\hookrightarrow \mathcal{F}_{\overline{\eta}}$. We have the following.

\begin{theorem}
\label{thm:igtors}
    Write $\varsigma$ for the $0$ -- cycle on $\mathcal{J}_{\overline{\eta}}$ comprising of its $\ell$ -- torsion $\mathcal{J}_{\overline{\eta}}[\ell]$. Then the specialisation of $\varsigma$ to $z$ is the $0$ -- cycle on $\widetilde{\mathcal{J}}_{z}$ written $\overline{\varsigma}+\overline{\varsigma}'$ where $\overline{\varsigma}$ consists of the $\ell$ -- torsion of the generalised Jacobian $\mathcal{J}_{z}[\ell]$ and $\overline{\varsigma}'$ is a positive cycle, each of which is a multiple point of $\widetilde{\mathcal{J}}_{z}$ arising from the singularities of the curve $\prescript{(\ell)}{}{}\mathfrak{C}\subset \mathbb{P}^{M}$ over $\overline{K}$ corresponding to the $\ell$ -- division ideal $\prescript{(\ell)}{}{}\mathcal{I}_{\overline{\eta}}$ of $\mathcal{J}_{\overline{\eta}}$.
\end{theorem}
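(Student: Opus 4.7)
\medskip

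\noindent\textbf{Proof proposal.} The plan is to separate the $\ell^{2g}$ geometric points of $\varsigma$ according to whether they are rational over the Laurent series field $\overline{K}((t-z))$ (the ``unramified'' ones, which will form $\overline{\varsigma}$) or require a genuine Puiseux extension (the ``ramified'' ones, which will form $\overline{\varsigma}'$), and then use Theorem~\ref{thm:igsing} as the pivot to send each class into the correct stratum of $\widetilde{\mathcal{J}}_z$. First I would note that since $\mathcal{J}_{\overline{\eta}}\hookrightarrow \mathbb{P}^M$ is flat, the specialisation of the $0$--cycle $\varsigma$ to $z$ has degree $\ell^{2g}$. By Theorem~\ref{thm:igsing}, the specialisation $\omega_z$ of any $\overline{K}((t-z))$--rational point $\omega$ lies in the smooth locus $\mathcal{J}_z\subset \widetilde{\mathcal{J}}_z$; applying this to each unramified $\ell$--torsion section yields a set of points of $\mathcal{J}_z$ which (since specialisation respects the group law on the smooth locus) is a subgroup of $\mathcal{J}_z[\ell]$.

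Next I would argue the induced map from the unramified part of $\varsigma$ to $\mathcal{J}_z[\ell]$ is a bijection. Injectivity comes from the fact that distinct unramified $\ell$--torsion sections have distinct Laurent tails at $z$ (Hensel's lemma applied to the \'etale multiplication-by-$\ell$ map on the smooth locus, using $\ell$ coprime to the residue characteristic). Surjectivity follows by lifting: any $P\in \mathcal{J}_z[\ell]\subset \mathcal{J}_z$ sits in the \'etale locus of $[\ell]$ and admits a unique Hensel lift to an $\ell$--torsion section over $\mathrm{Spec}\,\overline{K}[[t-z]]$, producing an unramified member of $\varsigma$ that specialises to $P$. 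Since $\#\mathcal{J}_z[\ell]=\ell^{2g-1}$ (Section~\ref{subsec:nod}), exactly $\ell^{2g-1}$ of the $\ell^{2g}$ geometric points of $\varsigma$ are unramified, and their specialisations comprise $\overline{\varsigma}$.

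The remaining $\ell^{2g-1}(\ell-1)$ points of $\varsigma$ are genuinely ramified at $z$. They cannot specialise into $\mathcal{J}_z$: any such specialisation would be smooth by Theorem~\ref{thm:igsing}, hence (by the Hensel argument above) would force the section itself to be rational over $\overline{K}((t-z))$, contradicting ramification. By Theorem~\ref{thm:igsing} the complement of $\mathcal{J}_z$ in $\widetilde{\mathcal{J}}_z$ is exactly the singular locus, so these specialisations land in the multiple points of $\widetilde{\mathcal{J}}_z$; this defines the positive cycle $\overline{\varsigma}'$ and accounts for the full degree $\ell^{2g}$.

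It remains to identify $\overline{\varsigma}'$ with the singularities of the curve $\prescript{(\ell)}{}{}\mathfrak{C}$ cut out by $\prescript{(\ell)}{}{}\mathcal{I}_{\overline{\eta}}$ inside $\mathbb{P}^M\times \mathbb{P}^1$: the generic fibre of $\prescript{(\ell)}{}{}\mathfrak{C}\to \mathbb{P}^1$ is the reduced $\ell^{2g}$--point scheme $\mathcal{J}_{\overline{\eta}}[\ell]$, whereas at $z$, the sheets indexed by the Galois orbits under a Puiseux substitution $(t-z)^{1/N}\mapsto \zeta_N(t-z)^{1/N}$ collide, producing singular points of $\prescript{(\ell)}{}{}\mathfrak{C}$ situated over the multiple points of $\widetilde{\mathcal{J}}_z$. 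Matching sheet-collision multiplicities on the curve side with the local intersection multiplicities defining $\overline{\varsigma}'$ on $\widetilde{\mathcal{J}}_z$ gives the desired identification. The main obstacle I expect is precisely this last step: tracking the ramification indices of each Puiseux branch and verifying that they agree with the schematic multiplicity of each multiple point of the completed Jacobian, for which the explicit local description of $\widetilde{\mathcal{J}}_z$ near its singularities (as a $(\mathbb{G}_m/\mathbb{G}_a)$--extension of $\mathrm{Pic}^0(\widetilde{\mathcal{X}}_z)$, per \cite{serrealg}) will have to be invoked.
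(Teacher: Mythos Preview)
The paper does not prove this theorem at all: its entire proof is the line ``See \cite[Theorem 2]{igusaii}.'' This is a recalled result of Igusa, included in the appendix for reference. So there is no ``paper's approach'' to compare against; you have instead attempted to reconstruct Igusa's argument from first principles.

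Your reconstruction is broadly sound and follows what one would expect the original argument to look like: split $\varsigma$ into the $\ell^{2g-1}$ sections unramified at $z$ (which specialise into the smooth locus $\mathcal{J}_z$ and biject with $\mathcal{J}_z[\ell]$ by Hensel lifting along the \'etale map $[\ell]$) and the remaining $\ell^{2g-1}(\ell-1)$ ramified ones (whose Galois orbits collapse onto the singular stratum $\widetilde{\mathcal{J}}_z\setminus \mathcal{J}_z$). Two points of care are worth flagging. First, you invoke Theorem~\ref{thm:igsing} for $\overline{K}((t-z))$--rational points, whereas the paper states it only for $\overline{K}(t)$--rational points; the extension is harmless since Igusa's argument is local, but strictly speaking you are using more than what the paper records. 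Second, the step you yourself identify as the obstacle --- matching the Puiseux ramification indices to the schematic multiplicities in $\overline{\varsigma}'$ and to the singularities of $\prescript{(\ell)}{}{}\mathfrak{C}$ --- is genuinely the crux, and your sketch does not complete it; this is precisely what Igusa's paper supplies, so for a self-contained proof you would need to carry out that local computation rather than gesture at it.
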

\begin{proof}
    See \cite[Theorem 2]{igusaii}.
\end{proof}
\begin{theorem}
\label{thm:igg}
    Let $\gamma\in \mathcal{F}_{\overline{\eta}}\setminus \phi_{z}(\mathcal{F}_{z})$. Then $\sigma_{z}(\gamma)$ and $\gamma$ specialise to the same point in $\widetilde{\mathcal{J}}_{z}$. Further, $\sigma_{z}(\gamma)-\gamma$ lies in the space generated by the vanishing cycle at $z$.
\end{theorem}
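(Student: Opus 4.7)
The plan is to treat the two assertions in turn, by a Puiseux-expansion computation for the first and by a direct appeal to the Picard-Lefschetz formula for the second.

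For the first assertion, I would represent $\gamma\in \mathcal{F}_{\overline{\eta}}$ via Algorithm~\ref{algo:cospec} as a tuple $[X_0^{(\gamma)}(t):\ldots:X_M^{(\gamma)}(t)]$ of Puiseux series in $t-z$, convergent on a punctured disc around $z$ with respect to the chosen cospecialisation $\phi_z$. By Theorem~\ref{thm:igspec}, the specialisation of $\gamma$ at $z$ is a point of the completion $\widetilde{\mathcal{J}}_z$ of the generalised Jacobian, computed explicitly as the limit of the tuple as $t\to z$, i.e., by the constant terms $\alpha^{(\gamma)}_{i,0}$ of each coordinate. The canonical generator $\sigma_z$ of tame inertia acts on Puiseux series by $(t-z)^{j/n}\mapsto \zeta_n^j(t-z)^{j/n}$, scaling every nonconstant term by a nontrivial root of unity and fixing the constant term. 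Therefore the Puiseux tuples representing $\gamma$ and $\sigma_z(\gamma)$ share the same constant term in every coordinate, so $\gamma$ and $\sigma_z(\gamma)$ specialise to the same point of $\widetilde{\mathcal{J}}_z$.

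For the second assertion, I would invoke the Picard-Lefschetz formula (\ref{eqn:piclef}) directly: for any $\gamma'\in \mathcal{F}_{\overline{\eta}}$,
$$
\sigma_z(\gamma') - \gamma' \;=\; -\epsilon_z\,\langle \gamma', \delta_z\rangle\,\delta_z \;\in\; \langle \delta_z\rangle,
$$
which is a scalar multiple of $\delta_z$ and hence lies in the one-dimensional subspace generated by the vanishing cycle at $z$.

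The principal subtlety lies in justifying that the scheme-theoretic specialisation of $\gamma\in \mathcal{J}_{\overline{\eta}}[\ell]$ at $z$ is indeed what is computed by the Puiseux limit, and in reconciling this with the fact that the specialisation of $\gamma$ (for $\gamma \notin \phi_z(\mathcal{F}_z)$) lands in the singular locus of $\widetilde{\mathcal{J}}_z$ where subtraction is ill-defined. This is handled by combining Theorem~\ref{thm:igspec} (identifying $\widetilde{\mathcal{J}}_z$ as the completion of the generalised Jacobian) with Theorem~\ref{thm:igtors} (describing how the $\ell$-torsion $0$-cycle specialises, with the extra cycle $\overline{\varsigma}'$ landing precisely on the singular locus). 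Once one observes that the tuple $[X_i^{(\gamma)}(t)]$ is a $\overline{K(t)}$-rational point of the fibre system of Jacobians whose defining equations specialise to those of $\widetilde{\mathcal{J}}_z$, equality of constant terms under $\sigma_z$ forces equality of specialisation points as a purely scheme-theoretic statement, with no group law required; the second assertion is then immediate from Picard-Lefschetz, completing the plan.
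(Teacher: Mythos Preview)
Your proposal is correct, but it differs from the paper's treatment, which is simply a citation: the paper's entire proof of this theorem is ``See the proof of \cite[Theorem 3]{igusaii}.'' So any explicit argument is already a departure.

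Your first-assertion argument is a genuine direct proof: representing $\gamma$ by Puiseux coordinates and observing that $\sigma_z$ multiplies the $(t-z)^{j/n}$-term by $\zeta_n^{j}$, hence fixes the $j=0$ terms, is valid. One small point you gloss over is that the specialisation in $\mathbb{P}^M$ is determined by the lowest-order terms after normalising the minimum valuation to zero, not literally by ``constant terms''; but since $\sigma_z$ preserves the valuation of every coordinate and fixes the coefficient in degree zero after such normalisation, the conclusion stands. Your acknowledgement that the Puiseux limit computes the scheme-theoretic specialisation (via properness of $\mathcal{J}_{\overline{\eta}}$ and Theorems~\ref{thm:igspec}, \ref{thm:igtors}) is the right justification.

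For the second assertion, invoking the Picard--Lefschetz formula~(\ref{eqn:piclef}) is logically sound within the paper's framework, since that formula is taken as input from standard references in Section~\ref{subsec:etsurf}. However, this reverses the historical and conceptual flow: Igusa's results predate the SGA formulation of Picard--Lefschetz and are recorded in Appendix~\ref{app:igusa} precisely as an \emph{independent} foundation (the paper itself cites ``the Picard--Lefschetz formulas, \emph{or} Section~\ref{app:igusa}, Theorem~\ref{thm:igg}'' in the proof of Lemma~\ref{lem:piclef}). Igusa's own argument proceeds directly from the fibre system of Jacobians and the structure of the specialisation map, without assuming Picard--Lefschetz. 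Your deduction of the weaker statement from the stronger one is valid but makes the second clause of the theorem redundant with Section~\ref{subsec:etsurf}; the appendix's purpose of providing an independent derivation is lost.
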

\begin{proof}
    See the proof of \cite[Theorem 3]{igusaii}.
\end{proof}

\begin{theorem}
\label{thm:igred}
    Now, consider $\mathcal{J}_{\overline{\eta}}$ as being defined over $\overline{K}((t-z))$. Then, all the points of $\phi_{z}(\mathcal{F}_{z})$ are rational over $\overline{K}((t-z))$ and the splitting field $\mathbb{K}$ of $\mathcal{F}_{\overline{\eta}}$ over $\overline{K}((t-z))$ satisfies
    $$
    [\mathbb{K}: \overline{K}((t-z))]=\ell,
    $$
    i.e., $\mathbb{K}$ is the field obtained by adjoining $\overline{K}((t-z))$ with an $\ell^{\mathrm{th}}$ -- root of $t-z$.
\end{theorem}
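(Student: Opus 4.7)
The plan is to identify the splitting field $\mathbb{K}$ via Galois theory, using the Picard--Lefschetz formula to make the tame inertia action on $\mathcal{F}_{\overline{\eta}}$ explicit. Since the residue field $\overline{K}$ at $z$ is already algebraically closed of characteristic zero, the absolute Galois group of $\overline{K}((t-z))$ coincides with the tame inertia $I^{\mathfrak{t}}_z$, and is topologically generated (as a quotient) by the canonical generator $\sigma_z$ sending $(t-z)^{1/n} \mapsto \zeta_n (t-z)^{1/n}$. Thus the splitting field of any finite Galois module $M$ equipped with an $I^{\mathfrak{t}}_z$--action corresponds to the kernel of $I^{\mathfrak{t}}_z \to \mathrm{Aut}(M)$, and the points of $M$ rational over $\overline{K}((t-z))$ are exactly the invariants $M^{I^{\mathfrak{t}}_z}$.

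First I would prove the rationality claim. By the general theory of cospecialisation, the image $\phi_z(\mathcal{F}_z)$ coincides with the inertia-invariants $\mathcal{F}_{\overline{\eta}}^{I^{\mathfrak{t}}_z}$ (see e.g.\ \cite[\S 20]{milnelec}). The above identification of the Galois group then shows at once that these invariants are rational over $\overline{K}((t-z))$. Alternatively, one reads this from the Picard--Lefschetz formula (\ref{eqn:simpiclef}): an element $\gamma\in \mathcal{F}_{\overline{\eta}}$ is fixed by $\sigma_z$ iff $\langle \gamma,\delta_z\rangle = 0$, i.e.\ iff $\gamma\in \delta_z^{\perp}$, and this perpendicular is precisely $\phi_z(\mathcal{F}_z)$ in view of the Picard--Lefschetz description of the monodromy.

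Next I would compute the order of the image of $\sigma_z$ acting on $\mathcal{F}_{\overline{\eta}}$. Iterating (\ref{eqn:simpiclef}) gives
\begin{equation*}
\sigma_z^{j}(\gamma)=\gamma - j\cdot\langle\gamma,\delta_z\rangle\,\delta_z
\end{equation*}
for every $\gamma\in\mathcal{F}_{\overline{\eta}}$. Since $\delta_z\neq 0$ and the Weil pairing is nondegenerate on $\mathcal{F}_{\overline{\eta}}$, there exists $\gamma_0$ with $\langle \gamma_0,\delta_z\rangle \in (\mathbb{Z}/\ell\mathbb{Z})^*$, and for such $\gamma_0$ the action $\sigma_z^{j}(\gamma_0)=\gamma_0$ holds if and only if $\ell\mid j$. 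Hence the image of $\sigma_z$ is a cyclic subgroup of order exactly $\ell$ in $\mathrm{Aut}(\mathcal{F}_{\overline{\eta}})$, so the kernel of $I^{\mathfrak{t}}_z \to \mathrm{Aut}(\mathcal{F}_{\overline{\eta}})$ has index $\ell$, giving $[\mathbb{K}:\overline{K}((t-z))] = \ell$.

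Finally I would match the kernel with the correct subfield. The Picard--Lefschetz transvection associated to a general $\tau\in I^{\mathfrak{t}}_z$ is weighted by the character $\tau \mapsto \tau((t-z)^{1/\ell})/(t-z)^{1/\ell}\in \mu_{\ell}$ (after discrete logarithm with respect to $\zeta_\ell$). Therefore $\tau$ acts trivially on $\mathcal{F}_{\overline{\eta}}$ if and only if $\tau$ fixes $(t-z)^{1/\ell}$, and the kernel of the action is exactly $\mathrm{Gal}\bigl(\overline{\overline{K}((t-z))}/\overline{K}((t-z))[(t-z)^{1/\ell}]\bigr)$. The fundamental theorem of Galois theory now identifies $\mathbb{K} = \overline{K}((t-z))[(t-z)^{1/\ell}]$. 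The only subtle point — and where I would be most careful — is verifying that the weighting character appearing in Picard--Lefschetz is really this tautological $\mu_{\ell}$--character on the tame inertia, but this is built into the normalisation of $\sigma_z$ chosen after (\ref{eqn:piclef}), and matches the conventions of \cite[\S 3]{milne80}.
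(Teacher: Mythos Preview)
Your proof is correct and takes a different route from the paper's. The paper simply cites Igusa's original result \cite[Theorem~2]{igusaabs}, whose argument proceeds by a direct analysis of the specialisation of torsion on the fibre system of Jacobians and predates the SGA~7 formalism. You instead argue internally to the Picard--Lefschetz setup already established in Section~\ref{subsec:etsurf}: identifying $\mathrm{Gal}\bigl(\overline{\overline{K}((t-z))}/\overline{K}((t-z))\bigr)$ with the tame inertia $I^{\mathfrak{t}}_z$, recognising $\phi_z(\mathcal{F}_z)$ as the inertia invariants, and reading off the order of the transvection $\sigma_z$ directly from (\ref{eqn:simpiclef}) together with nondegeneracy of the Weil pairing and $\delta_z\neq 0$. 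Your approach is more economical within the paper's own logical flow, since the Picard--Lefschetz formula is already imported from \cite{milne80} and nothing further is needed; Igusa's approach, by contrast, is independent of that formalism and could in principle be used to \emph{derive} the local monodromy description rather than assume it. The one point you flag as delicate---that the weighting character in the transvection is the tautological $\mu_\ell$--valued tame character---is exactly the content of the normalisation around (\ref{eqn:piclef}), so no gap arises.
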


\begin{proof}
    See \cite[Theorem 2]{igusaabs}.
\end{proof}

\section{Abstract Abel map and embeddings of Jacobians}
\label{app:jac}
 This section of the appendix aims to provide equations for the Jacobian of smooth projective curves and the generalised Jacobian of a nodal curve. A construction of the Jacobian of a smooth curve was described by Chow \cite{chow}; however, our treatment follows Anderson \cite{anderson}, who provides an `elementary' algebraic construction of the Abel map \cite{andersonabel}. In \cite{andersontheta}, it is shown that the construction matches with an `edited' $4\cdot \Theta$ -- embedding associated to the $\Theta$ -- divisor on the Jacobian of a curve. \\ 
We explain briefly Anderson's construction of the `abstract Abel map'. Let $C\subset \mathbb{P}^{N}$ be a smooth, projective curve of genus $g$ over a field $\mathbb{K}$. Let $\mathcal{E}$ be a line bundle of degree $w\geq 2g+1$ and let $\mathcal{D}$ be a line bundle of degree zero. Let $\underline{u}$ be a basis for $\mathrm{H}^{0}(C, \mathcal{D}^{-1}\otimes \mathcal{E})$ and let $\underline{v}$ be a basis for $\mathrm{H}^{0}(C, \mathcal{D}\otimes \mathcal{E})$. Denote by $C^{\{0,\ldots , w+1\}}$ the $w+2$ -- fold power of $C$ with numbering remembered, and for a section $f$ of a line bundle on $C$, denote by $f^{(i)}$ the pullback by the $i^{\mathrm{th}}$ projection. Then the abstract Abel map sends $\mathcal{D}$ to the $w\times w$ matrix with entries
\begin{equation}
\label{eqn:abelmap}
\mathrm{abel}(\mathcal{D})_{ij}= \begin{vmatrix} 
    \widehat{\underline{v}^{(0)}} \\
    \vdots  \\
    \widehat{\underline{v}^{(i)}} \\
    \vdots 
\end{vmatrix}
    \cdot
    \begin{vmatrix} 
    \vdots  \\
    \widehat{\underline{u}^{(i)}} \\
    \vdots \\
    \widehat{\underline{u}^{(w+1)}}
    \end{vmatrix}
\cdot 
\begin{vmatrix} 
    \vdots  \\
    \widehat{\underline{v}^{(j)}} \\
    \vdots \\
    \widehat{\underline{v}^{(w+1)}}
     \end{vmatrix}
    \cdot
    \begin{vmatrix} 
    \widehat{\underline{u}^{(0)}} \\
    \vdots  \\
    \widehat{\underline{u}^{(j)}} \\
    \vdots 
\end{vmatrix}
\end{equation}
for $1\leq i, j\leq w$, where the leftmost term in the product denotes the determinant of the $w\times w$ matrix obtained by stacking the $\underline{v}^{(t)}$ as row vectors numbered $0$ to $w+1$ and removing the rows numbered $0$ and $i$. In particular, the construction maps classes of degree zero line bundles to $w\times w$ matrices with the entry from the $i^{\mathrm{th}}$ row and $j^{\mathrm{th}}$ column being from the space
$$
\mathrm{H}^{0}\left(C^{\{0, \ldots , w+1\}} , \frac{\bigotimes_{s=0}^{w+1}\left(\mathcal{E}^{(s)}\right)^{\otimes 4}}{\left(\mathcal{E}^{0}\right)^{\otimes 2}\otimes \left(\mathcal{E}^{(i)}\right)^{\otimes 2}\otimes \left(\mathcal{E}^{(j)}\right)^{\otimes 2}\otimes \left(\mathcal{E}^{(w+1)}\right)^{\otimes 2}}\right).
$$
In summary, the abstract Abel map gives a way to realise any degree zero divisor on $C$ as a point on its Jacobian, embedded into projective space.
\\ \\ We now sketch below how to obtain the equations for the Jacobian, i.e., the ideal of polynomials vanishing on the image of the abstract Abel map.
 
 \begin{itemize}
     \item[(1)] Fix an effective divisor $E$ of $C$ with $\deg(E)\geq 2g+1$.
     \item[(2)] Set $w=\dim \mathcal{L}(E)=\deg(E)-g+1$.
     \item[(3)] Write $S=\mathrm{supp}(E)$, $A=\mathrm{H}^{0}(S, \mathcal{O}_{C})$ and $L=\mathcal{L}(2E)$.
 \end{itemize}
Then, the Jacobian of $C$ is given by the projective algebraic variety $J$ of $\mathbb{K}$ -- proportionality classes of Jacobi matrices of type $(\mathbb{K}, w, A, L)$. A proof is given in \cite[Theorem 4.4.6]{anderson}. From \cite[3.7.3]{anderson}, we see that the complexity of the construction is at worst $\exp(\mathrm{poly}(g))$. \\ \\
In the case $\mathbb{K}=k(t)$ is the function field of the projective line, and $C$ is a curve over $\mathbb{K}$, we want to choose an effective divisor $E$ on $C$ for the embedding so that upon specialisation to a smooth value $t=u$, the corresponding embedding of the Jacobian of $C_u$ is given by $E_u$. This is achieved as follows.
\begin{itemize}
    \item Choose an effective divisor $E$ of $C$ of degree $\geq 2g+1$ via taking all the zeros of a rational function $\lambda$ on $C$, with $k(t)$ -- coefficients. We may assume $\mathrm{div}(\lambda)=\lambda_{+} - \lambda_{-}$, with $\lambda_+$ and $\lambda_-$ effective of degree $\geq 2g+1$, and no redundancies between them. Also assume that the divisor $\mathcal{E}$ specialised to any $u\in \mathbb{P}^{1}$ contains no singular point of $\mathcal{X}_{u}$ in its support.

\item For a smooth point $u$, the associated divisor $E_u$ is obtained by specialising $\lambda_+$ to $u$. 

\item The Jacobian of the curve $C_u$ corresponds to the specialisation of the Jacobian of $C$ at $t=u$, via the divisor $E_u$. 
    
\end{itemize}

 \begin{algorithm}
      \caption{\texttt{Abstract Abel map and its inverse on} $\ell$ -- \texttt{torsion} }
     \label{algo:abel}
\begin{itemize}
    \item \textbf{Input:} The generic fibre $\mathcal{X}_{\overline{\eta}}$ of a Lefschetz pencil $\pi:\mathcal{X}\rightarrow \mathbb{P}^{1}$ on a smooth projective surface $\mathcal{X}$ over a number field $K$, and a degree zero divisor $D\in \mathrm{Pic}^{0}(\mathcal{X}_{\overline{\eta}})[\ell]$ represented using Theorem~\ref{thm:huaier}.

\item \textbf{Output:} The image $\mathrm{abel}(D)$ of the map in (\ref{eqn:abelmap}) as a point in projective space $\mathbb{P}^{M}$ lying on the Jacobian $\mathcal{J}_{\overline{\eta}}$, satisfying the conditions of the paragraph above.
    
\end{itemize}
     \begin{algorithmic}[1]
     \STATE  Choose an effective divisor ${E}$ of $\mathcal{X}_{\overline{\eta}}$ of degree $w\geq 2g+1$ via taking all the zeros of a rational function $\lambda$, with $K(t)$ -- coefficients on $\mathcal{X}_{\overline{\eta}}$. We may assume $\mathrm{div}(\lambda)=\lambda_{+} - \lambda_{-}$, with $\lambda_+$ and $\lambda_-$ effective of degree $\geq 2g+1$, and no redundancies between them. Also assume that the divisor ${E}$ specialised to any $u\in \mathbb{P}^{1}$ contains no singular point of $\mathcal{X}_{u}$ in its support.

     \STATE Compute bases $\underline{v}$ for $\mathrm{H}^{0}(\mathcal{X}_{\overline{\eta}}, E+D)$ and $\underline{u}$ for $\mathrm{H}^{0}(\mathcal{X}_{\overline{\eta}}, E-D)$ using an effective Riemann-Roch algorithm via Theorem~\ref{thm:jacarith}.

     \STATE Maintaining $w+2$ sets of variables, compute the pullbacks $\underline{u}^{(i)}$ and $\underline{v}^{(j)}$ for each $i,j\in \{0, \ldots, w+1\}$. These are merely the same rational functions associated to a specific set of variables.

     \STATE Compute the map (\ref{eqn:abelmap}) using these pullbacks.

      \STATE For any $u\in \mathbb{P}^{1}$, the embedding of the Jacobian $\mathrm{Pic}^{0}(\mathcal{X}_{u})\hookrightarrow \mathbb{P}^{M}$ is given by the divisor $E_u$. If we specialise the input divisor $D$ to $u$, we get $D_u\in \mathrm{Pic}^{0}(\mathcal{X}_{u})[\ell]$. 
      
      \STATE To invert the Abel map on $\mathrm{Pic}^{0}(\mathcal{X}_{u})[\ell]$, given a point in $\mathbb{P}^{M}$ corresponding to an element of $\mathrm{Pic}^{0}(\mathcal{X}_{u})[\ell]$, we simply go through all the $\ell^{2g}$ divisorial representatives of $\ell$ -- torsion as a result of the algorithm from Theorem~\ref{thm:huaier} and check which of them map to our given point via the divisor $E_u$ and the map (\ref{eqn:abelmap}). There will be a unique pre-image as the Abel map is injective.

     \end{algorithmic}
 \end{algorithm}

\begin{remark}
    The only dependence on $\ell$ in Algorithm~\ref{algo:abel} is the input divisor $D\in \mathrm{Pic}^{0}(\mathcal{X}_{\overline{\eta}})[\ell]$. By Theorem~\ref{thm:huaier}, we know that $D$ can be efficiently represented $\mathrm{poly}(\ell)$ time and the bases for the Riemann-Roch spaces $\mathrm{H}^{0}(\mathcal{X}_{\overline{\eta}}, E\pm D)$ are computed using Theorem~\ref{thm:jacarith}.
\end{remark}

 By \cite[Theorem 3]{igusa} (see also \cite{igusaii}), we know that the specialisation of the Jacobian of the generic fibre $\mathcal{X}_{\overline{\eta}}$ of a Lefschetz pencil $\pi:\mathcal{X}\rightarrow \mathbb{P}^{1}$ on a surface $\mathcal{X}$ to a singular $z\in \mathcal{Z}$ is the completion of the generalised Jacobian of $\mathcal{X}_{z}$. In summary, we have the following.
\begin{theorem}
\label{thm:eqnjac}
Let $\mathcal{X}\subset \mathbb{P}^{N}$ be a nice surface of degree $D$ over a number field $K$ and let $\pi:\mathcal{X}\rightarrow \mathbb{P}^{1}$ be a Lefschetz pencil of hyperplane sections on $\mathcal{X}$. Let $U\subset \mathbb{P}^{1}$ be the subscheme parametrising the smooth fibres and let $Z=\mathbb{P}^{1}\setminus U$ parametrise the singular nodal fibres. Then, there exists an algorithm that computes 
\begin{itemize}
    \item [(i)] the Jacobian $\mathcal{J}_{\overline{\eta}}$ of $\mathcal{X}_{\overline{\eta}}$ in a projective space $\mathbb{P}^{M}$ as a system of homogeneous polynomial equations,

    \item[(ii)] an explicitisation of the Abel map $\mathcal{X}_{\overline{\eta}}\hookrightarrow \mathcal{J}_{\overline{\eta}}$,

    \item[(iii)] an explicit addition law on the Jacobian $\mathcal{J}_{\overline{\eta}}$ with atlases, in the sense of Pila \cite{pila}. This provides a translation between the language of divisor arithmetic on $\mathcal{X}_{\overline{\eta}}$ and points on $\mathcal{J}_{\overline{\eta}}$. Moreover, for any specialisation to $u\in \mathbb{P}^{1}$, the group law on $\mathcal{J}_{\overline{\eta}}$ specialises to that on $\mathcal{J}_{u}$.

   % \item[(iii)] an $\ell$ -- division polynomial system (zero-dimensional over $k(t)$) whose roots are the $\ell$ -- torsion points of $J_{\eta}$. Moreover, on specialisation to a smooth point $u\in \mathcal{U}$, one obtains the $\ell$ -- division system of $J_{u}$. On specialisation to a singular point $z\in \mathcal{Z}$, one obtains the $\ell$ -- division system whose roots are the $\ell$ -- torsion points in the generalised Jacobian of $X_{z}$, each occurring with multiplicity $\ell$. The complexity of this subroutine is polynomial-time in $\ell$.
    
\end{itemize}
\end{theorem}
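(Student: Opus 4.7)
The plan is to assemble the three items from the tools developed in Appendix~\ref{app:jac}, with Anderson's abstract Abel map as the central construction. First, I would make the global choice of the effective divisor $E$ once and for all: pick a rational function $\lambda \in K(\mathcal{X}_{\overline{\eta}})$ with $\operatorname{div}(\lambda) = \lambda_+ - \lambda_-$, where both $\lambda_\pm$ are effective of degree $\geq 2g+1$ with disjoint supports, and with the additional property that for every $u \in \mathbb{P}^1$ (including the singular values $z \in \mathcal{Z}$) the specialisation of the support of $E := \lambda_+$ avoids the node of $\mathcal{X}_u$. Such a $\lambda$ exists because avoidance cuts out a nonempty open condition on sections of a sufficiently ample line bundle, and a witness can be produced by linear algebra over $K(t)$.

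Next, for item (i), I would run Anderson's construction exactly as in the paragraph preceding Algorithm~\ref{algo:abel}: compute bases $\underline{u}$, $\underline{v}$ of the Riemann--Roch spaces $\mathrm{H}^0(\mathcal{X}_{\overline{\eta}}, E-D)$ and $\mathrm{H}^0(\mathcal{X}_{\overline{\eta}}, E+D)$ using Theorem~\ref{thm:jacarith}, and form the $w \times w$ matrix (\ref{eqn:abelmap}) over $(\mathcal{X}_{\overline{\eta}})^{\{0,\ldots,w+1\}}$. The projective coordinates of $\operatorname{abel}(\mathcal{D})$ span the image of $\mathcal{J}_{\overline{\eta}}$ inside $\mathbb{P}^M$; the defining ideal is then obtained by elimination on a generic divisor $D$ parametrised symbolically over $K(t)$, using that Anderson's construction is proved in \cite[Theorem 4.4.6]{anderson} to match the (edited) $4\Theta$-embedding and hence yields a projectively normal model with computable equations. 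For item (ii), the Abel map $\mathcal{X}_{\overline{\eta}} \hookrightarrow \mathcal{J}_{\overline{\eta}}$ is the restriction of $\operatorname{abel}(\cdot)$ to the locus of degree-zero divisors of the form $P - P_0$ for a fixed base point $P_0$; this specialises to the classical Abel--Jacobi embedding on every smooth fibre.

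For item (iii), I would produce the addition law following Pila's framework by expressing, for pairs $(D_1, D_2)$ of degree-zero divisors on $\mathcal{X}_{\overline{\eta}}$, the coordinates of $\operatorname{abel}(D_1 + D_2)$ as rational functions of the coordinates of $\operatorname{abel}(D_1)$ and $\operatorname{abel}(D_2)$, with an atlas of local charts covering the Jacobian; the existence of such rational expressions is the content of the abelian variety structure, and their explicit form follows algorithmically from the interpolation set-up of Anderson--Pila combined with effective Riemann--Roch on $\mathcal{X}_{\overline{\eta}}$. Compatibility with specialisation is then automatic: for smooth $u \in \mathcal{U}$, the chosen $E$ specialises to a divisor $E_u$ of the same degree on the smooth curve $\mathcal{X}_u$, so Anderson's construction commutes with base change $K(t) \to K(u)$; for singular $z \in \mathcal{Z}$, the support of $E_z$ avoids the node, so the Riemann--Roch spaces $\mathrm{H}^0(\mathcal{X}_z, E_z \pm D_z)$ remain well-behaved on the generalised Jacobian $\mathcal{J}_z$, and by Theorems~\ref{thm:igspec} and \ref{thm:igsing} (Igusa) the specialised embedding yields the completion $\widetilde{\mathcal{J}}_z$ of $\mathcal{J}_z$, with $\overline{K}(t)$-rational points specialising into the smooth locus.

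The main obstacle is arranging the global avoidance condition for $E$ simultaneously over all singular values $z \in \mathcal{Z}$ and verifying that the specialisation of Anderson's matrix is well-defined at these values of $t$; this is what forces the careful choice of $\lambda_+$ in the first step and is precisely the technical point handled by Igusa's theorems quoted in Appendix~\ref{app:igusa}. Everything else reduces to bookkeeping on Riemann--Roch spaces and to elimination over $K(t)$, both of which are effective in the required complexity.
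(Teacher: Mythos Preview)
Your proposal is sound and aligned with the paper's approach: the paper's own proof is nothing more than the single citation ``See \cite[\S 4]{anderson},'' so everything you have written is an expansion of what Anderson's construction actually does, together with the specialisation discussion (via Igusa's Theorems~\ref{thm:igspec} and~\ref{thm:igsing}) that the paper relegates to the surrounding text of Appendix~\ref{app:jac} rather than to the proof itself.

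One technical point to tighten: your description of how to obtain the defining equations of $\mathcal{J}_{\overline{\eta}}$ in item~(i) --- ``the defining ideal is then obtained by elimination on a generic divisor $D$ parametrised symbolically over $K(t)$'' --- is not Anderson's actual method. In \cite[\S 3.7, Theorem 4.4.6]{anderson} the Jacobian is cut out intrinsically as the variety of proportionality classes of \emph{Jacobi matrices} of type $(\mathbb{K}, w, A, L)$ (with $A = \mathrm{H}^{0}(S, \mathcal{O}_C)$ and $L = \mathcal{L}(2E)$ as in the paper's sketch), and the equations come from the algebraic conditions defining that matrix class, not from eliminating a symbolic $D$. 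Your elimination approach could be made to work in principle, but it is not what the cited reference does, and you would have to argue separately that the resulting ideal is the full ideal of the image rather than some thickening. Also, the identification with the edited $4\Theta$-embedding is in \cite{andersontheta} (the paper's Theorem~\ref{thm:edited}), not in \cite{anderson} itself.
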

\begin{proof}
    See \cite[\S 4]{anderson}.
\end{proof}
\begin{theorem}
\label{thm:edited}
    The embedding described in Theorem~\ref{thm:eqnjac} factors through (and corresponds exactly to, upto linear hull) an `edited' $4\cdot \Theta$ -- embedding, i.e., the complete linear system associated to the divisor $4\cdot \Theta$ on the Jacobian, consisting of those theta-functions which vanish at the origin with order $\leq 1$.
\end{theorem}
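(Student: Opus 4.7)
The plan is to unpack Anderson's construction \cite{anderson, andersontheta} and match it to the classical $4\Theta$-embedding step by step. First, one verifies that the entries of the matrix $\mathrm{abel}(\mathcal{D})_{ij}$ in (\ref{eqn:abelmap}) give a well-defined morphism from $\mathcal{J}_{\overline{\eta}}$ to a projective space, independent of the choice of bases $\underline{u}, \underline{v}$. Since rescaling each basis by a scalar changes each of the four determinants in (\ref{eqn:abelmap}) by a corresponding scalar, and the four factors are arranged so that the two $\underline{u}$-factors and the two $\underline{v}$-factors produce the same overall scalar in every entry, the induced point of $\mathbb{P}^{w^2-1}$ depends only on $\mathcal{D}$ (with $\mathcal{E}$ fixed). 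This gives a morphism $\phi:\mathcal{J}_{\overline{\eta}}\to \mathbb{P}^{w^2-1}$.

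The next step is to identify the line bundle on $\mathcal{J}_{\overline{\eta}}$ whose sections furnish the matrix entries. Each entry is a priori a section of a line bundle on $C^{\{0,\ldots,w+1\}}$; I would pull back along the Abel--sum map $C^{\{0,\ldots,w+1\}}\to \mathcal{J}_{\overline{\eta}}$, using the see-saw principle to descend to $\mathcal{J}_{\overline{\eta}}$, and track the tensor contributions of each of the four Wronskian-type determinants. Each of these four determinants is (up to a pullback from $C^{\{0,\ldots,w+1\}}$) a translate of a classical theta function, and so contributes one copy of $\mathcal{O}(\Theta)$. The product therefore lies in $H^0(\mathcal{J}_{\overline{\eta}}, \mathcal{O}(4\Theta))$, and $\phi$ factors as $\mathcal{J}_{\overline{\eta}}\xrightarrow{|4\Theta|}\mathbb{P}(H^0(\mathcal{J}_{\overline{\eta}}, 4\Theta))^{\vee}\to \mathbb{P}^{w^2-1}$. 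The linear hull of $\phi(\mathcal{J}_{\overline{\eta}})$ thus determines a subspace $W\subset H^0(\mathcal{J}_{\overline{\eta}}, 4\Theta)$.

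The final and most delicate step is to identify $W$ with the edited subsystem, i.e.\ sections vanishing at the origin $0_{\mathcal{J}}$ to order at most one. Setting $\mathcal{D}=\mathcal{O}_{C}$ so that $\underline{u}$ and $\underline{v}$ can both be chosen as the same basis of $H^0(C,\mathcal{E})$, each matrix entry splits into two matching pairs of determinants, and a direct inspection shows that the entry either is nonzero at $0_{\mathcal{J}}$ or vanishes simply (and never to order $\geq 2$), depending on the index configuration $(i,j)$. One then performs a tangent computation at $0_{\mathcal{J}}$: using the identification of $T_{0}\mathcal{J}_{\overline{\eta}}\simeq H^1(C,\mathcal{O}_{C})$ and Serre duality, the first-order variations of the determinants are shown to span the whole cotangent space, ensuring that the sections in $W$ that vanish at $0_{\mathcal{J}}$ in fact vanish exactly to first order. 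The main obstacle is the converse surjectivity: that every section of $\mathcal{O}(4\Theta)$ vanishing to order $\leq 1$ at $0_{\mathcal{J}}$ is a linear combination of matrix entries. I would close this via a dimension count, comparing $\dim W$ (obtained from the rank of the matrix of entries together with the vanishing profile) against $h^0(\mathcal{J}_{\overline{\eta}}, 4\Theta)-\binom{g+1}{2}$, which is the dimension of the subsystem of $4\Theta$ vanishing to order $\geq 2$ at $0_{\mathcal{J}}$. This is the content of \cite{andersontheta} and is where most of the technical effort lies.
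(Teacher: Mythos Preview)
The paper does not give an independent proof of this theorem: its entire proof is the single line ``See \cite[\S 3]{andersontheta}.'' Your proposal is therefore not competing with an argument in the paper but rather attempting to sketch the content of the cited reference. In that sense you and the paper agree on the approach---both defer to Anderson---and your outline of the three main steps (well-definedness independent of bases, identification of the pullback line bundle as $\mathcal{O}(4\Theta)$ via four theta-type determinants, and the vanishing analysis at the origin) is broadly faithful to Anderson's strategy.

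That said, two points in your sketch are imprecise. First, the phrase ``sections vanishing to order $\leq 1$'' does not describe a linear subspace of $H^{0}(\mathcal{J}_{\overline{\eta}},4\Theta)$, so the ``edited'' system cannot literally be that set; one has to be more careful about what subsystem (or quotient, or explicit complement to the order-$\geq 2$ subspace) is actually meant, and your write-up glosses over this. Second, your dimension count is off: vanishing to order $\geq 2$ at a smooth point of a $g$-dimensional variety imposes $g+1$ linear conditions (value plus first partials), not $\binom{g+1}{2}$, so the formula $h^{0}(4\Theta)-\binom{g+1}{2}$ is not the dimension of the order-$\geq 2$ subspace. Since you yourself note that the surjectivity step ``is the content of \cite{andersontheta} and is where most of the technical effort lies,'' you are in practice citing the same source as the paper, and these slips would need to be cleaned up if you wanted a genuinely self-contained argument.
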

\begin{proof}
    See \cite[\S 3]{andersontheta}.
\end{proof}

%We detail an algorithm below.
%\begin{algorithm}
 %   \caption{Equations for Jacobian} 
  %  \label{algo:jac}
   % \begin{itemize}
    %    \item \textbf{Input:} A smooth, projective curve $C\subset \mathbb{P}^{N}$ of genus $g$ over an algebraically closed field $k$.
     %   \item \textbf{Output:} Equations for the Jacobian $J$ of $C$, embedded inside a projective space $\mathbb{P}^{M}$.
    %\end{itemize}

    %\begin{algorithmic}[1]
    %\STATE Let $w\geq g+2$ be an integer. Pick a divisor $E$ on $C$ of degree $w+g-1$.
    %\STATE
        
    %\end{algorithmic}
%\end{algorithm}

%\begin{definition}[$\Theta$ -- embedding]
    
%\end{definition}

%\begin{proposition}[Relating abeliants to $4\cdot \Theta$ embedding]
    
%\end{proposition}

%\cite{anderson, andersontheta}

%\cite{igusa} specialisation to singular fibre

%\begin{lemma}[Complexity of Brill-Noether]
    
%\end{lemma}

%\MV{recall the Brill-Noether algorithm over function field}
\end{document}